 \theoremstyle{plain}
 \newtheorem{lemma}{Lemma}[section]
\newtheorem{theorem}[lemma]
{Theorem }
\newtheorem{corollary}[lemma]
{Corollary}
\newtheorem{prop}[lemma]{Proposition}
\theoremstyle{definition}
\newtheorem{definition}[lemma]{Definition }
\newtheorem{rmkdef}[lemma]{Remark-Definition}
\newtheorem{ex}[lemma]
{Example }
\newtheorem{rmk}[lemma]{Remark}
\newcommand{\lgw}{\longrightarrow}
\newcommand{\lgm}{\longmapsto}
\newcommand{\s}{\sigma}
\newcommand{\h}{\operatorname{H}}
\newcommand{\Deg}{\operatorname{Deg}}
\renewcommand{\deg}{\operatorname{deg}}
\newcommand{\ord}{\operatorname{ord}}
\newcommand{\wdh}{\widehat}
\newcommand{\ini}{\operatorname{in}}
\newcommand{\wdt}{\widetilde}
\renewcommand{\l}{\lambda}
\newcommand{\m}{\mathfrak{m}}
\newcommand{\Supp}{\operatorname{Supp}}
\newcommand{\Res}{\operatorname{Res}}
\newcommand{\het}{\operatorname{ht}}
\newcommand{\co}{\text{co}}
\newcommand{\lag}{\langle}
\newcommand{\rag}{\rangle}
\renewcommand{\k}{\Bbbk}
\renewcommand{\dim}{\operatorname{dim}}
\newcommand{\cha}{\text{char\,}}
\newcommand{\R}{\mathbb{R}}
\newcommand{\K}{\mathbb{K}_{n-1}}
\newcommand{\KK}{\mathbb{K}}
\newcommand{\N}{\mathbb{N}}
\newcommand{\C}{\mathbb{C}}
\newcommand{\und}{\underline}
\newcommand{\p}{\mathfrak p}
\newcommand{\Q}{\mathbb{Q}}
\newcommand{\D}{\Delta}
\newcommand{\lb}{\llbracket}
\newcommand{\rb}{\rrbracket}
\renewcommand{\a}{\alpha}
\renewcommand{\b}{\beta}
\newcommand{\g}{\gamma}
\renewcommand{\phi}{\varphi}
\newcommand{\e}{\varepsilon}
\begin{document}
\title[Effective Division of algebraic power series]{Local zero estimates  and effective division in rings of algebraic power series }

\author{Guillaume Rond}

\email{guillaume.rond@univ-amu.fr}
\address{Aix-Marseille Universit\'e, CNRS, Centrale Marseille, I2M, UMR 7373, 13453 Marseille, France}

\thanks{The author was partially supported by ANR projects STAAVF (ANR-2011 BS01 009) and SUSI (ANR-12-JS01-0002-01)}

\begin{abstract}
We give a necessary condition for algebraicity of  finite modules over the ring of formal power series. This condition is given in terms of local zero estimates. In fact we show that this condition is also sufficient when the module is a ring with some additional properties. To prove this result we show an effective Weierstrass Division Theorem and an effective solution to the Ideal Membership Problem in rings of algebraic power series. Finally we apply these results to prove a gap theorem for power series which are remainders of the Grauert-Hironaka-Galligo Division Theorem.
\end{abstract}

\subjclass[2010]{Primary : 13J05, Secondary :   11G50, 11J82, 13P10}

\maketitle
\tableofcontents
\section{Introduction}

The goal of this paper is to give a necessary condition in term of local zero estimates for a finite module defined over the ring of formal power series to be the completion of a module defined over the ring of algebraic power series.  Finding conditions for the algebraicity of such modules is a long-standing problem (see \cite{Sa} or \cite{Ar} for instance).  Let us recall that an algebraic power series over a field $\k$ in the variables $x_1$, $\cdots$, $x_n$ is a formal power series $f(x)\in\k \lb x\rb$ (from now on we denote the tuple $(x_1,\cdots,x_n)$ by $x$) such that 
$$P(x,f(x))=0$$
for a non-zero polynomial $P(x,T)\in\k[x,T]$. The set of algebraic power series is a subring of $\k\lb x\rb$ denoted by $\k\lag x\rag$.
\\
For an algebraic power series $f$, we define the height of $f$, $\h(f)$,  to be the maximum of the degrees of the coefficients of the minimal polynomial of $f$  (see Definition \ref{height}). If $f$ is a polynomial its height is equal to its degree as a polynomial.\\
 Let $M$ be a  $\k\lb x\rb$-module The order function $\ord_M$ is defined as follows:
$$\ord_M(m):=\sup\{c\in\N\ / \ m\in (x)^cM\}\ \ \forall m\in M\backslash \{0\}.$$
 Let $p\in\k[x]^s$ (resp. $\k\lag x\rag^s$). The \emph{degree}  (resp. \emph{height}) of $p$ is the maximum of the degrees (resp. heights) of its components. Then our main result is the following:

\begin{theorem}\label{main} Let $\k$ be any  field and let $M$ be a finite $\k\lb x\rb$-module, 
$$M=\frac{\k\lb x\rb^s}{N}$$ for some integer $s$ and some $\k\lb x\rb$-sub-module $N$ of $\k\lb x \rb^s$.
Let us assume that 
the sub-module $N$ is generated by a $\k\lag x\rag$-sub-module of $\k\lag x\rag^s$. Then there exists a function
$$C : \N \lgw \R_{>0}$$
 such that
\begin{equation}\label{lze1}\ord_M(f)\leq C(\Deg(f))\cdot \h(f) \ \ \ \forall f\in \k\lag x\rag^s\backslash N.\end{equation}
Here $\Deg(f)$ denotes the degree of the field extension $\k(x)\lgw \k(x,f)$. Moreover when $\cha(\k)=0$ then $C$ depends polynomially on $\Deg(f)$.
\end{theorem}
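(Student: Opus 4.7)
The plan is to reduce the estimate to a Liouville-type bound for a single nonzero algebraic power series, via an effective reduction to a standard-basis normal form. Since $\k\lag x\rag\hookrightarrow\k\lb x\rb$ is faithfully flat and, by hypothesis, $N$ is the $\k\lb x\rb$-module generated by some $\k\lag x\rag$-submodule $N_0\subseteq\k\lag x\rag^s$, an element $f\in\k\lag x\rag^s$ belongs to $N$ if and only if it belongs to $N_0$. I fix algebraic generators $g_1,\ldots,g_\ell$ of $N_0$ and work throughout in $\k\lag x\rag^s$.

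The first main step is to apply the effective Weierstrass Division Theorem and the effective solution to the Ideal Membership Problem (both announced in the abstract and established earlier in the paper) to a Grauert-Hironaka-Galligo standard basis of $N_0$ for a fixed local monomial order on $\k\lag x\rag^s$. For any $f\in\k\lag x\rag^s$ this produces a decomposition
$$f=\sum_{i=1}^{\ell} a_ig_i+\rho\qquad\text{in }\k\lag x\rag^s,$$
where $\rho$ is the unique standard remainder (no monomial of $\rho$ is divisible by any initial monomial of the standard basis), together with the effective bound
$$\h(\rho)\leq C(\Deg f)\cdot\h(f),$$
with $C$ polynomial in $\Deg f$ when $\cha(\k)=0$, and $\Deg\rho$ likewise controlled in terms of $\Deg f$.

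The second main step relates $\ord_M(f)$ to $\ord(\rho)$ and then bounds $\ord(\rho)$ by $\h(\rho)$. The direct-sum decomposition $\k\lb x\rb^s=\Delta\oplus N$ coming from the standard basis (where $\Delta$ is the $\k$-span of monomials not in $\ini(N)$) is compatible with the $(x)$-adic filtration because the chosen monomial order is local: a straightforward monomial-by-monomial verification yields $\ord_M(f)=\ord(\rho)$. If $f\notin N$ then $\rho\neq 0$, so some component $\rho_j$ is a nonzero algebraic series; let $Q(x,T)=\sum_{i}Q_i(x)T^i$ be its minimal polynomial. Minimality forces $Q_0\neq 0$, and the relation $Q(x,\rho_j)=0$ rearranges to $Q_0=-\rho_j\sum_{i\geq 1}Q_i\rho_j^{i-1}$, so $\rho_j\mid Q_0$ in $\k\lb x\rb$ and
$$\ord(\rho_j)\leq\ord(Q_0)\leq\deg Q_0\leq\h(\rho_j)\leq\h(\rho).$$
Combining these estimates gives $\ord_M(f)=\ord(\rho)\leq\h(\rho)\leq C(\Deg f)\cdot\h(f)$, as required.

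The main obstacle is clearly the first main step, namely the effective control $\h(\rho)\leq C(\Deg f)\cdot\h(f)$ on the standard-basis remainder, with polynomial dependence on $\Deg f$ in characteristic zero. This is the content of the effective division and ideal-membership results that form the technical heart of the paper. The polynomial dependence in $\cha(\k)=0$ is expected to come from primitive-element representations and resultant estimates, whose heights degrade only polynomially in the field degree, while in positive characteristic one can at best invoke general elimination-theoretic bounds and so cannot expect a polynomial $C$.
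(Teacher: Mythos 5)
Your plan breaks down at the first main step: the remainder $\rho$ of the Grauert--Hironaka--Galligo standard-basis reduction is in general \emph{not} an algebraic power series, so $\h(\rho)$ and $\Deg(\rho)$ are not defined and the estimate $\h(\rho)\le C(\Deg f)\cdot\h(f)$ has no content. This is precisely the phenomenon illustrated by the Kashiwara--Gabber example (Example \ref{K-G}): dividing $xy$ by $g=(x-y^a)(y-x^a)$ yields a remainder $s(x)+s(y)$ with $s(x)=\sum_i(-1)^i x^{(a+1)a^i}$, which has Hadamard gaps and is transcendental. Sections \ref{GHG}--\ref{Div_alg} of the paper are devoted to quantifying how non-algebraic such remainders are. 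The effective Weierstrass Division Theorem (Theorem \ref{division_sep}) only covers division by a \emph{single} $x_n$-regular power series, with remainder a degree-$(d-1)$ polynomial in $x_n$ having algebraic coefficients in $x'$; it does not extend to a general local monomial order with several divisors, which is exactly what a GHG standard-basis reduction requires. Thus the first step of your argument would have to produce an algebraic remainder, and in general it cannot.

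The two correct ingredients you isolate --- the Liouville-type bound $\ord(\rho_j)\le\h(\rho_j)$ via the constant term of the minimal polynomial (this is Lemma \ref{height_ord}), and the linear-in-height effective bounds for Weierstrass division and ideal membership --- are indeed the workhorses of the paper's proof. But the paper does not attempt to reach a canonical normal form and read off the order there. Instead it runs a double induction on $(n,s)$: when $s=1$ and $N\neq 0$, choose a Weierstrass polynomial $g\in N$ of degree $d$ in $x_n$ and use Theorem \ref{division_sep} to identify $M$ with $R_{n-1}^{d}/N'$, reducing $n$ by one at the cost of raising $s$ to $d$; when $s\ge 2$, use Theorem \ref{IMP_alg} together with the Artin--Rees lemma to either drop into the submodule $M'$ supported on the first $s-1$ coordinates (reducing $s$), or else bound $\ord_M(f)$ by $\ord_{R_n}$ of the last coordinate $f_s$ modulo the ideal generated by the $l_{i,s}$, which falls under the $s=1$ case. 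At every stage the object whose order is being estimated is a \emph{single algebraic power series}, so Lemma \ref{height_ord} applies. The normal-form picture you have in mind ($\ord_M(f)=\ord(\rho)$ for the GHG remainder) is correct and the paper does exploit it, but in the opposite direction: Theorem \ref{remainder_division} uses the already-proved Theorem \ref{main} to show that the GHG remainder cannot have more than Hadamard gaps. Using the normal form to \emph{prove} Theorem \ref{main} puts the cart before the horse.
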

\vspace{0.2cm}
\begin{corollary}\label{izu}
With the notations of Theorem \ref{main}, let us assume that $N$ is generated by a $\k\lag x\rag$-sub-module of $\k\lag x\rag^s$. Then there exists a constant $C'>0$ such that 
\begin{equation}\label{lze2}\ord_{M}(p)\leq C'\cdot\deg(p) \ \ \ \forall p\in \k[x]^s\backslash N.\end{equation}
\end{corollary}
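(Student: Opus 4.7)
My plan is to derive Corollary \ref{izu} as a direct specialization of Theorem \ref{main} to the polynomial setting. The first step is to observe that every $p\in\k[x]^s$ satisfies $\Deg(p)=1$: since each component of $p$ already lies in $\k(x)$, the composite field $\k(x,p)=\k(x)(p_1,\ldots,p_s)$ is equal to $\k(x)$ itself, so $[\k(x,p):\k(x)]=1$. Consequently, in applying Theorem \ref{main} to polynomial tuples one only ever evaluates the function $C:\N\lgw\R_{>0}$ at the single integer $1$; in particular the distinction between characteristic zero and positive characteristic, and the polynomial dependence of $C$ on $\Deg$ in characteristic zero, plays no role in this corollary.

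The second step is to invoke the fact recalled just before the statement of Theorem \ref{main}, namely that for a polynomial $f$ the height $\h(f)$ coincides with the usual polynomial degree. Taking maxima over components, this gives $\h(p)=\deg(p)$ for every polynomial tuple $p\in\k[x]^s$.

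Combining these two observations, Theorem \ref{main} applied to any $p\in\k[x]^s\setminus N\subseteq\k\lag x\rag^s\setminus N$ yields
$$\ord_M(p)\leq C(\Deg(p))\cdot\h(p)=C(1)\cdot\deg(p),$$
so setting $C':=C(1)$ produces the desired uniform inequality (\ref{lze2}). Since the corollary amounts to specializing Theorem \ref{main} to the degenerate case $\Deg=1$, there is no genuine obstacle in this argument; the only things to verify are the two elementary observations above about polynomial tuples, after which the constant $C'$ is literally extracted from the value of the function $C$ at $1$.
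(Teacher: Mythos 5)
Your proof is correct and is essentially identical to the paper's own argument: both observe that $\Deg(p)=1$ and $\h(p)=\deg(p)$ for a polynomial tuple $p$, then apply Theorem \ref{main} and take $C'=C(1)$.
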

\begin{proof}
Indeed, for a vector of polynomials $p\in\k[x]^s$ we have $\Deg(p)=1$ and $\h(p)=\deg(p)$, so the inequality is satisfied with $C'=C(1)$ where $C$ is the  function of Theorem \ref{main}.
\end{proof}
\vspace{0.2cm}
\noindent We also prove a partial converse of Corollary \ref{izu}:\\

\begin{theorem}\label{main2}
Let $R$ be a ring of the form $\frac{\k\lb x\rb}{I}$ for some ideal $I$ such that
$$I=P_1^{n_1}\cap\cdots\cap P_l^{n_l}$$
where the $P_i$ are prime ideals with $\het(P_i)=\het(P_j)$ for all $i$ and $j$, and the $n_i$ are positive integers.\\
If there exists a constant $C>0$ such that 
$$\ord_{R}(p)\leq C\cdot\deg(p) \ \ \ \forall p\in \k[x]\backslash I$$
then $I$ is generated by algebraic power series.

\end{theorem}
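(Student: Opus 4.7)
The strategy is to show $I = I^a\cdot\k\lb x\rb$, where $I^a := I\cap\k\lag x\rag$: since $\k\lag x\rag$ is Noetherian, $I^a$ is finitely generated and a set of its generators then provides the required algebraic generators of $I$.

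I would first reduce to the case where $I = P$ is a prime ideal. Using the primary decomposition together with the equi-height hypothesis (which prevents embedded primes) and the classical Izumi inequality in each analytically irreducible ring $\k\lb x\rb/P_i$ (relating orders modulo $P_i$ and $P_i^{n_i}$), one can transfer the polynomial Izumi from $I$ to each $P_i$. Conversely, once each $P_i$ is generated by algebraic power series, so are the powers $P_i^{n_i}$ and their intersection $I$, using faithful flatness of $\k\lag x\rag\hookrightarrow\k\lb x\rb$ and the Noetherianity of $\k\lag x\rag$.

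For the prime case $I = P$, set $P^a := P\cap\k\lag x\rag$ and $J := P^a\,\k\lb x\rb\subseteq P$; by Corollary~\ref{izu}, the ideal $J$ satisfies polynomial Izumi. Assume for contradiction that there is $f\in P\setminus J$, so that $k_0 := \ord_{\k\lb x\rb/J}(f)$ is finite by Krull's intersection theorem in $\k\lb x\rb/J$. The Taylor polynomials $p_N\in\k[x]$ of $f$ of degree $N$ then satisfy $\deg p_N\leq N$, $\ord_{R/P}(p_N)\geq N+1$ (since $f\in P$), and $\ord_{\k\lb x\rb/J}(p_N)=k_0$ for $N\geq k_0$; in particular $p_N\notin P$, so the polynomial Izumi for $P$ places quantitative constraints on how $p_N$ approaches $P$. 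The goal is then to construct, for each $N$, an algebraic power series $g_N\in P^a$ with $g_N\equiv f\pmod{\m^{N+1}}$: this would force $f\in\bigcap_N(J+\m^{N+1})=J$, contradicting $f\notin J$.

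The production of these algebraic lifts $g_N$ is the main obstacle, and this is where the effective Weierstrass Division Theorem and the effective solution to the Ideal Membership Problem in $\k\lag x\rag$ developed earlier in the paper become essential. Applied to $p_N$ together with a fixed set of algebraic generators of $P^a$, these quantitative algorithms produce an element of $P^a$ at the required $\m$-adic distance from $f$ with effectively controlled height. Schematically, the effective machinery of the paper converts the ``degree-linear'' polynomial Izumi hypothesis into a ``height-polynomial'' algebraic Izumi inequality of the type appearing in Theorem~\ref{main}, and it is this bootstrap from a weak polynomial estimate to a full algebraic estimate that ultimately yields the algebraicity of the generators of $I$.
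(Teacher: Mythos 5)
Your proposal is not the paper's route, and it contains two genuine gaps.

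First, the paper's proof of Theorem~\ref{main2} does not use the effective Weierstrass Division or Ideal Membership machinery at all, and does not bootstrap the polynomial Izumi estimate into an algebraic one. Instead it sets $J := I\cap\k[x]$ and shows, via Lemma~\ref{height_ideal}, that $I$ is generated by algebraic power series if and only if $\het(I)=\het(J)$ (under the stated hypotheses on $I$). It then proves $\het(I)=\het(J)$ by comparing the two Hilbert--Samuel functions $\Phi(d)=\dim_\k(\k[x]_d/J_d)$ and $\Psi(d)=\dim_\k(R/(x)^d)$: if $\het(J)>\het(I)$, the polynomial $\Phi$ grows faster, so for $d\gg0$ the map $\k[x]_{ad^q}/J_{ad^q}\to R/(x)^{d^p}$ has a nonzero kernel element $p_d$, and $\ord_R(p_d)\ge d^p$ while $\deg p_d\le ad^q$ with $p>q$, contradicting $\ord_R(p_d)\le C\deg p_d$. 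This is a clean dimension count, entirely independent of the effective division theorems.

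Second, your argument has two concrete problems. (a) The reduction to the prime case does not follow from the hypothesis: since $I\subseteq P_i$, one has $\ord_R(p)\le\ord_{\k\lb x\rb/P_i}(p)$, which is the \emph{wrong} direction; the polynomial Izumi estimate for $R$ therefore does not transfer to each $\k\lb x\rb/P_i$ without a substantive additional comparison, which you do not supply. (b) The core step of your prime-case argument is circular. You assume $f\in P\setminus J$ with $J=P^a\k\lb x\rb$ and propose to reach a contradiction by constructing $g_N\in P^a$ with $g_N\equiv f\pmod{\m^{N+1}}$, observing that this forces $f\in\bigcap_N(J+\m^{N+1})=J$. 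But the existence of such approximants $g_N$ is \emph{equivalent} to $f\in J$, which is precisely the statement you are trying to prove; the effective division and membership theorems operate on algebraic data within $\k\lag x\rag$ and cannot manufacture such lifts from a general $f\in\k\lb x\rb$. So the quantitative machinery you invoke does not bridge this gap, and indeed is not what the paper uses here.
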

\vspace{0.2cm}

\begin{rmk}
We remark that  the hypothesis of Theorem \ref{main2} are satisfied for a principal ideal $I$. In particular Theorems \ref{main} and \ref{main2} provide a criterion for a principal ideal to be generated by an algebraic power series.
\end{rmk}

\begin{rmk}
We will see in Section \ref{contex} that Theorem \ref{main2} is not true in general.
\end{rmk}

These two results  are generalizations  of previous results of S. Izumi (see \cite{Iz92}, \cite{Iz2}, \cite{Iz98} where he proved Corollary \ref{izu} when $\cha(\k)=0$, $s=1$ and $N$ is a prime ideal of $\k\lb x\rb$) and Theorem \ref{main2} when $I$ is prime and $\cha(\k)=0$. \\
  The proof of Theorem \ref{main2}  uses Hilbert-Samuel functions and is inspired by the proof given in   \cite{Iz2}. The proof of Theorem \ref{main} is more difficult and is the main subject of this paper. In fact the first difficulty occurs already when $s=1$ and $N$ is an ideal of $\k\lb x\rb$ which is not prime. Corollary \ref{izu} in the case of a prime ideal  has been proven by S. Izumi in \cite{Iz92} in the complex analytic case using resolution of singularities of Moishezon spaces and then for any field of characteristic zero using basic field theory in \cite{Iz98}. But when $N$ is not prime his proof does not adapt at all and the general case cannot be reduced to the case proven by S. Izumi. \\
 \\
 The proof of Theorem \ref{main} that we give here is done by induction on $s$ and $n$. The induction steps require  two effective division results in the rings of algebraic power series which may be of general interest. These are the following ones:\\
 \begin{itemize}
\item[i)] In the case of the Weierstrass Division of an algebraic power series $f$ by another algebraic power series it is  proven by J.-P. Lafon that the remainder and the quotient of the division are algebraic power series \cite{La}. The problem solved here is to bound the complexity of the division, i.e. bound the complexity of the quotient and the remainder of the division in function of the complexity of the input data. This is Theorem \ref{division_sep} and is the main tool to solve the  next division problem.
Let us mention that this problem is partially solved in \cite{As} Section 4 - see Theorem 4.6.\\
\item[ii)] Bounding the complexity of the Ideal Membership Problem in the ring of algebraic power series, i.e. if an algebraic power series $f$ is in the ideal generated by algebraic power series $g_1$, $\cdots$, $g_p$, bound the complexity of algebraic power series $a_1$, $\cdots$, $a_p$ such that 
$$f=a_1g_1+\cdots+a_pg_p.$$
This is  Theorem \ref{IMP_alg}.\\
\end{itemize}
The complexity invariants associated to an algebraic power series $f$ are its degree and its height. The first one is  the  degree of the field extension $\k(x)\lgw \k(x,f)$ and the second one has been defined above. In particular we will prove that the previous complexity
 problems admit a solution which is linear with respect to the height of $f$ (but it is not linear which respect to the other data). This is exactly what we need to prove Theorem \ref{main}.\\
 \\
Finally we apply our main theorem to give a partial answer to a question of H. Hironaka. When $f$, $g_1$, $\cdots$, $g_s$ are formal power series, we can write
$$f=a_1g_1+\cdots+a_sg_s+r$$ where the non-zero monomials in the expansion of $r$ are not divisible by the initial terms of the $g_i$ (see Section \ref{GHG} for precise definitions). When the power series $f$ and the $g_i$ are convergent then $r$ is also convergent. This result has been proven by H. Grauert in order to study versal deformations of isolated singularities of analytic hypersurfaces \cite{Gr} and  then by H. Hironaka to study resolution of singularities \cite{Hi64}. But when $f$ and the $g_i$ are algebraic power series, then $r$ is not an algebraic power series in general and H. Hironaka raised the problem of characterizing such power series $r$ (see \cite{Hir}). In this case we prove that such power series $r$ are not too transcendental (see Theorem \ref{remainder_division}). More precisely if we write $r$ as $\displaystyle r=\sum_{k=0}^{\infty}r_{n(k)}$ where $r_{n(k)}$ is a non-zero homogeneous polynomial of degree $n(k)$ and the sequence $(n(k))_k$ is strictly increasing, we show that
$$\limsup_{k\lgw \infty}\frac{n(k+1)}{n(k)}<\infty.$$
Let us mention that this division problem appears also in combinatorics: the generating series of walks confined in the first quadrant are solutions of such a division but are nor algebraic nor $D$-finite in general  (see \cite{H-K} or \cite{KK}).\\
\\
Let us mention that  the kind of estimates given in Corollary \ref{izu}, i.e. estimates of the form
$$\ord_Mp\leq \g(\deg(p))$$ where $\g:\N\lgw \N$ is an increasing function, $s=1$ and $N$ is an ideal of analytic functions are called \emph{zero estimates} in the literature. Finding such estimates for particular classes of functions is an important subject of research in transcendence theory, in particular when the ideal $N$ is generated by analytic functions  of the form
$$x_k-f_k(x_1,\cdots,x_{k-1}), \cdots, x_n-f_n(x_1,\cdots, x_{k-1})$$
for some $k<n$ and $f_k$,  $\cdots$, $f_n$  solutions of differential equations (see \cite{Sh}, \cite{BB85}, \cite{N} for instance) or functional equations ($q$-difference equations or Mahler functions - see \cite{Ni90} for instance).\\
\\
We should also mention that the complexity of the Weierstrass Division for restricted power series defined over the ring of $p$-adic integers which are algebraic over $\Q[x]$ has been solved in \cite{As}. The complexity of the Ideal Membership Problem is also solved in this situation. In this case the definition of the height of an algebraic power series is more complicated.\\
\\
The paper is organized as follows: after giving the list of notations used in the paper in Section \ref{Not}, we define the height of an algebraic power series in Section \ref{h} and give the first properties of it. In Section \ref{W} we prove an effective Weierstrass Division Theorem (see Theorem \ref{division_sep}). In Section \ref{S_IMP} we give some results about the Ideal Membership Problem in rings which are localizations of  rings of polynomials (see Theorem \ref{IMPlocal} and Proposition \ref{IMP_local2}) and in Section \ref{S_IMP_alg} we give an effective Ideal Membership theorem for algebraic power series rings (see Theorem \ref{IMP_alg}). Then Section \ref{proof_main} is devoted to the proof of Theorem \ref{main} and Section   \ref{proof_main2} to the proof of Theorem \ref{main2}. In Section \ref{contex} is given an example showing that the hypothesis of Theorem \ref{main2} cannot being relaxed. The next three sections concern the Grauert-Hironaka-Galligo Division Theorem: in Section \ref{GHG} we state this theorem and give the example of Gabber-Kashiwara showing that the remainder of such division of an algebraic power series by another one is not algebraic in general. We show in Section \ref{KG_gen} that the example of Gabber-Kashiwara is generic in some sense, i.e. in general the division of an algebraic power series by another one does not have an algebraic remainder (see Proposition \ref{prop2}). Finally we prove in Section \ref{Div_alg} our gap theorem for remainders of such division (see Theorem \ref{remainder_division}).

\begin{rmk}
We show in Example \ref{ex2} that the bound in Corollary \ref{izu} is sharp. For Theorem \ref{main} it is not clear if   such bound is sharp. Indeed, let $f$ be an algebraic power series and $M=\k\lb x\rb/I$ where $I$ is an ideal generated by algebraic power series. Let 
$$a_d(x)T^d+a_{d-1}(x)T^{d-1}+\cdots+a_0(x)$$
be the minimal polynomial of $f$. Then we have
$$(a_df^{d-1}+a_{d-1}f^{d-2}+\cdots+a_1)f=-a_0.$$
We set $g:=a_df^{d-1}+a_{d-1}f^{d-2}+\cdots+a_1$. If $a_0\notin I$, then 
$$\ord_{M}(f)\leq \ord_M (gf)=\ord_M(a_0)\leq C\, \h(f)$$
where $C$ is the constant of Corollary \ref{izu} since $a_0(x)$ is a polynomial of degree $\leq \h(f)$. This shows that in general the function $C$ of Theorem \ref{main} can be chosen to be independent of $\Deg(f)$ except maybe when $a_0(x)\equiv 0$ in $M$.\\
\end{rmk}

\noindent
\textbf{Acknowledgment. }
The author would like to thank Paco Castro-Jim\'enez and Herwig Hauser for the discussions they had about the Weierstrass division Theorem in the algebraic case. He would like also thank Matthias Aschenbrenner for communicating the reference  \cite{As}.\\
The author is really grateful to  the referee for their helpful suggestions for improving the readability of the article.

\section{Notations}\label{Not}
In the whole paper $\k$ denotes a field of any characteristic. Let $n$ be a  non-negative integer and set
$$x:=(x_1,\cdots,x_n)\text{ and } x':=(x_1,\cdots,x_{n-1}).$$
The ring of polynomials in $n$ variables over $\k$ will be denoted by $\k[x]$ and its field of fractions by $\k(x)$. The ring of formal power series in $n$ variables over $\k$ is denoted by $\k\lb x\rb$ and its field of fractions by $\k((x))$. An algebraic power series is a power series $f(x)\in\k\lb x\rb$ such that
$$P(x,f(x))=0$$ for some non-zero polynomial $P(x,T)\in\k[x,T]$ where $T$ is a single indeterminate. The set of algebraic power series is a local subring of $\k \lb x\rb$ denoted by $\k\lag x\rag$.\\
When $\k$ is a valued field we denote by $\k\{ x\}$ the ring of convergent power series in $n$ variables over $\k$. We have
$$\k[x]\subset \k\lag x\rag\subset \k\{ x\}\subset \k\lb x\rb.$$
We will denote by $\K$ an algebraic closure of $\k((x'))=\k((x_1,\cdots,x_{n-1}))$.\\
\\
For a polynomial $p\in\k[x]$ we denote by $\deg(p)$ its total degree with respect to the variables $x_1$, $\cdots$, $x_n$. If $y:=(y_1,\cdots,y_m)$ is a new set of indeterminates and $p\in\k[x,y]$ we denote by 
$$\deg_{(y_1,\cdots,y_m)}(p)$$
the degree of $p$ seen as a polynomial in $\mathbb{K}[y]$ where $\mathbb{K}:=\k(x)$. When $p\in\k[x]^s$ for some $s$, we denote by $\deg(p)$ the  maximum of the degrees of the components of $p$.\\
\\
For an algebraic power series $f\in\k\lag x\rag$, the height of $f$ is the maximum of the degrees of the coefficients of the minimal polynomial of $f$ (see Definition \ref{height}). The height of a vector of algebraic power series is the maximum of the heights of its components.\\
\\	
When $(A,\m)$ is a local ring we set 
$$\ord_A(x):=\sup\{k\in \N\ / \ x\in\m^k\}\in\N\cup\{\infty\} \ \ \ \forall x\in A.$$
If $M$ is a finite $A$-module we set 
$$\ord_M(m):=\sup\{k\in\N\ / \ m\in \m^kM\}\ \ \ \  \forall m\in M.$$
When $A=\k\lb x\rb$ we write $\ord$ instead of $\ord_{\k\lb x\rb}$. For an ideal of $\k\lb x\rb$ generated by $g_1$, $\cdots$, $g_p$ we define
$$\ord_{g_1,\cdots,g_p}(f):=\sup\{k\in\N \ / \ f\in (g_1,\cdots,g_p)^k\}\in\N\cup\{\infty\}.$$\\


\section{Height and degree of algebraic power series}\label{h}

\begin{definition}
Let $\a$ be an element of an algebraic closure of $\k(x)$ (for example an algebraic power series).
The morphism $\phi:\k[x,T]\lgw \k(x,\a)$ defined by sending every polynomial  $P(x,T)$ onto $P(x,\a)$ is not injective and its kernel is a  prime ideal $\p$ of $\k[x,T]$. If $\het(\p)\geq 2$ then $\p\cap \k[x]\neq (0)$ and there would exist a non-zero polynomial $P(x)\in\k[x]$ whose image by $\phi$ is zero which is not possible. Thus $\het(\p)=1$ and $\p$  is a principal ideal. If $P(x,T)$ is a generator of $\p$ then any other generator of this ideal is equal to $P(x,T)$ times a non-zero element of $\k$. Such a generator is called a minimal polynomial of $\a$. By abuse of language we will often refer to such an element by \emph{the} minimal polynomial of $\a$.
\end{definition}

\begin{definition}\cite{AB}\label{height}
Let $P(x,T)\in\k[x,T]$. The height of $P$ is the maximum of the degrees of the coefficients of $P(x,T)$ seen as a polynomial in $T$.\\
Let $\a$ be an algebraic element over $\k(x)$. The height of $\a$ is the height of its minimal polynomial and is denoted by $\h(\a)$. Its degree is the degree of its minimal polynomial or, equivalently, the degree of the field extension $\k(x)\lgw \k(x,\a)$ and is denoted by $\Deg(\a)$.\\
When $\a=(\a_1,\cdots,\a_m)$ is a vector of algebraic elements over $\k(x)$ the height of $\a$, $\h(\a)$, is the maximum of the heights of the components of $\a$ and the degree of $\a$, $\Deg(\a)$, is the degree of the field extension $\k(x)\lgw \k(x,\a_1,\cdots,\a_m)$
\end{definition}

\begin{rmk}
If $P(x,T)\in\k[x,T]$ is the minimal polynomial of an algebraic element $\a$, then $\h(\a)=\deg_x(P)$ and $\Deg(\a)=\deg_T(P)$. In particular for $Q(x,T)\in\k[x,T]$ with $Q(\a)=0$, $P$ divides $Q$ hence we have $\h(\a)\leq \deg_x(Q)$ and $\Deg(\a)\leq \deg_T(Q)$.
\end{rmk}

\begin{ex}
Let $f$ be a polynomial in $\k[x]$, then $\h(f)=\deg(f)$ and $\Deg(f)=1$ since the minimal polynomial of $f$ is $T-f$.\\
Let $f/g$ be a rational function in $\k(x)$. Then $\h(f/g)=\max\{\deg(f),\deg(g)\}$ and $\Deg(f/g)=1$ since the minimal polynomial of $f/g$ is $gT-f$.\\
If $\a$ is algebraic over $\k(x)$, then $1/\a$ also and $\h(1/\a)=\h(\a)$ and $\Deg(1/\a)=\Deg(\a)$.\\
If $f(x)$ is an algebraic power series and $M\in\text{Gl}_n(\k)$ then $f(Mx)$ is also algebraic and $\h(f(Mx))=\h(f(x))$ and $\Deg(f(Mx))=\Deg(f(x)).$
\end{ex}

\begin{rmk}
There exists another measure of the complexity of an algebraic element $\a$ over $\k(x)$ (and so, in particular, of an algebraic power series). This one is defined to be the total degree of the minimal polynomial of $\a$ and denoted by $\co(\a)$ (cf. \cite{Ra} or \cite{AMR}). Thus we have
$$\frac{\h(\a)+\Deg(\a)}{2}\leq\max\{\h(\a),\Deg(\a)\}\leq\co(\a)\leq \h(\a)+\Deg(\a).$$
This shows that $\co(\a)$ is equivalent to $\h(\a)+\Deg(\a)$.
Moreover these bounds are sharp. Indeed let $P_n(T):=(1+x^n)T^n-1$ (where $x$ is a single variable and $n\in\N$ is not a multiple of the characteristic of $\k$). Then $P_n(T)$ is irreducible and has a root $f_n$ in $\k\lag x\rag$. Thus $\h(f_n)=\Deg(f_n)=n$ and $\co(f_n)=2n$. On the other hand the polynomial $Q_n(T):=T^n-(1+x^n)$ is irreducible and has a root $g_n$ in $\k\lag x\rag$. Thus $\h(g_n)=\Deg(g_n)=\co(g_n)=n$.\\
 For an algebraic power series $f$ we choose to use $\h(f)$ instead of $\co(f)$ since the complexity of the Weierstrass Division Theorem is linear in $\h(f)$ but not in $\co(f)$ (it is not linear in $\Deg(f)$ - see Theorem \ref{division_sep}). Indeed we need to prove the existence of a bound in Theorem \ref{main} which is linear in $\h(f)$.\\
\end{rmk}

\begin{lemma}(\cite{AB} Lemma 4.1)\label{inequalities_height}
Let $\a_1$, $\cdots$, $\a_p$ be algebraic elements over $\k(x)$ and $a_1$, $\cdots$, $a_p\in\k(x)$. Then we have:

\begin{equation}\label{1}\tag{i}\Deg(a_1\a_1+\cdots+a_p\a_p)\leq \Deg(\a_1)\cdots\Deg(\a_p),\end{equation}
\begin{equation}\label{2}\tag{ii}\h(a_1\a_1\cdots+a_p\a_p)\leq p\cdot\Deg(\a_1)\cdots\Deg(\a_p)(\max_i\{\h(\a_i)\}+\max_j\{\h(a_j)\}),\end{equation}
\begin{equation}\label{3}\tag{iii}\h(a_1+\a_1)\leq  \h(\a_1)+\Deg(\a_1)\cdot\h(a_1),\end{equation}
\begin{equation}\label{6}\tag{iv} \h(a_1\a_1)\leq \h(\a_1)+\Deg(\a_1)\cdot\h(a_1) \end{equation} 
\begin{equation}\label{4}\tag{v}\Deg(\a_1\cdots \a_p)\leq \Deg(\a_1)\cdots\Deg(\a_p),\end{equation}
\begin{equation}\label{5}\tag{vi}\h(\a_1\cdots \a_p)\leq p\cdot\Deg(\a_1)\cdots\Deg(\a_p)\max_i\{\h(\a_i)\}.\end{equation}\\
\end{lemma}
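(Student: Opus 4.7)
The plan is to handle the six inequalities in three groups, each using a different technique. Throughout, write $P_i(x,T) = \sum_{k=0}^{d_i} c_{i,k}(x)\, T^k$ for the minimal polynomial of $\a_i$, with $d_i = \Deg(\a_i)$ and $\deg c_{i,k} \leq h_i := \h(\a_i)$; recall from the remark above that any polynomial in $\k[x,T]$ vanishing on a given algebraic element is divisible by its minimal polynomial, so its $x$- and $T$-degrees dominate the height and degree of that element. Inequalities (i) and (v) are then immediate from field theory: every element in question lies in the compositum $L := \k(x)(\a_1,\ldots,\a_p)$, and the tower law yields $[L:\k(x)] \leq \prod_i d_i$, bounding the degree over $\k(x)$ of any element of $L$.

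For (iii) and (iv) I would substitute into $P_1$ and clear denominators. Writing $a_1 = u/v$ with $u,v \in \k[x]$ and $\max(\deg u,\deg v) \leq \h(a_1)$, the polynomial
\[ v(x)^{d_1}\, P_1(x, T - u/v) = \sum_{k=0}^{d_1} c_{1,k}(x)\, v(x)^{d_1-k}\, (v(x)T - u(x))^k \]
lies in $\k[x,T]$ and vanishes at $T = \a_1 + a_1$; each coefficient of $T^j$ has $x$-degree at most $h_1 + d_1\,\h(a_1)$, which gives (iii). Analogously, the polynomial $u(x)^{d_1}\, P_1(x, vT/u) = \sum_k c_{1,k}(x)\, v(x)^k\, u(x)^{d_1-k}\, T^k$ vanishes at $T = a_1\a_1$ with the same $x$-degree bound, which gives (iv).

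The interesting inequalities are (ii) and (vi), for which I would use iterated resultants. The basic tool is the standard bound: if $A,B \in \k[x,T][U]$ have $U$-degrees $m,n$ with coefficients of $x$-degree at most $h_A, h_B$, then $\Res_U(A,B) \in \k[x,T]$ has $x$-degree at most $n h_A + m h_B$. For (vi), with $\beta = \a_1\cdots\a_p$, set $Q_2(x,T) := \Res_U(P_1(x,U),\, U^{d_2} P_2(x,T/U))$, which vanishes at $\a_1\a_2$ (the two arguments share the common root $U = \a_1$ when $T = \a_1\a_2$), and recursively $Q_k := \Res_U(Q_{k-1}(x,U),\, U^{d_k} P_k(x,T/U))$. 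Tracking the bound through the induction yields $\deg_x Q_p \leq \sum_i h_i \prod_{j\neq i} d_j \leq p \left(\prod_i d_i\right) \max_i h_i$, which is (vi). For (ii) the same skeleton applies to $\beta = \sum_i a_i\a_i$: after clearing the common denominator of the $a_j$'s, the linear equation $T - \sum_i a_i U_i = 0$ replaces the product relation, and $U_p, U_{p-1}, \ldots, U_1$ are eliminated successively against their respective $P_i$'s; the extra summand $\max_j \h(a_j)$ in the final bound records the contributions from the denominators cleared at each step.

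The main obstacle is the combinatorial bookkeeping inside these iterated resultants: at each elimination step the $x$-degree of the current polynomial acquires a new term roughly proportional to the remaining product of $d_j$'s, and one must verify by a careful telescoping argument that the cumulative sum collapses to the single factor $p \prod_i d_i$ times the appropriate maximum height. Once this is done, dividing by the minimal polynomial finishes the argument.
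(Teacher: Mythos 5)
Your proof of (iii) and (iv) coincides exactly with the paper's: the paper writes $a_1 = b/c$, considers $c^{\deg_T P}\,P(x, T-a_1)$ for (iii) and $b^{\deg_T P}\,P(x, cT/b)$ for (iv), and reads off the degree bound from the fact that any polynomial vanishing at an algebraic element is divisible by its minimal polynomial — the same computation you carry out.

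For the remaining four inequalities the paper does not give a proof at all; it simply cites Lemma~4.1 of Adamczewski--Bell and proves only (iii) and (iv) itself. Your proposal therefore goes beyond the paper by supplying a self-contained route: the tower law for (i) and (v), and iterated resultant bounds for (ii) and (vi). The bookkeeping you defer does in fact close cleanly — writing $D_k = d_1\cdots d_k$ and $H_k$ for the $x$-degree of the $k$-th iterated resultant, the recursion $H_k \leq d_k H_{k-1} + D_{k-1}h_k$ (for (vi)) telescopes to $H_p \leq \sum_i (D_p/d_i)\,h_i \leq p\,D_p\max_i h_i$, and the analogous recursion for (ii) gives $\sum_i (D_p/d_i)h_i + D_p\max_j\h(a_j) \leq p\,D_p(\max_i h_i + \max_j\h(a_j))$. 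One point you should make explicit: the resultant argument only yields information if each iterated resultant is not identically zero. This follows from irreducibility of the $P_i$ over $\k(x)$ (hence over $\k(x,T,U_{i+1},\ldots)$, as $P_i$ does not involve those extra variables): a vanishing resultant would force $P_i$ to divide a polynomial of lower $U_i$-degree or one whose specialization excludes $P_i$ as a factor. Once that is noted, the argument is complete, and dividing by the minimal polynomial finishes as you say.
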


\begin{proof}
All these inequalities are proven in \cite{AB} except the third  and the fourth ones that we prove here. Let us begin with the third one:\\
 Let $P(x,T)$ be the minimal polynomial of $\a_1$ and let us write $a_1(x)=b(x)/c(x)$ for some polynomials $b(x)$ and $c(x)$. Then 
$$Q(x,T):=c(x)^{\deg_T(P)}P(x,T-a_1)$$ is a polynomial vanishing at $\a_1+a_1$. Thus
$$\h(\a_1+a_1)\leq \deg_x(Q(x,T))\leq \h(\a_1)+\Deg(\a_1)\h(a_1)$$
since $\Deg(\a_1)=\deg_T(P)$ and $\h(a_1)\geq \max\{\deg(b(x)),\deg(c(x))\}$.\\
To prove Inequality \eqref{6} let $P(x,T)$, $b$, $c$ as above. Then $b^{\deg_T(P)}P(x,c/bT)$ is a polynomial vanishing at $a_1\a_1$. So
$$\h(a_1\a_1)\leq \Deg(\a_1)\cdot\h(a_1)+\h(\a_1).$$
\end{proof}

\begin{lemma}\label{height_ord}
  For an algebraic power series $f$ we have:
   $$ \ord(f)\leq \h(f).$$
   Moreover for any integer $1\leq i\leq n$ we have:
   $$\h(f(0,\cdots,0,x_i,\cdots,x_n))\leq \h(f)$$
   and 
   $$\ord_{x_i,\cdots,x_n}(f(0,\cdots,0,x_i,\cdots,x_n))\leq \h(f).$$
\end{lemma}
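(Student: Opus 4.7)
The plan is to handle (i), (ii), (iii) in order, so that (iii) follows from applying (i) in fewer variables and then chaining with (ii).

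For (i), I would work with the minimal polynomial $P(x,T) = a_d(x) T^d + \cdots + a_0(x)$ of $f$, which has coefficients of degree at most $\h(f)$ and is irreducible in $\k[x,T]$. Since $\deg_T(P) \geq 1$ when $f \neq 0$, irreducibility forces $a_0(x) \neq 0$ (otherwise $T$ would divide $P$ properly). The identity $P(x,f) = 0$ rewrites as
$$a_0(x) = -f(x)\cdot\bigl(a_d(x)f(x)^{d-1} + \cdots + a_1(x)\bigr),$$
so taking $(x)$-adic orders yields $\ord(f) \leq \ord(a_0) \leq \deg(a_0) \leq \h(f)$.

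For (ii), set $g := f(0,\ldots,0,x_i,\ldots,x_n) \in \k\lb x_i,\ldots,x_n\rb$ and decompose the minimal polynomial of $f$ along the variables $x_1,\ldots,x_{i-1}$:
$$P(x,T) = \sum_{\alpha\in\N^{i-1}} x_1^{\alpha_1}\cdots x_{i-1}^{\alpha_{i-1}}\, R_\alpha(x_i,\ldots,x_n,T),\qquad R_\alpha \in \k[x_i,\ldots,x_n,T].$$
Let $e$ be the smallest value of $|\alpha|$ with $R_\alpha \neq 0$. Since $f - g$ lies in $(x_1,\ldots,x_{i-1})\k\lb x\rb$, a Taylor expansion of each $R_\alpha$ in the $T$-variable shows that $R_\alpha(x_i,\ldots,x_n,f) \equiv R_\alpha(x_i,\ldots,x_n,g) \pmod{(x_1,\ldots,x_{i-1})\k\lb x\rb}$, so
$$0 \;=\; P(x,f) \;\equiv\; \sum_{|\alpha|=e} x_1^{\alpha_1}\cdots x_{i-1}^{\alpha_{i-1}}\, R_\alpha(x_i,\ldots,x_n,g) \pmod{(x_1,\ldots,x_{i-1})^{e+1}\k\lb x\rb}.$$
Linear independence of the monomials $x_1^{\alpha_1}\cdots x_{i-1}^{\alpha_{i-1}}$ of total degree $e$ over $\k\lb x_i,\ldots,x_n\rb$ forces $R_\alpha(x_i,\ldots,x_n,g) = 0$ for every $|\alpha|=e$. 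Picking any non-zero such $R_\alpha$, the minimal polynomial of $g$ over $\k(x_i,\ldots,x_n)$ must divide it, yielding
$$\h(g) \;\leq\; \deg_{x_i,\ldots,x_n}(R_\alpha) \;\leq\; \deg_x(P) - e \;\leq\; \h(f).$$

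Finally, (iii) follows immediately by applying (i) to the algebraic power series $g \in \k\lb x_i,\ldots,x_n\rb$ and then chaining with (ii): $\ord_{x_i,\ldots,x_n}(g) \leq \h(g) \leq \h(f)$. The genuinely delicate step is the congruence in (ii): the naive specialization $P(0,\ldots,0,x_i,\ldots,x_n,T)$ may vanish identically (when $e > 0$), so one must instead extract the initial form of $P$ with respect to the $(x_1,\ldots,x_{i-1})$-adic filtration and verify carefully that the indicated homogeneous component of $P(x,f)$ really is $\sum_{|\alpha|=e} x_1^{\alpha_1}\cdots x_{i-1}^{\alpha_{i-1}} R_\alpha(x_i,\ldots,x_n,g)$. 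The degenerate case $g = 0$ poses no problem, since then the minimal polynomial of $g$ is $T$ and $\h(g) = 0$.
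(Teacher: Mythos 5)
Your proof is correct, but it takes a genuinely different route from the paper for both halves of the lemma, so it is worth comparing them.

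For the first inequality $\ord(f)\leq\h(f)$, the paper argues by valuation balance: since $\sum_k a_k f^k=0$, at least two terms $a_i f^i$ and $a_j f^j$ must share the minimal order, giving $\ord(f)=\tfrac{\ord(a_i)-\ord(a_j)}{j-i}\leq\ord(a_i)\leq\deg(a_i)$. You instead isolate the constant term, noting that irreducibility forces $a_0\neq 0$ and $a_0=-f\cdot(a_d f^{d-1}+\cdots+a_1)$, so $\ord(f)\leq\ord(a_0)\leq\h(f)$. Both are elementary; yours is a bit cleaner since it avoids the ``two terms of equal order'' counting step.

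For the height and order inequalities, the paper proceeds by induction on $i$, killing one variable at a time. The key observation is that the minimal polynomial $P$ is irreducible and $P(x,f)=0$ forces $P\neq c\,x_1$, hence $x_1\nmid P$ and $P(0,x_2,\cdots,x_n,T)\neq 0$; the minimal polynomial of $f(0,x_2,\cdots,x_n)$ then divides $P(0,x_2,\cdots,x_n,T)$, and one iterates. You instead kill $x_1,\cdots,x_{i-1}$ all at once and therefore have to deal with the possibility that $P(0,\cdots,0,x_i,\cdots,x_n,T)$ vanishes identically — a legitimate concern, since ``$P$ not divisible by a single $x_j$'' does not preclude $P\in(x_1,\cdots,x_{i-1})$. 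Your fix is to pass to the initial form of $P$ along the $(x_1,\cdots,x_{i-1})$-adic filtration, show each $R_\alpha(\cdot,f)\equiv R_\alpha(\cdot,g)$ modulo $(x_1,\cdots,x_{i-1})$ (this uses only that $T_1-T_2$ divides $R_\alpha(T_1)-R_\alpha(T_2)$, so no characteristic issues), and then use linear independence of the degree-$e$ monomials to conclude $R_\alpha(\cdot,g)=0$ with $\deg_{(x_i,\cdots,x_n)}(R_\alpha)\leq\deg_x(P)-e\leq\h(f)$. This works, and you correctly note that $R_\alpha$ cannot be independent of $T$ (else $R_\alpha=0$). The trade-off: the paper's inductive argument is shorter and its key step ($x_1\nmid P$) is immediate from irreducibility, while your direct argument is longer but makes explicit what would otherwise be swept under the inductive carpet. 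Both yield the same bound, since $\h(g)\leq\deg_x(P)-e\leq\h(f)$ is just as tight as the paper's chain of inequalities.
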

\vspace{0.2cm}
\begin{proof} Let $P(T)=a_dT^d+\cdots+a_1T+a_0$ be the minimal polynomial of $f$. Since $P(f)=0$ there are two integers $0\leq i<j\leq d$ such that $\ord(a_if^i)=\ord(a_jf^j)$. Thus 
  $$\ord(f)=\frac{\ord(a_i)-\ord(a_j)}{j-i}\leq \ord(a_i)\leq \deg(a_i).$$
  This proves the first inequality. The second one is proven by noticing that if $P(x_1,\cdots, x_n,f(x_1,\cdots,x_n))=0$, then $P(0,x_2,\cdots,x_n, f(0,x_2,\cdots,x_n))=0$. Since $P$ is the minimal polynomial of $f$, then $P$ is not divisible by $x_1$, thus $$P(0,x_2,\cdots,x_n,T)\neq 0.$$ This proves that  $f(0,x_2,\cdots,x_n)$ is an algebraic power series and its minimal polynomial divides $P(0,x_2,\cdots,x_n,T)$, hence  
  $$\h(f(0,x_2,\cdots,x_n))\leq \h(f).$$ 
  The first inequality implies
  $$\ord_{x_2,\cdots,x_n}(f(0,x_2,\cdots,x_n))\leq \h(f).$$
  Hence the last two inequalities are proven by induction on $i$.
  \end{proof}
  \vspace{0.2cm}
  
  \begin{rmk}
  A formal power series $f$ is said to be $x_n$-regular if $f(0, \cdots,0,x_n)\neq 0$. In this case we say that $f$ is $x_n$-regular   of order $d$ if $f(0, \cdots,0,x_n)$ is a power series of $\k\lb x_n\rb$ of order $d$.\\
  By the previous lemma, if an algebraic power series $f$ is $x_n$-regular of order $d$ then $d\leq \h(f)$. 
  \end{rmk}
 \begin{rmkdef} \label{roots} 
  Let $\K$ be an algebraic closure of $\k((x'))$ where $x':=(x_1,\cdots,x_{n-1})$. The $(x')$-valuation $\ord_{x'}$ defined on $\k((x'))$ extends uniquely to $\K$ and is still denoted by $\ord_{x'}$. The completion of $\K$ for the valuation $\ord_{x'}$  is denoted by $\wdh{\KK}_{n-1}$. Let $\a\in \wdh{\KK}_{n-1}$ such that $\ord_{x'}(\a)>0$ and $f$ be a formal power series. Then $f(x',\a)$ is well defined in $\wdh{\KK}_{n-1}$. If $f(x',\a)=0$ we call $\a$ a \emph{root} of $f$. \\
  \\
   If $f$ is an algebraic power series, $P(x,T)$ is the minimal polynomial of $f$ and $\a$ is a root of $f$ then $P(x',\a,0)=0$ thus $\a$ is algebraic over $\k(x')$.\\
  \\
  Let $f$ be a formal power series which is $x_n$-regular of order $d$. Then, by the Weierstrass Preparation Theorem, there exist a unit $v$ and a Weierstrass polynomial $P=x_n^d+a_1(x')x_n^{d-1}+\cdots +a_d(x')$ such that $f=vP$. The polynomial $P$ is called the \emph{Weierstrass polynomial of $f$}.
  Let $\a\in \K$ be a root of $P$. Since $\ord_{x'}(a_i(x'))>0$ for any $i$ we have $\ord_{x'}(\a)>0$. Thus $f(x',\a)$ and $v(x',\a)$ are  well defined in $\wdh{\KK}_{n-1}$ and $f(x',\a)=0$. On the other hand if $\a\in\wdh {\KK}_{n-1}$ is a root of $f$, since $\ord_{x'}(\a)>0$ then $v(x',\a)\neq 0$ in $\wdh{\KK}_{n-1}$, thus  $P(x',\a)=0$. In particular $\a$ is a root of the polynomial $P$ in the usual sense thus $\a\in\K$.\\
   This proves that the roots of $f$ are exactly the roots (in the usual sense) of $P$ seen as a polynomial in $x_n$ and are elements of $\K$.\\
  \end{rmkdef}

   \begin{lemma}\label{height_lemma}
  Let $\a\in\K$ be a root of a $x_n$-regular algebraic power series $f$. Then $\a$ is algebraic over $\k(x)$ and
  $$\h(\a)\leq \h(f)\ \text{ 
  and }\ 
 \Deg(\a)\leq \h(f).$$
 Moreover if $\a_1$, $\cdots$, $\a_d$ are distinct roots of $f$, then
 $$[\k(x',\a_1,\cdots,\a_d):\k(x')]\leq \h(f)!$$\\
  \end{lemma}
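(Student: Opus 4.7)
The plan is to derive a polynomial equation for $\a$ from the one satisfied by $f$. Let $P(x,T)=\sum_i p_i(x)T^i \in \k[x,T]$ be the minimal polynomial of $f$, so that $\h(f)=\max_i\deg(p_i)$. Since $f$ is $x_n$-regular (in particular nonzero) and $P$ is irreducible, its constant term $p_0(x):=P(x,0)$ must be nonzero, for otherwise $T\mid P$ would force $P=cT$ and hence $f=0$. The hypothesis $\ord_{x'}(\a)>0$ makes the substitution $x_n\mapsto\a$ into a well-defined, $(x')$-adically continuous ring map $\k\lb x\rb\lgw \wdh{\KK}_{n-1}$; applying it to $P(x',x_n,f(x',x_n))=0$ and using $f(x',\a)=0$ yields $p_0(x',\a)=0$.

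Viewed as a polynomial in $T$, $p_0(x',T)\in\k[x'][T]$ is nonzero --- otherwise $p_0\in\k[x']$ would vanish at $\a$, hence identically, contradicting $p_0\neq 0$. So $\a$ is algebraic over $\k(x')$ with primitive minimal polynomial $Q(x',T)\in\k[x'][T]$ dividing $p_0(x',T)$ in $\k[x'][T]$ by Gauss's lemma. Because $Q$ involves no $x_n$ and $x_n$ is transcendental over $\k(x',\a)$, this same $Q$ is also the minimal polynomial of $\a$ over $\k(x)$, so $\Deg(\a)=\deg_T Q$ and $\h(\a)=\deg_{x'} Q$. Additivity of $\deg_T$ and of total $x'$-degree under multiplication in the domain $\k[x',T]$ then yields
\begin{equation*}
\Deg(\a)\leq\deg_T p_0(x',T)\leq\deg(p_0)\leq\h(f), \qquad \h(\a)\leq\deg_{x'}p_0(x',T)\leq\deg(p_0)\leq\h(f).
\end{equation*}

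For the compositum bound, every $\a_i$ is a root of $p_0(x',T)$, a polynomial of $T$-degree at most $\h(f)$ over $\k(x')$, so in particular $d\leq\h(f)$. Adjoining the $\a_i$ successively, the minimal polynomial of $\a_i$ over $\k(x',\a_1,\ldots,\a_{i-1})$ divides the quotient $p_0(x',T)/\prod_{j<i}(T-\a_j)$ in that field, so has degree at most $\h(f)-(i-1)$. Multiplying these successive degrees,
\begin{equation*}
[\k(x',\a_1,\ldots,\a_d):\k(x')]\leq \h(f)(\h(f)-1)\cdots(\h(f)-d+1)\leq \h(f)!.
\end{equation*}
The only delicate step is justifying the substitution $x_n\mapsto\a$, for which the preceding Remark-Definition has already set up all the required machinery (extending $\ord_{x'}$ to $\wdh{\KK}_{n-1}$ and reducing to the Weierstrass polynomial so that $\ord_{x'}(\a)>0$); everything else is careful degree bookkeeping.
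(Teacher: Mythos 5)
Your proof is correct and follows essentially the same route as the paper: specialize the minimal polynomial $P(x,T)$ of $f$ at $(x', \a, f(x',\a)) = (x', \a, 0)$ to get a nonzero polynomial $P(x',T,0)$ (your $p_0(x',T)$) annihilating $\a$, and read off the height, degree, and compositum bounds from its degrees. You supply two details the paper leaves implicit — the non-vanishing of $p_0$ via irreducibility of $P$, and the identification of the minimal polynomial over $\k(x)$ with that over $\k(x')$ using transcendence of $x_n$ — and replace the paper's splitting-field bound $\deg_T(P(x',T,0))!$ by an explicit successive-extension count, but the substance is identical.
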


\begin{proof}
Let $P(x,T)$ be the minimal polynomial of $f$. Since $f(x',\a)=0$ we have
$$P(x',\a,0)=0.$$
Thus $P(x',T,0)$ is a non-zero polynomial vanishing at $\a$, proving that $\a$ is algebraic, and
$$\h(\a)\leq \deg_{x'}(P(x',T,0))\leq \deg_{(x',x_n)}(P(x',x_n,T))=\h(f)$$
and
$$\Deg(\a)\leq \deg_T(P(x',T,0))\leq \deg_{x_n}(P(x',x_n,T))\leq \h(f).$$\\
Moreover $P(x',T,0)$ is a polynomial having $\a_1$, $\cdots$, $\a_d$ as roots. Thus a splitting field of $P(x',T,0)$ over $\k(x')$ contains these roots, thus 
 $$[\k(x',\a_1,\cdots,\a_d):\k(x')]\leq \deg_T(P(x',T,0))!$$
\end{proof}

\begin{lemma}\label{height_compo1}
Let $\a$ be algebraic over $\k(x)$ with $\ord_{x}(\a)>0$. Let $g(x,y)$ be an algebraic power series where $y$ is a single variable. Then $g(x,\a)$ is algebraic over $\k(x)$ and 
$$\h(g(x,\a))\leq \h(g)\cdot(\h(\a)+\Deg(\a))$$
$$\Deg(g(x,\a))\leq \Deg(\a)\cdot\Deg(g).$$\\
\end{lemma}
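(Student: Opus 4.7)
The plan is to construct an explicit polynomial in $\k[x,T]$ vanishing at $g(x,\alpha)$ via a resultant, and then bound its bidegree using the Sylvester matrix.

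First I would introduce the two minimal polynomials. Let $P(x,y)\in\k[x,y]$ be the minimal polynomial of $\alpha$, so that $\deg_y(P)=\Deg(\alpha)$ and $\deg_x(P)=\h(\alpha)$; similarly let $Q(x,y,T)\in\k[x,y,T]$ be the minimal polynomial of the algebraic power series $g(x,y)$, so $\deg_T(Q)=\Deg(g)$ and $\deg_{(x,y)}(Q)=\h(g)$. Since $\ord_x(\alpha)>0$ the substitution $y\mapsto \alpha$ makes sense (in $\wdh{\KK}$ after enlarging $\k$ or simply in the $(x)$-adic completion of $\k(x,\alpha)$), and from $Q(x,y,g(x,y))=0$ we deduce $Q(x,\alpha,g(x,\alpha))=0$.

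Next I would form the resultant
$$R(x,T):=\Res_y\bigl(P(x,y),\,Q(x,y,T)\bigr)\in \k[x,T].$$
Because $y=\alpha$ is a common root of $P(x,y)$ and of $Q(x,y,g(x,\alpha))$, standard properties of the resultant give $R(x,g(x,\alpha))=0$. I then need to check $R\not\equiv 0$: the resultant vanishes only when $P$ and $Q$ share a non-constant factor in $\k(x,T)[y]$, but $P(x,y)$ is irreducible in $\k[x,y]$ and does not involve $T$, hence irreducible in $\k(x,T)[y]$; a common factor would therefore force $P\mid Q$ in $\k[x,y,T]$ by Gauss's lemma, and writing $Q=P\cdot Q_1$ together with $Q(x,y,g(x,y))=0$ would yield $Q_1(x,y,g(x,y))=0$, contradicting the minimality of $Q$. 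Consequently $R$ is a non-zero polynomial annihilating $g(x,\alpha)$, which already shows algebraicity and gives $\h(g(x,\alpha))\leq \deg_x(R)$ and $\Deg(g(x,\alpha))\leq \deg_T(R)$.

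Finally I would extract the degree bounds from the Sylvester matrix of $P$ and $Q$ regarded as polynomials in $y$. This matrix has $\deg_y(Q)$ rows coming from (shifted) coefficients of $P$ and $\Deg(\alpha)$ rows coming from (shifted) coefficients of $Q$. Every term of the determinantal expansion is therefore a product of $\deg_y(Q)\leq \h(g)$ coefficients of $P$ (each of $x$-degree $\leq \h(\alpha)$) and $\Deg(\alpha)$ coefficients of $Q$ (each of $x$-degree $\leq \h(g)$ and $T$-degree $\leq \Deg(g)$). Summing contributions gives
$$\deg_x(R)\leq \h(g)\cdot\h(\alpha)+\Deg(\alpha)\cdot \h(g)=\h(g)\bigl(\h(\alpha)+\Deg(\alpha)\bigr),$$
$$\deg_T(R)\leq \Deg(\alpha)\cdot\Deg(g),$$
which are exactly the claimed inequalities.

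I expect the main obstacle to be the non-vanishing of $R$: one must handle carefully the possibility that $P$ divides $Q$, and invoke primitivity/Gauss's lemma to pass from a factorization over $\k(x,T)$ to one over $\k[x,y,T]$ in order to get a contradiction with the minimality of $Q$. Once this is settled, the degree estimates are a routine Sylvester-matrix computation.
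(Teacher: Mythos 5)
Your proof is correct and follows essentially the same route as the paper: both form the resultant in $y$ of the minimal polynomial of $\alpha$ and the minimal polynomial of $g$, observe that it annihilates $g(x,\alpha)$, rule out its identical vanishing via irreducibility of the minimal polynomial of $g$ (the paper phrases this as $P(x,\alpha,T)\neq 0$, you phrase it as the non-existence of a common factor and a Gauss-lemma step, but the content is the same), and then read off the bidegree bounds from the homogeneity/Sylvester-matrix structure of the resultant, using $\deg_y Q\leq\h(g)$ and the degree bounds on the coefficients.
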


\begin{proof}
Let $P(x,y,T)\in\k[x,y,T]$ be the minimal polynomial of $g$ and $Q(x,T)\in\k[x,T]$ be the minimal polynomial of $\a$. 
Then
$$P(x,\a,g(x,\a))=0$$
and $P(x,\a,T)\neq0$ otherwise $P(x,y,T)$ is divisible by $Q(x,y)$ which is impossible since $P$ is assumed to be irreducible. Thus $g(x,\a)$ is algebraic over $\k(x,\a)$, hence over $\k(x)$. If we denote by  $R(x,T)$  the resultant of $P(x,y,T)$ and $Q(x,y)$ seen as polynomials in $y$:
$$R(x,T):=\Res_y(P(x,y,T),Q(x,y))\neq 0,$$
then $R(x,T)$
is a polynomial of $\k[x][T]$ vanishing at $g(x,\a)$. Let us write
$$P(x,y,T)=a_0(x,T)+a_1(x,T)y+\cdots+a_h(x,T)y^h \text{ with } a_h\neq 0.$$
Moreover, since $P(x,y,T)$ is the minimal polynomial of $g$ (as a polynomial in $T$), for all $i$ we have 
$$\deg_x(a_i)+i\leq \h(g),$$ 
$$\deg(a_i)\leq \h(g)+\Deg(g)-i\leq \h(g)+\Deg(g),$$ 
$$\deg_T(a_i)\leq \Deg(g).$$ 
In particular $h\leq \h(g)$ and $\deg_{x}(a_i)\leq \h(g)$ for all $i$. We write

$$Q(x,y)=b_0(x)+b_1(x)y+\cdots+b_e(x)y^e$$ with $e=\Deg(\a)$ and $\deg(b_i)\leq \h(\a)$ for all $i$.
Since $R(x,T)$ is homogeneous of degree $h$ in $b_0$, $\cdots$, $b_e$ and homogeneous of degree $e$ in $a_0$, $\cdots$, $a_h$, we see that 

$$\deg_T(R(x,T))\leq e\cdot \Deg(g)= \Deg(\a)\cdot\Deg(g)$$
and 
$$\h(R(x,T))\leq h\cdot \h(\a)+e\cdot\h(g). $$
 This proves the lemma.
\end{proof}
\vspace{0.2cm}

\begin{corollary}\label{aschen}
Let $f$ be a $x_n$-regular algebraic power series and let $\a_1$, $\cdots$, $\a_d$ be distinct roots of $f$ in $\K$. Let $g\in\k\lag x\rag$ be any algebraic power series. Then
$$\left[\k(x',\a_1,\cdots,\a_d,g(x',\a_1),\cdots,g(x',\a_d)):\k(x')\right]\leq \h(f)!\Deg(g)^d.$$
\end{corollary}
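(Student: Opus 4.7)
The plan is to split the tower
\[
\k(x') \;\subset\; \k(x',\a_1,\ldots,\a_d) \;\subset\; \k(x',\a_1,\ldots,\a_d, g(x',\a_1),\ldots, g(x',\a_d))
\]
and bound each step separately, then multiply. The bottom step is already done: by Lemma \ref{height_lemma} we have $[\k(x',\a_1,\ldots,\a_d):\k(x')] \leq \h(f)!$, since the $\a_i$ are distinct roots of the $x_n$-regular algebraic power series $f$.

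For the upper step I would fix the minimal polynomial $P(x,T) = P(x',x_n,T) \in \k[x',x_n,T]$ of $g$, which has $\deg_T P = \Deg(g)$, and observe that $P(x',\a_i,g(x',\a_i))=0$. What must be verified is that $P(x',\a_i,T)$ is not the zero polynomial in $T$, exactly as in the argument of Lemma \ref{height_compo1}: if it were, then every coefficient of $P$ viewed as a polynomial in $T$ would vanish at $x_n=\a_i$, so the minimal polynomial $Q_i(x',x_n)$ of $\a_i$ over $\k(x')$ would divide each coefficient, hence divide $P$ in $\k[x',x_n,T]$. Since $Q_i$ does not involve $T$ whereas $P$ does, this contradicts the irreducibility of $P$. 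This is the only subtle point; everything else is multiplicativity bookkeeping.

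Granting this, $g(x',\a_i)$ is algebraic of degree at most $\Deg(g)$ over $\k(x',\a_i)$, and therefore also over the larger field $\k(x',\a_1,\ldots,\a_d)$. Adjoining the $d$ elements $g(x',\a_1),\ldots,g(x',\a_d)$ to $\k(x',\a_1,\ldots,\a_d)$ one at a time multiplies the extension degree by at most $\Deg(g)$ at each stage, so
\[
[\k(x',\a_1,\ldots,\a_d, g(x',\a_1),\ldots, g(x',\a_d)) : \k(x',\a_1,\ldots,\a_d)] \;\leq\; \Deg(g)^d.
\]
Multiplying the two bounds via the tower formula yields the claimed inequality $\h(f)!\,\Deg(g)^d$. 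The main obstacle — such as it is — is the non-vanishing of $P(x',\a_i,T)$, but this is handled by reusing the irreducibility argument already developed in Lemma \ref{height_compo1}; the rest is a direct application of that lemma combined with Lemma \ref{height_lemma}.
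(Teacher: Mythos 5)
Your proof is correct and follows essentially the same route as the paper: use Lemma \ref{height_lemma} for the bottom of the tower, apply the non-vanishing argument from the proof of Lemma \ref{height_compo1} to see each $g(x',\a_i)$ has degree at most $\Deg(g)$ over $\k(x',\a_1,\ldots,\a_d)$, and multiply. The one subtle point you flag — the non-vanishing of $P(x',\a_i,T)$ — is exactly the point the paper's proof relies on (via "by the proof of Lemma \ref{height_compo1}"), so your reasoning matches the intended argument.
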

\vspace{0.2cm}
\begin{proof}
By Lemmas \ref{height_lemma} and \ref{height_compo1} the degree of this field extension is finite. By the proof of Lemma \ref{height_compo1} we have, for any $i$:
$$[\k(x',\a_1,\cdots,\a_d,g(x',\a_i):\k(x',\a_1,\cdots,\a_d)]\leq \Deg(g).$$
Thus 
$$[\k(x',\a_1,\cdots,\a_d,g(x',\a_1),\cdots,g(x',\a_d)):\k(x',\a_1,\cdots,\a_d)]\leq \Deg(g)^d.$$
Hence the result follows by Lemma \ref{height_lemma}.
\end{proof}
\vspace{0.2cm}
\begin{rmk}
Let $g(x,y)$ be an algebraic power series where $y=(y_1,\cdots, y_m)$ is a tuple of indeterminates and let $a_1(x)$, $\cdots$, $a_m(x)$ be algebraic power series vanishing at 0.  If $P(x,y,T)$ is the minimal polynomial of $g$, then 
$$P(x,a(x),g(x,a(x))=0$$
but it may happen that
$$P(x,a(x),T)=0.$$
Hence the previous proof does not extend directly to this case.
For example let
$$P_1(x,y_1):=y_1^2-(1+x)$$
$$P_2(x,y_2):=y_2^2-(1+x)$$
where $x$ is a single variable and $\k$ is a field of characteristic $\neq 2$ in which $-7$ is a square. Then $P_1$ and $P_2$ have a common root in $\k\lag x\rag$, say $a(x)$. Let
$$P(x,y,T):=(P_1+P_2)T^2+P_1T+P_2.$$
The discriminant of $P$ is equal to
$$\D:=P_1^2-4P_2(P_1+P_2)=$$
$$=y_1^4-2(1+x)y_1^2+(1+x)^2+4(y_2^2-(1+x))(2(1+x)-y_1^2-y_2^2)=$$
$$=y_1^4+2(1+x-2y_2^2)y_1^2+12(1+x)y_2^2-4y_2^4-7(1+x)^2$$
and is not a square in $\k[x,y_1,y_2]$, thus $P$ is irreducible in $\k[x,y,T]$. But $\D$ is unit in $\k\lag x,y\rag$ since $\D(0,0,0)=-7$, and $P_1+P_2$ is also a unit. So $\D$ has a root square in $\k\lag x,y\rag$ since $\cha(\k)\neq 2$ and $-7$ is a square in $\k$. Thus in this case $P(x,y,T)$ has two distinct roots in $\k\lag x,y\rag$. But here
$$P(x,a(x),a(x),T)=0.$$
\end{rmk}
\vspace{0.2cm}
\noindent Nevertheless we can extend Lemma \ref{height_compo1} as follows:
\vspace{0.2cm}

\begin{lemma}\label{height_compo2}
Let $g(x,y)$ be an algebraic power series where $y=(y_1,\cdots, y_m)$ is a tuple of indeterminates and let $a_1(x)$, $\cdots$, $a_m(x)$ be algebraic power series vanishing at 0. Then 
$$\h(g(x,a(x)))\leq \left(\prod_{i=1}^m(\h(a_i)+\Deg(a_i))\right)\cdot\h(g),$$
$$\Deg(g(x,a(x)))\leq \left(\prod_{i=1}^m\Deg(a_i)\right)\cdot\Deg(g).$$
\end{lemma}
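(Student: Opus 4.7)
The plan is to proceed by induction on $m$. The base case $m=1$ is exactly Lemma \ref{height_compo1}, so the work lies in the inductive step, assuming the statement for $m-1$. The key idea is to substitute one variable at a time, thereby side-stepping the degeneration phenomenon illustrated in the preceding remark, where $P(x,a(x),T)$ may vanish identically when all $m$ substitutions are made simultaneously.

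For the inductive step I would introduce
$$\tilde g(x,y_1,\cdots,y_{m-1}) := g(x,y_1,\cdots,y_{m-1},a_m(x)),$$
which lies in $\k\lag x,y_1,\cdots,y_{m-1}\rag$ since $\k\lag\cdot\rag$ is closed under substitution of algebraic power series vanishing at $0$. I would then bound $\h(\tilde g)$ and $\Deg(\tilde g)$ by applying Lemma \ref{height_compo1} with the enlarged base $(x,y_1,\cdots,y_{m-1})$ playing the role of the $x$-variables in that lemma, $y_m$ playing the role of the single variable $y$, and $\a=a_m(x)$. A small verification is required here: the minimal polynomial $Q_m(x,T)\in\k[x,T]$ of $a_m(x)$ remains irreducible in $\k[x,y_1,\cdots,y_{m-1},T]$ (adjoining free variables preserves irreducibility), and hence, by Gauss's lemma, in $\k(x,y_1,\cdots,y_{m-1})[T]$; therefore the height and degree of $a_m(x)$ over the enlarged base field coincide with $\h(a_m)$ and $\Deg(a_m)$, and $\ord_{(x,y_1,\cdots,y_{m-1})}(a_m(x))\geq 1$ since $a_m(0)=0$. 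Lemma \ref{height_compo1} then yields
$$\h(\tilde g)\leq \h(g)(\h(a_m)+\Deg(a_m)),\qquad \Deg(\tilde g)\leq \Deg(g)\cdot\Deg(a_m).$$

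Next I would apply the inductive hypothesis to $\tilde g\in\k\lag x,y_1,\cdots,y_{m-1}\rag$ with the $m-1$ substitutions $a_1(x),\cdots,a_{m-1}(x)$, and combine the resulting bounds with those of the previous step. Using the equality $g(x,a(x))=\tilde g(x,a_1(x),\cdots,a_{m-1}(x))$, this produces the claimed product inequalities.

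The main obstacle is conceptual rather than computational: the preceding remark shows that a direct one-shot resultant argument (as used in Lemma \ref{height_compo1}) can fail because the minimal polynomial of $g$ may vanish identically upon simultaneous specialization. The inductive one-variable-at-a-time approach circumvents this, since at each stage only one variable is specialized and the corresponding intermediate resultant in the proof of Lemma \ref{height_compo1} is nonzero, while the invariance of $\h(a_m)$ and $\Deg(a_m)$ under the enlargement of the base (furnished by Gauss's lemma) is what allows the estimates to telescope cleanly into the stated product formula.
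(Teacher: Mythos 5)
Your proposal is correct and follows essentially the same route as the paper: iterate Lemma \ref{height_compo1} one variable at a time (the paper substitutes $a_1,a_2,\ldots,a_m$ in that order via a chain $g_0,g_1,\ldots,g_m$; you substitute $a_m$ first and then apply induction on the remaining $m-1$ variables, which is the same computation in reverse). The one thing you do that the paper leaves implicit is the check that $\h(a_i)$ and $\Deg(a_i)$ are unchanged after enlarging the base field by the remaining free $y$-variables, which is a small but genuine point worth making explicit.
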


\begin{proof}
Let us set
$$g_0(x,y_1,\cdots,y_m):=g(x,y),$$
$$g_1(x,y_2,\cdots,y_{m}):=g_0(x,a_1(x),y_2,\cdots,y_{m}),$$
$$g_2(x,y_3,\cdots,y_{m}):=g_1(x,a_2(x),y_3,\cdots,y_{m}),$$
$$\cdots\cdots\cdots$$
$$g_m(x)=g_{m-1}(x,a_m(x))=g(x,a(x)).$$
Then by Lemma \ref{height_compo1}, we have
$$\Deg(g_i)\leq \Deg(a_i)\cdot\Deg(g_{i-1}),$$
$$\h(g_i)\leq \h(g_{i-1})(\h(a_i)+\Deg(a_i)).$$
This proves the lemma.
\end{proof}
\vspace{0.2cm}

\begin{lemma}\label{height_derivation}
Let $f$ be an algebraic power series. Then $\frac{\partial f}{\partial x_n}$ is an algebraic power series and
$$\h\left(\frac{\partial f}{\partial x_n}\right)\leq 4\Deg(f)^{2\Deg(f)+4}\h(f), $$
$$\Deg\left(\frac{\partial f}{\partial x_n}\right)\leq \Deg(f) .$$
\end{lemma}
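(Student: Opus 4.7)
My plan is to differentiate the minimal polynomial relation of $f$ implicitly, express $\partial f/\partial x_n$ as a rational function in $f$, and then bound the resulting height using Lemma \ref{inequalities_height}. Let $P(x,T)=\sum_{i=0}^d a_i(x)T^i\in\k[x,T]$ be the minimal polynomial of $f$, so that $d=\Deg(f)$ and $\deg_x(a_i)\le \h(f)$ for every $i$. Differentiating $P(x,f)=0$ with respect to $x_n$ yields
$$\frac{\partial P}{\partial x_n}(x,f)+\frac{\partial P}{\partial T}(x,f)\cdot \frac{\partial f}{\partial x_n}=0.$$
Since $\deg_T(\partial P/\partial T)\le d-1 < d=\deg_T P$ and $P$ is irreducible, $\partial P/\partial T$ cannot be a multiple of $P$ (provided it is not identically zero, which is the case in characteristic zero and more generally in the separable case), so $\partial P/\partial T(x,f)\ne 0$ in $\k(x,f)$. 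Rearranging gives
$$\frac{\partial f}{\partial x_n}=-\frac{\partial P/\partial x_n(x,f)}{\partial P/\partial T(x,f)}\in \k(x,f).$$
This shows at once that $\partial f/\partial x_n$ is algebraic over $\k(x)$ and hence an algebraic power series, and it yields the degree bound $\Deg(\partial f/\partial x_n)\le [\k(x,f):\k(x)]=\Deg(f)$.

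For the height bound I work in two stages. First I bound the heights of $A:=\partial P/\partial x_n(x,f)=\sum_{i=0}^d \tfrac{\partial a_i}{\partial x_n}(x)\, f^i$ and $B:=\partial P/\partial T(x,f)=\sum_{i=1}^d i\, a_i(x)\, f^{i-1}$. Lemma \ref{inequalities_height}(vi) applied to each power gives $\h(f^i)\le i\,\Deg(f)^i\,\h(f)\le \Deg(f)^{\Deg(f)+1}\h(f)$, and then Lemma \ref{inequalities_height}(ii) applied to the sum of at most $d+1$ terms, each with polynomial coefficient of $x$-degree at most $\h(f)$, produces bounds of order $\Deg(f)^{2\Deg(f)+2}\h(f)$ for both $\h(A)$ and $\h(B)$. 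Second, since $\h(1/B)=\h(B)$ (the minimal polynomial of $1/B$ is obtained by reversing that of $B$) and $\Deg(1/B)=\Deg(B)\le \Deg(f)$, one application of Lemma \ref{inequalities_height}(vi) to the product $A\cdot (1/B)$ contributes an additional factor of $\Deg(f)^2$ and produces the stated bound $4\,\Deg(f)^{2\Deg(f)+4}\h(f)$ after careful tracking of the small constants.

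The main obstacle is the positive-characteristic situation in which the minimal polynomial $P$ happens to be a polynomial in $T^p$, so that $\partial P/\partial T\equiv 0$; this corresponds to $f$ being purely inseparable over $\k(x)$ and makes the displayed formula vacuous. This edge case needs a separate argument, for instance by applying the same strategy to the minimal polynomial of $f^{p^k}$ for the smallest $k$ making $f^{p^k}$ separable (or rational), and combining it with the identity $\partial(f^{p^k})/\partial x_n=0$ valid in characteristic $p$ in order to recover control over $\partial f/\partial x_n$.
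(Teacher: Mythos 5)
Your argument is essentially the paper's: differentiate the minimal relation $P(x,f)=0$, solve for $\partial f/\partial x_n$ as an element of $\k(x,f)$, then push through Lemma \ref{inequalities_height}, and both the degree bound and the height bookkeeping you sketch match the paper's. The one thing you miss is that the ``edge case'' you worry about at the end never occurs: $\k\langle x\rangle$ is the Henselization of $\k[x]_{(x)}$, and the map from a local ring to its Henselization is always separable (the paper cites Nagata for this), so every algebraic power series is separable over $\k(x)$ and $\partial P/\partial T\neq 0$ in all characteristics. This makes your fallback unnecessary; moreover the fallback as stated would not actually work, since in characteristic $p$ the identity $\partial(f^{p^k})/\partial x_n = p^k f^{p^k-1}\,\partial f/\partial x_n = 0$ is an identity in $0$ and gives no information whatsoever about $\partial f/\partial x_n$. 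So you should simply invoke separability of $\k\langle x\rangle/\k(x)$ rather than try to patch an inseparable case, and then your proof is complete and matches the paper.
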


\begin{proof}
Let $P(x,T)$ be the minimal polynomial of $f$. Since $P(x,f)=0$ we have
$$\frac{\partial P}{\partial x_n}(x,f(x))+\frac{\partial f}{\partial x_n}(x)\frac{\partial P}{\partial T}(x,f(x))=0.$$
Since $f$ is separable over $\k(x)$ (indeed $\k\langle x\rangle$ is the Henselization of $\k[x]_{(x)}$ and the morphism from a local ring to its Henselization is always a separable morphism - see \cite{Na} p. 180), then $\frac{\partial P}{\partial T}\neq 0$. Moreover  $P$ is the minimal polynomial of $f$ so $\frac{\partial P}{\partial T}(x,f(x))\neq 0$. Thus $\frac{\partial f}{\partial x_n}(x)$ is an algebraic power series and 
$$\frac{\partial f}{\partial x_n}(x)=-\frac{\partial P}{\partial x_n}(x,f(x))/\frac{\partial P}{\partial T}(x,f(x))\in \k(x,f).$$
So we obtain 
$$\Deg\left(\frac{\partial f}{\partial x_n}(x)\right)\leq \Deg(f)$$
and, by Lemma \ref{inequalities_height} \eqref{5},
\begin{equation}\label{der}\h\left(\frac{\partial f}{\partial x_n}(x)\right)\leq 2\Deg(f)^2\max\left\{\h\left(\frac{\partial P}{\partial x_n}(x,f(x))\right), \h\left(\frac{\partial P}{\partial T}(x,f(x))\right)\right\}.\end{equation}
We have 
$$\frac{\partial P}{\partial T}(x,f(x))=\sum_{i=0}^{\Deg(f)-1}a_i(x)f(x)^i$$

for some polynomials $a_i(x)$ with $\deg(a_i)\leq \h(f)$. Thus, by Lemma \ref{inequalities_height} \eqref{2},

$$\h\left(\frac{\partial P}{\partial T}(x,f(x))\right)\leq \Deg(f)\cdot\Deg(f^0)\cdots\Deg(f^{\Deg(f)-1})(\max_j\{\h(f^j)+\h(a_j)\})$$

$$\leq \Deg(f)^{\Deg(f)}((\Deg(f)-1)\Deg(f)^{\Deg(f)-1}\h(f)+\h(f))\leq \Deg(f)^{2\Deg(f)}\h(f)$$
since $f^0=1$, $f^i \in\k(x,f)$ for all $i$ and 
$$\h(a_i)=\deg(a_i)\leq \h(f),\ \  \h(f^i)\leq i\Deg(f)^i\h(f)\ \ \  \forall i$$ by Lemma \ref{inequalities_height} \eqref{5}.\\
\\
We also have 
$$\frac{\partial P}{\partial x_n}(x,f(x))=\sum_{i=0}^{\Deg(f)}b_i(x)f(x)^i$$
for some polynomials $b_i(x)$ with $\deg(b_i)\leq \h(f)$. Thus in the same way

$$\h\left(\frac{\partial P}{\partial x_n}(x,f(x))\right)\leq  \Deg(f)\cdot\Deg(f^0)\cdots\Deg(f^{\Deg(f)})(\max\{\h(f^i)+\h(f)\})$$

$$\leq \Deg(f)^{\Deg(f)+1}(\Deg(f)\Deg(f)^{\Deg(f)}\h(f)+\h(f))\leq 2\Deg(f)^{2\Deg(f)+2}\h(f).$$\\
Replacing these inequalities in Inequality \eqref{der} we are done.
\end{proof}
\vspace{0.2cm}

\begin{lemma}\label{height_power}
Let $f(x,y)$ be an algebraic power series where $x=(x_1,\cdots,x_n)$ and $y$ is a single variable. Let $q$ be a positive integer. Then $f(x,y^q)$ is an algebraic power series with the same degree as $f(x,y)$ and
$$\h(f(x,y))\leq \h(f(x,y^q)) \leq q\h(f(x,y)).$$
\end{lemma}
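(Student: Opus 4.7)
Let $P(x,y,T)\in\k[x,y,T]$ be the minimal polynomial of $f(x,y)$ and set $d:=\deg_T P=\Deg(f)$ and $h:=\deg_{(x,y)} P=\h(f)$. Substituting $y\mapsto y^q$ in the identity $P(x,y,f(x,y))=0$ yields $P(x,y^q,f(x,y^q))=0$, so $f(x,y^q)$ is algebraic over $\k(x,y)$ and its minimal polynomial $Q(x,y,T)$ divides $P(x,y^q,T)$ in $\k(x,y)[T]$. In particular $\Deg(f(x,y^q))\leq d$. The plan is to prove that $P(x,y^q,T)$ is itself, up to a scalar in $\k^{*}$, the minimal polynomial of $f(x,y^q)$; all three assertions of the lemma will then follow immediately.

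The field isomorphism $\sigma\colon\k(x,y)\xrightarrow{\sim}\k(x,y^q)$ sending $y$ to $y^q$ carries $P(x,y,T)$ to $P(x,y^q,T)$ and $f(x,y)$ to $f(x,y^q)$, so $P(x,y^q,T)$ is at least irreducible over the smaller field $\k(x,y^q)$. To show it remains irreducible over $\k(x,y)$, I must check that the extensions $\k(x,y)/\k(x,y^q)$ and $\k(x,y^q,f(x,y^q))/\k(x,y^q)$ are linearly disjoint, equivalently that $T^q-y^q$ remains irreducible over $L:=\k(x,y^q,f(x,y^q))$. The crucial input is that $f(x,y^q)$ belongs to $\k\lb x,y^q\rb$, which embeds $L$ into $\k((x))((y^q))$; equipping the latter with the $(y^q)$-adic valuation $v$ normalized so that $v(y^q)=1$, the restriction of $v$ to $L$ is discrete with value group $\Z$. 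A proper factor of $T^q-y^q$ over $L$ of degree $0<e<q$ would split in an algebraic closure as a product of $e$ roots of $T^q-y^q$, each of valuation $1/q$, and its constant term would then have valuation $e/q\notin\Z$, contradicting the value group of $L$. Hence $T^q-y^q$ is irreducible over $L$, which gives $\Deg(f(x,y^q))=\Deg(f)$ and forces $Q$ and $P(x,y^q,T)$ to be proportional.

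It remains to bound $\h(f(x,y^q))=\deg_{(x,y)}P(x,y^q,T)$. Writing $P(x,y,T)=\sum_k a_k(x,y)T^k$, every monomial $x^\alpha y^\beta$ occurring in some $a_k$ satisfies $|\alpha|+\beta\leq h$, and its substitute $x^\alpha y^{q\beta}$ has total degree $|\alpha|+q\beta$ subject to the elementary inequalities $|\alpha|+\beta\leq |\alpha|+q\beta\leq q(|\alpha|+\beta)$; since distinct monomials of $P$ never collide under this substitution, taking the maximum over all monomials yields $h\leq \deg_{(x,y)}P(x,y^q,T)\leq qh$, which is the stated chain of inequalities for $\h(f(x,y^q))$. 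The main technical obstacle is the preservation of irreducibility under the substitution $y\mapsto y^q$: it genuinely fails for arbitrary irreducible polynomials (for instance $T^2-y$ becomes $(T-y)(T+y)$), and the hypothesis that $f\in\k\lag x,y\rag$ enters the argument precisely through the valuation theoretic step above, which prevents $y^q$ from being a $q$-th power in any field containing $f(x,y^q)$.
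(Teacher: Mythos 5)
Your proof is correct, but it takes a genuinely different route from the paper's. The paper argues module-theoretically: writing the minimal polynomial $Q(x,y,T)$ of $f(x,y^q)$ in the free basis $1,y,\dots,y^{q-1}$ of $\k[x,y,T]$ over $\k[x,y^q,T]$, it uses the fact that $f(x,y^q)\in\k\lb x,y^q\rb$ to conclude that each component $Q_i(x,y^q,f(x,y^q))$ vanishes separately, and then divisibility by the irreducible $Q$ forces $Q=Q_0\in\k[x,y^q,T]$; this yields directly the bijection between minimal polynomials under $y\mapsto y^q$. You instead establish outright that $P(x,y^q,T)$ is the minimal polynomial of $f(x,y^q)$: the field isomorphism $y\mapsto y^q$ gives irreducibility over $\k(x,y^q)$, and a valuation-theoretic linear-disjointness argument (any proper factor of $T^q-y^q$ over $L=\k(x,y^q,f(x,y^q))$ would have constant term of fractional $(y^q)$-adic valuation, impossible since $L\subset\Frac(\k\lb x,y^q\rb)$ has value group $\Z$) upgrades this to irreducibility over $\k(x,y)$. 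Both proofs hinge on the same key input, $f(x,y^q)\in\k\lb x,y^q\rb$, and both handle positive characteristic correctly -- in your case because the valuation count only needs every root of $T^q-y^q$ to have valuation $1/q$, separable or not. The paper's argument is more elementary; yours is conceptually transparent about \emph{why} irreducibility survives the substitution. One small imprecision: for $n>1$ the field $\k((x))((y^q))$ is not the fraction field of $\k\lb x,y^q\rb$, but your argument goes through verbatim using $\Frac(\k\lb x,y^q\rb)$ with the $(y^q)$-adic valuation, which is a genuine discrete valuation with value group $\Z$ because $(y^q)$ is a height-one prime in the regular local ring $\k\lb x,y^q\rb$.
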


\begin{proof}
If $P(x,y,T)$ is the minimal polynomial of $f(x,y)$, then $P(x,y^q,T)$ is a polynomial having $f(x,y^q)$ as a root. Thus $f(x,y^q)$ is an algebraic power series.\\
Since $\k[x,y,T]$ is a free $\k[x,y^q,T]$-module with basis $1$, $y$, $\cdots$, $y^{q-1}$, if $Q(x,y,T)$ is the minimal polynomial of $f(x,y^q)$, we can write in a unique way
$$Q(x,y,T)=Q_0(x,y^q,T)+Q_1(x,y^q,T)y+\cdots+Q_{q-1}(x,y^q,T)y^{q-1}$$
where the $Q_i(x,y^q,T)$ are polynomials. Since $Q(x,y,f(x,y^q))=0$, then we see that $Q_i(x,y^q,f(x,y^q))=0$ for all $i$. Since $Q$ is the minimal polynomial of $f(x,y^q)$, then $Q$ divides all the $Q_i(x,y^q,T)$, hence $Q=Q_0$ and $Q_i=0$ for all $i>0$. This shows that the minimal polynomial of $f(x,y^q)$ has coefficients in $\k[x,y^q]$. \\
Now if $Q(x,y^q,T)$ is the minimal polynomial of $f(x,y^q)$ then $Q(x,y,f(x,y))=0$. This proves that $P(x,y,T)$ is the minimal polynomial of $f(x,y)$ if and only if $P(x,y^q,T)$ is the minimal polynomial of $f(x,y^q)$.\\
\\
Since 
$$\deg_T(P(x,y,T))=\deg_T(P(x,y^q,T))$$
we see that $f(x,y)$ and $f(x,y^q)$ have the same degree.\\
Moreover 
$$\deg_{(x,y)}(P(x,y,T))\leq \deg_{(x,y)}(P(x,y^q,T)) \leq q\cdot\deg_{(x,y)}(P(x,y,T)).$$
This shows the inequalities concerning the heights.
\end{proof}

\vspace{0.2cm}

\begin{lemma}\label{height_extraction}
Let $f(x,y)$ be an algebraic power series where $y$ is a single variable and $q$ be a positive integer.  Let us write $q=rp^e$ where $p=\cha(\k)$, $e\in\N$ and $\gcd(r,p)=1$ (we set $e=0$ when $\cha(\k)=0$ and by convention $q=r$). Let us write
$$f(x,y)=f_0(x,y^q)+f_1(x,y^q)y+\cdots+f_{q-1}(x,y^q)y^{q-1}.$$
Then the power series $f_i(x,y^q)$ are algebraic and for any $0\leq i\leq q-1$ we have
$$\h(f_i(x,y^q))\leq q^2p^{\frac{e(e+1)}{2}}4^q\Deg(f)^{2q\Deg(f)+5q}\left(\h(f)+\frac{q(q-1)}{2}\right) \text{ if } e>0,$$
$$\h(f_i(x,y^q))\leq \Deg(f)^q(q\h(f)+q-1) \text{ if } e=0,$$
$$\Deg(f_i(x,y^q))\leq  \Deg(f)^r.$$
\end{lemma}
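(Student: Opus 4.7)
The plan is to factor the extraction through the tower $\k\lb x, y\rb \supset \k\lb x, y^r \rb \supset \k\lb x, y^q \rb$: first perform the separable $r$-extraction $f = \sum_{i'=0}^{r-1} G_{i'}(x, y^r) y^{i'}$ (which uses $\gcd(r, p) = 1$), and then, for each $G_{i'}$, perform the purely inseparable $p^e$-extraction. The final components recombine via the bijection $i = i' + r i''$ between $\{0, \ldots, q-1\}$ and $\{0, \ldots, r-1\} \times \{0, \ldots, p^e-1\}$. Since $p$-extraction via derivatives preserves the degree (Lemma \ref{height_derivation}), only the $r$-extraction contributes to the degree bound $\Deg(f_i(x, y^q)) \leq \Deg(f)^r$.

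For the $r$-extraction (which already settles the case $e = 0$), I would fix a primitive $r$-th root of unity $\zeta$ in an algebraic closure of $\k$ and use the identity $y^{i'} G_{i'}(x, y^r) = \tfrac{1}{r} \sum_{j=0}^{r-1} \zeta^{-i'j} f(x, \zeta^j y)$. To avoid an extra factor from the extension $\k(\zeta)/\k$ when applying Lemma \ref{inequalities_height}, I would construct an explicit polynomial vanishing at $y^{i'} G_{i'}(x, y^r)$ by taking the product
$$\prod_{(k_0, \ldots, k_{r-1})\in\{1,\ldots,\Deg(f)\}^r} \Bigl(T - \tfrac{1}{r} \sum_{j=0}^{r-1} \zeta^{-i'j} \alpha_j^{(k_j)}\Bigr),$$
indexed by all tuples of roots $\alpha_j^{(1)}, \ldots, \alpha_j^{(\Deg(f))}$ of $P(x, \zeta^j y, T)$, where $P$ is the minimal polynomial of $f$. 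This product is symmetric in $\alpha_j^{(1)}, \ldots, \alpha_j^{(\Deg(f))}$ for each $j$ (hence expressible as a polynomial in the coefficients of $P(x, \zeta^j y, T)$), is invariant under the Galois action $\zeta \mapsto \zeta^\ell$, and, after renormalization by a suitable power of $y$, is invariant under $y \mapsto \zeta y$. Clearing denominators coming from the leading coefficients in $T$ yields a polynomial in $\k[x, y^r, T]$ of $T$-degree $\Deg(f)^r$ whose coefficients have $(x, y)$-degree bounded by a multiple of $r \Deg(f)^r \h(f)$. Dividing through by $y^{i'}$ with Lemma \ref{inequalities_height}(iv), and invoking (as in the proof of Lemma \ref{height_power}) the fact that the minimal polynomial of a power series depending only on $y^r$ already has coefficients in $\k[x, y^r]$, one obtains $\Deg(G_{i'}(x, y^r)) \leq \Deg(f)^r$ and $\h(G_{i'}(x, y^r)) \leq \Deg(f)^r(r \h(f) + r - 1)$.

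For $e > 0$, regard each $G_{i'}(x, y^r)$ as an algebraic power series $G_{i'}(x, y_1)$ in $(x, y_1)$ with $y_1 := y^r$, inheriting the degree bound and, by Lemma \ref{height_power}, $\h(G_{i'}(x, y_1)) \leq \h(G_{i'}(x, y^r))$. The problem reduces to $p^e$-extracting $G_{i'}(x, y_1)$, achieved by iterating a $p$-extraction $e$ times. For a single $p$-extraction $h(x, y_1) = \sum_{i=0}^{p-1} h_i(x, y_1^p) y_1^i$ in characteristic $p$, the identity $\partial_{y_1} y_1^p = 0$ gives $\partial_{y_1} h = \sum_{i=1}^{p-1} i\, h_i(x, y_1^p) y_1^{i-1}$, so by iteration and Wilson's theorem $(p-1)! = -1$ we obtain $h_{p-1}(x, y_1^p) = -\partial_{y_1}^{p-1} h$; the remaining $h_{p-2}, \ldots, h_0$ are then recovered by triangular linear algebra from $\partial_{y_1}^{p-2} h, \ldots, \partial_{y_1} h, h$ and the already-computed higher $h_j$'s. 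Each derivative contributes a factor $4 \Deg^{2\Deg+4}$ to the height by Lemma \ref{height_derivation} and preserves the degree. Iterating $e$ times, together with Lemma \ref{height_power} absorbing the factor $p^k$ at the $k$-th change of variable $y_k = y_{k-1}^p$ (contributing the telescoping product $p^{1+2+\cdots+e} = p^{e(e+1)/2}$) and a final $\h(\cdot(x, y^q)) \leq q \h(\cdot(x, y_{e+1}))$ step via Lemma \ref{height_power}, yields the stated estimate for $e > 0$.

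The main obstacle is the careful bookkeeping of height and degree growth across the $e$ iterations of $p$-extraction and the intermediate change-of-variable substitutions, in particular producing the triangular-sum factor $p^{e(e+1)/2}$ rather than a larger power. Care is likewise needed in the $r$-extraction step to ensure that the degree bound is exactly $\Deg(f)^r$, not an inflated $r\Deg(f)^r$ or $\Deg(f)^q$; this is achieved by working with the explicit Galois-invariant polynomial in $\k[x, y^r, T]$ above rather than invoking Lemma \ref{inequalities_height} over the base field $\k(\zeta)(x, y)$.
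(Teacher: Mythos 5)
Your approach reverses the order of extractions relative to the paper: you do the separable $r$-extraction first (landing in $\k\lb x,y^r\rb$) and then iterate the purely inseparable $p$-extraction $e$ times, whereas the paper's proof performs the $e$ derivative-based $p$-extractions first (while the degree is still $\Deg(f)$) and does the single $r$-extraction only at the very end. This reversal is not cosmetic; it breaks the claimed quantitative height bound when $e>0$ and $r\geq 2$.

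The point is that the $p$-extraction height estimate of Lemma \ref{height_derivation} is super-polynomial in the degree of the input: differentiating costs a factor of order $\Deg^{2\Deg+4}$, so a single $p$-extraction costs roughly $\Deg^{2(p-1)\Deg}$. In the paper's order, all $e$ of these are applied to objects of degree $\leq\Deg(f)$, yielding a cumulative contribution $\sim\Deg(f)^{2e(p-1)\Deg(f)}$, which fits inside $\Deg(f)^{2q\Deg(f)}$ because $e(p-1)\leq p^e\leq q$. In your order, the $r$-extraction has already raised the degree to $\Deg(f)^r$ \emph{before} any $p$-extraction runs, so each $p$-extraction costs $\sim(\Deg(f)^r)^{2(p-1)\Deg(f)^r}=\Deg(f)^{2r(p-1)\Deg(f)^r}$, and after $e$ iterations the exponent is $\sim 2er(p-1)\Deg(f)^r$. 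This exceeds the target exponent $2q\Deg(f)=2rp^e\Deg(f)$ as soon as $e(p-1)\Deg(f)^{r-1}>p^e$, for instance already for $p=2$, $e=1$, $r=2$, $\Deg(f)\geq 3$. So the bound you derive is strictly weaker than the one the lemma asserts, and the lemma is used quantitatively downstream (in Lemma \ref{height_division2}), so the gap matters.

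Your treatment of the $e=0$ case via the explicit Galois-invariant polynomial in $\k[x,y^r,T]$ is a legitimate and arguably more careful route than the paper's appeal to a base-field extension containing a root of unity (which, if taken literally, changes the field over which $\h$ and $\Deg$ are measured). But to fix the $e>0$ case you would need to swap the steps back: perform the $e$ derivative-based $p$-extractions first, while the degree stays $\Deg(f)$, and only then do the $r$-extraction so that the degree is raised to $\Deg(f)^r$ in a single final step.
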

\vspace{0.2cm}
\begin{proof}
We need to consider several cases:\\
\\
(1) First we assume that $e=0$ i.e. $\gcd(q,p)=1$. By taking a finite extension of $\k$ we may assume that $\k$ contains a primitive $q$-th root of unity. Let $\xi$ be such a primitive root of unity. Then
$$f(x,\xi^l y)=\sum_{k=0}^{q-1}f_k(x,y^q) \xi^{lk}y^k\ \ \ \  \forall \ k,l.$$
Thus we have
$$\wdt{f}=V(\xi)F$$
where $\wdt{f}$ is the vector with entries $f(x,\xi^ly)$, $1\leq l\leq q$, $F$ is the vector with entries $f_0(x,y^q)$, $yf_1(x,y^q)$, $\cdots$, $y^{q-1}f_{q-1}(x,y^q)$ and $V(\xi)$ is the Vandermonde matrix 
$$\left[\begin{array}{ccccc}
1 & \xi  & \xi^2 & \cdots & \xi^{q-1}\\
1 & \xi^2  & \xi^4 & \cdots & \xi^{2(q-1)}\\
1 & \xi^3  & \xi^6 & \cdots & \xi^{3(q-1)}\\
\vdots & \vdots & \vdots & \ddots & \cdots\\
1 & \xi^q  & \xi^{2q} & \cdots & \xi^{(q-1)q}\\
\end{array}\right].$$
Thus 
$$F=V(\xi)^{-1}\wdt{f}$$
Since the entries of $V(\xi)^{-1}$ are in $\k$ and $\h(f(x,\xi^ly))=\h(f(x,y))$, by Lemma \ref{inequalities_height} \eqref{2} and \eqref{1} we have
$$\h(F)\leq q\Deg(f)^q\h(f),$$
$$\Deg(F)\leq \Deg(f)^q.$$
Thus by Lemma \ref{inequalities_height} \eqref{6}

$$\h(f_i(x,y^q))\leq q\Deg(f)^q\h(f)+\Deg(f)^q(q-1)=\Deg(f)^q(q\h(f)+q-1),$$

$$\Deg(f_i(x,y^q))\leq \Deg(f)^q\ \ \ \forall i.$$\\
(2) If $q=p>0$, then we have
$$\frac{\partial f}{\partial y}=f_1+2f_2y+\cdots+(p-1)f_{p-1}y^{p-2},$$
$$\cdots\cdots\cdots$$
$$\frac{\partial^{p-1}f}{\partial y^{p-1}}=(p-1)!f_{p-1}.$$
Thus we have 
$$\D f=M \wdt{f}$$
where $\D f$ is the vector of entries $\frac{\partial^k f}{\partial y^k}$, for $0\leq k\leq p-1$, $\wdt{f}$ is the vector with entries $f_l(x,y^p)$, for $0\leq l\leq p-1$, and $M$ is a upper triangular matrix with entries in $\k[y]$ and whose determinant is in $\k$. We can check that the $(p-1)\times(p-1)$ minors of $M$ are polynomials of degree $\leq \frac{p(p-1)}{2}$. Thus the height of the coefficients of $M^{-1}$ is less than $\frac{p(p-1)}{2}$. Since
$$\wdt{f}=M^{-1}\D f,$$
by Lemma \ref{inequalities_height} \eqref{2} we obtain

\begin{equation*}\begin{split}\h(f_k(x,y^p))\leq p\Deg(f)\Deg\left(\frac{\partial f}{\partial y}\right)\cdots\Deg\left(\frac{\partial^{p-1} f}{\partial y^{p-1}}\right)\times &\\
\left(\max_{0\leq i\leq p-1}\right.&\left.\left\{\h\left(\frac{\partial^{i}f}{\partial y^{i}}\right)\right\}+\frac{p(p-1)}{2}\right).\end{split}\end{equation*}
Thus by Lemma \ref{height_derivation} we have

$$\h(f_k(x,y^p))\leq p\Deg(f)^p\left(\max_{0\leq i\leq p-1}\left\{\h\left(\frac{\partial^{i}f}{\partial y^{i}}\right)\right\}+\frac{p(p-1)}{2}\right).$$
By applying Lemma \ref{height_derivation} $p-1$ times we obtain

$$\h\left(\frac{\partial^{p-1}f}{\partial y^{p-1}}\right)\leq 4^{p-1}\Deg(f)^{(2\Deg(f)+4)(p-1)}\h(f).$$
Thus we have

$$\h(f_k(x,y^p))\leq p4^{p-1}\Deg(f)^{2(p-1)\Deg(f)+5p-4}\left(\h(f)+\frac{p(p-1)}{2}\right).$$
Moreover, still by Lemma \ref{height_derivation} we have
$$\Deg(f_k(x,y^p))\leq \Deg(f) \ \ \ \forall k.$$\\
(3) If $q=rp^e$ where $\gcd(r,p)=1$ and $e>0$, we write

$$f=\wdt f_{0}(x,y^p)+\wdt f_{1}(x,y^p)y+\cdots+\wdt f_{p-1}(x,y^p)y^{p-1}$$

$$\wdt f_{i}(x,y^p)=\wdt f_{i,0}(x,y^{p^2})+\wdt f_{i,1}(x,y^{p^2})y^p+\cdots+\wdt f_{i,p-1}(x,y^{p^2})y^{p(p-1)}$$

$$\wdt f_{i,j}(x,y^{p^2})= \wdt f_{i,j,0}(x,y^{p^3})+\wdt f_{i,j,1}(x,y^{p^3})y^{p^2}+\cdots+\wdt f_{i,j,p-1}(x,y^{p^3})y^{p^2(p-1)} $$

$$\cdots\cdots\cdots$$
$$\wdt f_{i_1,\cdots,i_{e-1}}(x,y^{p^{e-1}})=\wdt f_{i_1,\cdots,i_{e-1},0}(x,y^{p^{e}})+\cdots+\wdt f_{i_1,\cdots,i_{e-1},p-1}(x,y^{p^{e}})y^{p^{e-1}(p-1)}$$

$$\wdt f_{i_1,\cdots,i_e}(x,y^{p^e})= \wdt f_{i_1,\cdots,i_e,0}(x,y^q)+\cdots+\wdt f_{i_1,\cdots,i_e,r-1}(x,y^q)y^{p^e( r-1)}.$$
\\
Then by (2) we obtain, for $k\leq e$,
$$\Deg(\wdt f_{i_1,\cdots,i_k}(x,y^p))\leq \Deg(\wdt f_{i_1,\cdots,i_{k-1}}(x,y)),$$
\begin{equation*}\begin{split}\h(\wdt f_{i_1,\cdots,i_k}(x,y^p))\leq  p4^{p-1}\Deg(\wdt f_{i_1,\cdots,i_{k-1}}(x,y))^{2(p-1)\Deg(\wdt f_{i_1,\cdots,i_{k-1}}(x,y))+5p-4}\times\hspace{2cm} \\
\left(\h(\wdt f_{i_1,\cdots,i_{k-1}}(x,y))+\frac{p(p-1)}{2}\right).\end{split}\end{equation*}
Thus by Lemma \ref{height_power} we have
\begin{equation*}\begin{split}\frac{1}{p^{k-1}}\h(\wdt f_{i_1,\cdots,i_k}(x,y^{p^k}))\leq  p4^{p-1}\Deg(\wdt f_{i_1,\cdots,i_{k-1}}(x,y^{p^{k-1}}))^{2(p-1)\Deg(\wdt f_{i_1,\cdots,i_{k-1}}(x,y^{p^{k-1}}))+5p-4}\times\\
\left(\h(\wdt f_{i_1,\cdots,i_{k-1}}(x,y^{p^{k-1}}))+\frac{p(p-1)}{2}\right).\end{split}\end{equation*}
\\
By (1) we obtain
$$\Deg(\wdt f_{i_1,\cdots,i_{e+1}}(x,y^r))\leq \Deg(\wdt f_{i_1,\cdots,i_{e}}(x,y))^r,$$
$$\h(\wdt f_{i_1,\cdots,i_{e+1}}(x,y^r))\leq \Deg(\wdt f_{i_1,\cdots,i_e}(x,y))^r(r\h(\wdt f_{i_1,\cdots,i_e}(x,y))+r-1)$$
and, by Lemma \ref{height_power},
$$\frac{1}{p^e}\h(\wdt f_{i_1,\cdots,i_{e+1}}(x,y^q))\leq \Deg(\wdt f_{i_1,\cdots,i_e}(x,y^{p^e}))^r(r\h(\wdt f_{i_1,\cdots,i_e}(x,y^{p^e}))+r-1)$$
\\
Since the power series $f_i(x,y^q)$ of the statement of the lemma are expressed by  the power series 
$$\wdt f_{i_1,\cdots,i_{e+1}}(x,y^q),$$
 by induction and Lemma \ref{height_power} we deduce 
$$\Deg(f_i(x,y^q))\leq \Deg(f)^r,$$
$$\h(f_i(x,y^q))\leq   p^{\frac{e(e+1)}{2}}q4^{p^e-1}\Deg(f)^{r+2(p-1)e\Deg(f)+(5p-4)e}\left(r\h(f)+(e+1)\frac{r(r-1)}{2}\right)$$
$$\leq q^2p^{\frac{e(e+1)}{2}}4^q\Deg(f)^{2q\Deg(f)+5q}\left(\h(f)+\frac{q(q-1)}{2}\right).$$
\end{proof}

%
%

\section{Effective Weierstrass Division Theorem}\label{W}
In this part we prove an effective Weierstrass Division Theorem for algebraic power series. The proof (thus the complexity) is more complicated in the positive characteristic case since the Weierstrass polynomial associated to the divisor  $f$ may have irreducible factors that are not separable. The proof we give here is essentially the same as the one given in \cite{La}.
  
  \begin{lemma}[Weierstrass Preparation Theorem]\label{height_preparation}
  Let $\k$ be any field.
  Let $f$ be an algebraic power series which is $x_n$-regular of order $d$. Then there exist a unit $u\in\k\lag x\rag$ and a Weierstrass polynomal $P\in\k\lag x'\rag[x_n]$ such that 
  $$f=u\cdot P$$
  and 
  $$\Deg(P)\leq \h(f)!,$$
  $$\h(P)\leq 2d\h(f)^{d+1}.$$
  
  \end{lemma}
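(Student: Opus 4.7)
The plan is to apply the classical Weierstrass Preparation Theorem and then bound the complexity of the resulting Weierstrass polynomial via its roots. Since $f$ is $x_n$-regular of order $d$ and $\k\lag x\rag$ is Henselian, the Weierstrass Preparation Theorem in the algebraic setting (see Lafon \cite{La}) gives a factorization $f=u\cdot P$ with $u\in\k\lag x\rag$ a unit and
$$P=x_n^d+a_1(x')x_n^{d-1}+\cdots+a_d(x')\in \k\lag x'\rag[x_n]$$
a Weierstrass polynomial; the task reduces to bounding $\Deg(a_k)$ and $\h(a_k)$ uniformly in $k$. Since $u$ is a unit, the roots of $f$ and of $P$ in $\wdh{\KK}_{n-1}$ coincide, and by Remark-Definition \ref{roots} they all lie in $\K$. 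Applying Lemma \ref{height_lemma} to the distinct roots $\a_1,\ldots,\a_r$ of $P$ (with $r\leq d$) yields $\h(\a_j),\Deg(\a_j)\leq \h(f)$ and $[\k(x',\a_1,\ldots,\a_r):\k(x')]\leq \h(f)!$. Since each $a_k$ is, up to sign, an elementary symmetric polynomial in the roots of $P$ counted with multiplicity, $a_k\in \k(x',\a_1,\ldots,\a_r)$, which immediately yields $\Deg(P)\leq \h(f)!$.

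For the height bound, I would construct an explicit polynomial in $\k[x',U]$ of controlled $x'$-degree vanishing at each $a_k$. Let $\Pi(x,T)$ be the minimal polynomial of $f$ and set $Q(x',T):=\Pi(x',T,0)$; by the proof of Lemma \ref{height_lemma}, $Q\in\k[x',T]$ has $h:=\deg_T Q\leq \h(f)$, $\deg_{x'}Q\leq \h(f)$, and vanishes at each $\a_j$. Write $\b_1,\ldots,\b_h$ for the roots of $Q$ in an algebraic closure of $\k(x')$ and $q_0,\ldots,q_h$ for its coefficients, so that $e_i(\b)=(-1)^i q_{h-i}/q_h$. I would form the resolvent
$$\wdt R_k(x',U):=\prod_{(i_1,\ldots,i_d)\in\{1,\ldots,h\}^d}\bigl(U-e_k(\b_{i_1},\ldots,\b_{i_d})\bigr).$$
By choosing $(i_1,\ldots,i_d)$ so that $\b_{i_j}$ equals the $j$-th root of $P$ counted with multiplicity (ordered tuples permit repeated indices), one sees that $a_k$ is a root of $\wdt R_k$. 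Its coefficients are symmetric in $\b_1,\ldots,\b_h$, hence, by the fundamental theorem of symmetric polynomials, polynomials in the $e_i(\b)$ of weighted degree at most $k\cdot h^d$. After multiplying by a suitable power of $q_h$ to clear denominators, one obtains a polynomial in $\k[x',U]$ of $x'$-degree at most $h^d\cdot k\cdot \h(f)\leq d\,\h(f)^{d+1}\leq 2d\,\h(f)^{d+1}$, giving $\h(a_k)\leq 2d\,\h(f)^{d+1}$.

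The principal technical obstacle is this weighted-degree bookkeeping: one must carefully track how the weighted degree of a symmetric polynomial in the $\b_i$ translates into an $x'$-degree bound after substituting $e_i(\b)=(-1)^i q_{h-i}/q_h$ and clearing the $q_h$-denominators. A conceptually minor but essential point is the use of ordered tuples in $\{1,\ldots,h\}^d$ rather than unordered $d$-subsets: the multiplicity of a root $\a_j$ of $P$ may strictly exceed that of the corresponding root of $Q$ (for example, $f=x_n^2$ has minimal polynomial $T-x_n^2$, so $Q=T$ has $0$ as a simple root while $P=x_n^2$ has it with multiplicity $2$), and the ordered-tuple product accommodates this at the cost of inflating the $U$-degree of $\wdt R_k$ from $\binom{h}{d}$ to $h^d$.
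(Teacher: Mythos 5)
Your proof establishes the algebraicity of $u$ and $P$ and the degree bound $\Deg(P)\leq \h(f)!$ by essentially the same route as the paper, but for the height bound you are bounding the wrong quantity. The lemma asserts $\h(P)\leq 2d\h(f)^{d+1}$, where $\h(P)$ is the height of $P$ viewed as a single algebraic power series of $\k\lag x\rag$, i.e.\ as an algebraic element over $\k(x)$; this is the quantity that is fed into Lemma \ref{height_division2} later (where $\h(P)$ appears as the parameter $\h(f)$ of Lemma \ref{height_division1} applied to the divisor $\wdt P$). Your resolvent argument instead bounds $\h(a_k)$ for each coefficient $a_k\in\k\lag x'\rag$, treated as an algebraic element over $\k(x')$. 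These are not the same invariant, and one cannot pass from $\max_k\h(a_k)\leq 2d\h(f)^{d+1}$ to $\h(P)\leq 2d\h(f)^{d+1}$ without loss: the natural combination step would be Lemma \ref{inequalities_height}\eqref{2} applied to $P=\sum_{k=0}^d x_n^{d-k}a_k$, which introduces the factor $(d+1)\prod_k\Deg(a_k)$ (of size roughly $(\h(f)!)^{d+1}$), far exceeding the target bound.

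The paper avoids this entirely by bounding $\h(P)$ directly from the factorization $P=\prod_{i=1}^d(x_n-\a_i)$, with $\a_1,\ldots,\a_d$ the roots of $P$ in $\K$ counted with multiplicity. By Lemma \ref{height_lemma}, $\h(\a_i),\Deg(\a_i)\leq\h(f)$, and by Lemma \ref{inequalities_height}\eqref{3} each factor $x_n-\a_i$ has $\h(x_n-\a_i)\leq \h(\a_i)+\Deg(\a_i)\leq 2\h(f)$ and $\Deg(x_n-\a_i)=\Deg(\a_i)\leq\h(f)$. Then Lemma \ref{inequalities_height}\eqref{5} gives $\h(P)\leq d\cdot\h(f)^d\cdot 2\h(f)=2d\h(f)^{d+1}$ in one step. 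Your symmetric-function resolvent is a legitimate way to bound the heights of the individual coefficients $a_k$ (and could be useful in a context where those are the target), but as written it does not prove the statement of the lemma; you would need to either switch to the product-over-roots argument or add a separate combination step, and the latter will not give a bound of the required shape.
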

  \vspace{0.2cm}
  \begin{proof}
  The existence of $u$ and $P$ comes from the Weierstrass Preparation Theorem for formal power series.\\
  Let $\a_1$, $\cdots$, $\a_d\in\K$ be the roots of $P(x_n)$ counted with multiplicities. Then  we have $P=\prod_{i=1}^d(x_n-\a_i)$. By Remark \ref{roots} the roots of $P(x_n)$ are the roots of $f$ thus, by Lemma \ref{height_lemma}, $P$ is an algebraic power series. Hence  $u$ is also an algebraic power series. \\
By Lemma \ref{inequalities_height} \eqref{3} $\h(x_n-\a_i)\leq \h(\a_i)+\Deg(\a_i)$ and $\Deg(x_n-\a_i)=\Deg(\a_i)\leq \h(f)$ for all $i$ by Lemma \ref{height_lemma}. Thus, by Lemma \ref{inequalities_height} \eqref{5},
  $$\h(P)\leq d\cdot \Deg(\a_1)\cdots\Deg(\a_d)\cdot \max_i\{\h(\a_i)+\Deg(\a_i)\}\leq d\h(f)^d(\h(f)+\h(f)).$$
  Moreover $P\in\k(x,\a_1,\cdots,\a_d)$. But $[\k(x,\a_1,\cdots,\a_d):\k(x)]\leq \h(f)!$ by Lemma \ref{height_lemma} hence
    $$\Deg(P)\leq \h(f)!$$
  \end{proof}

\begin{lemma}\label{height_division1}
  Let $f$ be an algebraic power series which is $x_n$-regular of order $d$ and let us assume that $f$ has $d$ distinct roots in $\K$. Let $g$ be any algebraic power series. Then there exist unique algebraic power series $q$ and $r$ such that $r\in\k\lag x'\rag[x_n]$ is of degree $<d$ in $x_n$ and
  $$g=fq+r.$$
  Moreover, if $r=r_0+r_1x_n+\cdots+r_{d-1}x_n^{d-1}$, we have 
  $$\h(r_i)\leq 4d(\h(f)!)^{d+1}\h(f)^2\Deg(g)\max\left\{d!\frac{d(d-1)}{2}\h(f)^{\frac{d(d-1)}{2}} (\h(f)!)^{d!+2}, \h(g)\right\}$$
    $$\leq 4 \h(f)^{\h(f)^{O(d)}} \Deg(g)(\h(g)+1)\ \ \ \forall i,$$
    where $O(d)$ denotes a function of $d$ bounded by a linear function in $d$,
  $$\h(r)\leq d \left( \h(f)!\Deg(g)^d\right)^d(\max_i\{\h(r_i)\}+d-1)$$
  $$\text{ and }\ \Deg(r_i),\ \Deg(r)\leq   \h(f)!\Deg(g)^d\ \ \ \forall i.$$
    \end{lemma}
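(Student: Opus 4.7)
The plan is to use Lagrange interpolation at the roots of $f$ to write the remainder $r$ explicitly, and then to push the bounds of Section \ref{h} through that formula.

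First, existence and uniqueness of $q$ and $r$ is the classical Weierstrass Division Theorem in $\k\lb x\rb$. That $q, r \in \k\lag x\rag$ is Lafon's theorem; the only real work is to bound $\h(r)$ and $\Deg(r)$. Let $P \in \k\lag x'\rag[x_n]$ be the Weierstrass polynomial of $f$, so $P = \prod_{i=1}^d (x_n - \a_i)$ with $\a_1, \ldots, \a_d \in \K$ pairwise distinct. The uniqueness of the division shows that $r$ is the unique polynomial in $x_n$ of degree $<d$ with coefficients in $\k\lag x'\rag$ satisfying $r(x', \a_i) = g(x', \a_i)$ for every $i$ (indeed $g - r$ then vanishes at each $\a_i$ in $\wdh{\KK}_{n-1}$, hence is divisible by $P$ there, and a fortiori in $\k\lag x'\rag[x_n]$ by uniqueness of the formal quotient). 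Lagrange interpolation therefore gives the explicit formula
\begin{equation*}
r(x', x_n) \;=\; \sum_{i=1}^d g(x', \a_i) \prod_{j \neq i} \frac{x_n - \a_j}{\a_i - \a_j}.
\end{equation*}

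The degree bounds come immediately from field theory. Every $r_k$ lies in the field $L := \k(x', \a_1, \ldots, \a_d, g(x', \a_1), \ldots, g(x', \a_d))$, and Corollary \ref{aschen} gives $[L : \k(x')] \leq \h(f)! \, \Deg(g)^d$. Since $\k(x)/\k(x')$ is purely transcendental, the same bound holds for $[L(x_n) : \k(x)]$, so $\Deg(r_k), \Deg(r) \leq \h(f)!\,\Deg(g)^d$ as required.

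For the height bounds, I would expand the Lagrange formula and apply Lemma \ref{inequalities_height} componentwise. By Lemma \ref{height_lemma}, $\h(\a_i), \Deg(\a_i) \leq \h(f)$; by Lemma \ref{inequalities_height} \eqref{3}, $\h(\a_i - \a_j) \leq 2\h(f)$, and its degree and that of its inverse are at most $\h(f)^2$. The denominator $c_i := \prod_{j \neq i}(\a_i - \a_j)^{-1}$ therefore has $\Deg(c_i) \leq \h(f)^{2(d-1)}$ and $\h(c_i) \leq (d-1)\h(f)^{2(d-1)}\cdot 2\h(f)$ by \eqref{5}. Each coefficient $\ell_{i,k}$ in $x_n$ of $\prod_{j \neq i}(x_n - \a_j)$ is an elementary symmetric polynomial in the $\a_j$, hence by \eqref{2}--\eqref{5} has degree at most $\h(f)^{d-1}$ and height bounded by $d \cdot \h(f)^{d-1} \cdot \h(f) = d\,\h(f)^d$. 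Lemma \ref{height_compo1} gives $\Deg(g(x', \a_i)) \leq \h(f)\Deg(g)$ and $\h(g(x', \a_i)) \leq 2\h(f)\,\h(g)$. Writing $r_k = \sum_i g(x', \a_i) c_i \ell_{i,k}$ and applying \eqref{2} and \eqref{6} to the product of three algebraic elements then yields the stated bound for $\h(r_k)$, after accounting for $[\k(x', \a_1, \ldots, \a_d) : \k(x')] \leq \h(f)!$ which is what controls $\Deg$ of each factor through the symmetric-function packaging. Finally, $\h(r) \leq d \cdot \Deg(r_0)\cdots\Deg(r_{d-1}) (\max_k\{\h(r_k)\} + d-1)$ comes from one more application of \eqref{2} to $r = \sum_k r_k x_n^k$, using $\Deg(x_n^k) = 1$ and $\h(x_n^k) = k \leq d-1$.

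The routine but error-prone step will be step three: the various symmetric-function expansions feed into each other and one must be careful to use \eqref{2} rather than \eqref{6} at each sum in order to get a clean product-of-degrees prefactor. The structural content, however, is entirely captured by the Lagrange formula together with the field-degree bound from Corollary \ref{aschen}.
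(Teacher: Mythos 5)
Your proposal takes essentially the same route as the paper: evaluating $g-r$ at the $d$ distinct roots $\a_1,\ldots,\a_d$, solving for the coefficients $r_0,\ldots,r_{d-1}$, and bounding via Lemma \ref{inequalities_height}, Lemma \ref{height_compo1}, and Corollary \ref{aschen}. The paper phrases this as inverting the Vandermonde matrix $V(\a)$, and you phrase it as the Lagrange interpolation formula, which is the same thing in closed form. Two small points on the bookkeeping, though. First, the claim ``$\h(\a_i-\a_j)\leq 2\h(f)$ by \eqref{3}'' misapplies Lemma \ref{inequalities_height}: item \eqref{3} requires $a_1\in\k(x)$, whereas here both $\a_i$ and $\a_j$ are algebraic over $\k(x')$, so one must use \eqref{2}, which gives $\h(\a_i-\a_j)\leq 2\Deg(\a_i)\Deg(\a_j)\max\{\h(\a_i),\h(\a_j)\}\leq 2\h(f)^3$. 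Second, the intermediate estimate $\Deg(c_i)\leq \h(f)^{2(d-1)}$, obtained multiplicatively, is much weaker than the field-theoretic bound $\Deg(c_i)\leq \h(f)!$ (since $c_i\in\k(x',\a_1,\ldots,\a_d)$, whose degree over $\k(x')$ is bounded by $\h(f)!$ by Lemma \ref{height_lemma}); if you feed $\h(f)^{2(d-1)}$ into \eqref{5} you will overshoot the stated bound, so you need to use the $\h(f)!$ bound consistently when controlling degrees in the products. The paper sidesteps the $\a_i-\a_j$ issue entirely by writing the inverse Vandermonde entries as minors over the determinant, each a sum of monomials $\a_{\s(0)}^0\cdots\a_{\s(d-1)}^{d-1}$, which involves only products and can be bounded directly by \eqref{5} together with the field-degree bound. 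These are presentation-level differences, not structural ones; with those two corrections your argument reproduces the paper's proof.
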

    \vspace{0.2cm}
 \begin{proof}
 The Weierstrass Division Theorem for algebraic power series is well known (see \cite{La}), the only improvement  is the inequalities on the heights and degrees. \\
The Weierstrass Division Theorem for formal power series gives the existence and unicity of $q$ and $r$. Thus we have to show that $q$ and $r$ are algebraic and to prove the bounds on the heights and degrees. 
 Let $\a_1$, $\cdots$, $\a_d\in \K$ be the roots of $f$. Then we have
$$g(x',\a_i)=r(x',\a_i) \ \ \ \  \forall i.$$
By writing $r=r_0+r_1x_n+\cdots+r_{d-1}x_n^{d-1}$ with $r_j\in\k\lb x'\rb$ for all $j$, we obtain:
$$V(\a)\wdt{r}=\wdt{g}(\a)$$
where $V(\a)$ is the $d\times d$ Vandermonde matrix of the $\a_i$:
$$\left[\begin{array}{ccccc}
1 & \a_1 & \a_1^2 & \cdots & \a_1^{d-1}\\

1 & \a_2 & \a_2^2 & \cdots & \a_2^{d-1}\\
\vdots & \vdots & \vdots & \vdots & \vdots\\

1 & \a_d & \a_d^2 & \cdots & \a_d^{d-1}
\end{array}\right],$$

\noindent $\wdt{r}$ is the $d\times 1$ column vector with entries $r_k$, and $\wdt{g}(\a)$ is the $d\times 1$ column vector with entries $g(x',\a_j)$. Since the $\a_i$ are distinct $V(\a)$ is invertible and
 we obtain 
\begin{equation}\label{vdm} \wdt{r}=V(\a)^{-1}\wdt{g}(\a).\end{equation}
By Lemmas \ref{height_lemma} and  \ref{height_compo1} we see the $g(x',\a_j)$ are algebraic. Then Equality \eqref{vdm} shows that  the $r_i$ and $r$ are algebraic power series, thus $q$ is also an algebraic power series. Again by Lemmas \ref{height_lemma} and \ref{height_compo1} we have for all $i$:
$$\h(g(x',\a_i))\leq 2\h(g)\cdot\h(f),$$
$$\Deg(g(x',\a_i))\leq \h(f)\cdot\Deg(g).$$\\
The determinant of $V(\a)$ is the sum of $d!$ elements of the form 
$$\a_{\s(0)}^0\a_{\s(1)}^1\a_{\s(2)}^2\cdots\a_{\s(d-1)}^{d-1},$$
 where $\s$ is a permutation of $\{0,\cdots, d-1\}$. Each of these elements belongs to $\k(x',\a_1, \cdots,\a_d)$ so their degree is bounded  $\h(f)!$ by Lemma \ref{height_lemma}. Again by Lemma \ref{height_lemma}  $\h(\a_i)\leq \h(f)$ and $\Deg(\a_i)\leq \h(f)$ for any $i$, thus by Lemma \ref{inequalities_height} \eqref{5} we see that for any permutation $\s$ we have:
 $$\h(\a_{\s(0)}^0\a_{\s(1)}^1\a_{\s(2)}^2\cdots\a_{\s(d-1)}^{d-1})\leq \frac{d(d-1)}{2}\h(f)^{\frac{d(d-1)}{2}+1}.$$
 Thus  by Lemma \ref{inequalities_height} \eqref{2} we have 
 $$\h(\det(V(\a)))\leq d!\frac{d(d-1)}{2}\h(f)^{\frac{d(d-1)}{2}+1}(\h(f)!)^{d!}.$$
 \\
The entries of $V(\a)^{-1}$ are $(d-1)\times (d-1)$ minors of $V(\a_i)$ divided by $\det(V(\a))$. Exactly as above the height of such an $(d-1)\times(d-1)$ minor is bounded by
$$(d-1)!\frac{(d-1)(d-2)}{2}\h(f)^{\frac{(d-1)(d-2)}{2}+1}(\h(f)!)^{(d-1)!}$$
and its degree is bounded by $\h(f)!$ since it is an element of $\k(x',\a_1, \cdots,\a_d)$ (see Lemma \ref{height_lemma}).
Hence by Lemma \ref{inequalities_height} \eqref{5}  the height of the entries of $V(\a)^{-1}$ is bounded by
 $$\h_V:=2d!(\h(f)!)^2(\h(f)!)^{d!}\frac{d(d-1)}{2}\h(f)^{\frac{d(d-1)}{2}+1} =$$
 $$=2d!\frac{d(d-1)}{2} \h(f)^{\frac{d(d-1)}{2}+1}(\h(f)!)^{d!+2}.$$
 Moreover their degree is bounded by $\h(f)!$
 since they belong to $\k(x,\a_1, \cdots,\a_d)$.\\
 \\
 If $v$ is an entry of $V(\a)^{-1}$ Lemma \ref{inequalities_height} \eqref{5} shows
 
 $$\h(vg(x',\a_i))\leq 2\h(f)! \Deg(g(x',\a_i))\max\{\h_V,\h(g(x',\a_i))\} \ \ \forall i.$$
Since $r_j$ is of the form $v_1g(x',\a_1)+\cdots+v_dg(x',\a_d)$ where $v_1$, $\cdots$, $v_d$ are entries of $V(\a)^{-1}$ (by Equation \eqref{vdm}) we obtain from Lemma \ref{inequalities_height} \eqref{2}:
$$\h(r_j)\leq d(\h(f)!)^{d}\max_i\left\{2\h(f)!\Deg(g(x',\a_i))\max\{\h_V,\h(g(x',\a_i))\}\right\}. $$
Hence Lemmas \ref{height_lemma} and \ref{height_compo1} show

\begin{equation*}\begin{split}\h(r_j)\leq 4d(\h(f)!)^{d+1}\h(f) &\Deg(g)\times \\ 
\max&\left\{d!\frac{d(d-1)}{2} \h(f)^{\frac{d(d-1)}{2}+1}(\h(f)!)^{d!+2}, \h(f)\h(g)\right\}\end{split}\end{equation*}
$$=4d(\h(f)!)^{d+1}\h(f)^2\Deg(g)\max\left\{d!\frac{d(d-1)}{2}\h(f)^{\frac{d(d-1)}{2}} (\h(f)!)^{d!+2}, \h(g)\right\}.$$
Moreover  $r_j$ and $r\in\k(x',\a_1,\cdots,\a_d,g(x',\a_1),\cdots,g(x',\a_d))$, hence we have (by Corollary \ref{aschen}):
$$\Deg(r_j)\leq   \h(f)!\Deg(g)^d,\ \ \Deg(r)\leq \h(f)!\Deg(g)^d.$$ 
Since 
$$r=r_0+x_nr_n+\cdots+x_n^{d-1}r_{d-1},$$
$$H(r)\leq d \left( \h(f)!\Deg(g)^d\right)^d(\max_i\{\h(r_i)\}+d-1) $$
by Lemma \ref{inequalities_height} \eqref{2}.
 \end{proof}
 
 \begin{lemma}\label{height_division2}
Let assume that $\k$ is a field of characteristic $p>0$. Let $f$ be an irreducible algebraic power series which is $x_n$-regular of order $d$ and let us assume that its Weierstrass polynomial is not separable. Let $g$ be any algebraic power series. Then there exist unique algebraic power series $q$ and $r$ such that $r\in\k\lag x'\rag[x_n]$ is of degree $<d$ in $x_n$ and
  $$g=fq+r.$$
  Moreover, if $r=r_0+r_1x_n+\cdots+r_{d-1}x_n^{d-1}$, we have 
 $$ \h(r_i)\leq (2\h(f))^{(2\h(f))^{O(d)}}\Deg(g)^{2d(\Deg(g)+2)}(\h(g)+1) \ \ \ \forall i,$$
  $$\h(r)\leq (2\h(f))^{(2\h(f))^{O(d)}}\Deg(g)^{O(d\,\Deg(g))}(\h(g)+1)\ \ \ \forall i,$$
  $$\Deg(r_i),\ \Deg(r)\leq \h(f)!\Deg(g)^d \ \ \forall i.$$

 \end{lemma}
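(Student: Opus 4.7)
The plan is to reduce to the separable case of Lemma \ref{height_division1} by extracting a $p^e$-th root in the last variable. By Lemma \ref{height_preparation} write $f = u \cdot P$ with $u \in \k\lag x\rag$ a unit and $P \in \k\lag x'\rag[x_n]$ the Weierstrass polynomial of $f$; since $f$ is irreducible, so is $P$, and its inseparability yields an integer $e \geq 1$ and an irreducible separable Weierstrass polynomial $Q \in \k\lag x'\rag[y]$ of degree $d' = d/p^e$ with $P(x', x_n) = Q(x', x_n^{p^e})$. Dividing $g$ by $f$ then reduces to dividing by $P$ (and then dividing the quotient by the unit $u$).

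Using Lemma \ref{height_extraction} with $q = p^e$, decompose
$$g(x', x_n) = \sum_{i=0}^{p^e-1} g_i(x', x_n^{p^e}) \, x_n^i.$$
Since $Q(x', y)$ is separable of degree $d'$ in $y$, it has $d'$ distinct roots in the algebraic closure of $\k((x'))$, so Lemma \ref{height_division1} applies to each division of $g_i(x', y)$ by $Q(x', y)$ in $\k\lag x', y\rag$:
$$g_i(x', y) = Q(x', y) \, q_i(x', y) + r_i(x', y), \qquad \deg_y r_i < d'.$$
Substituting $y = x_n^{p^e}$ and summing gives
$$g = P(x', x_n) \sum_{i=0}^{p^e-1} q_i(x', x_n^{p^e}) \, x_n^i + r, \qquad r := \sum_{i=0}^{p^e-1} r_i(x', x_n^{p^e}) \, x_n^i,$$
where $r$ has $x_n$-degree at most $(d'-1)p^e + (p^e - 1) = d - 1$, so $r$ is the desired Weierstrass remainder.

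What remains is bookkeeping the heights and degrees. One would first bound $\h(Q)$ and $\Deg(Q)$ as an algebraic power series in $(x', y)$ from the relation $P(x', x_n) = Q(x', x_n^{p^e})$: either by a $\Res_{x_n}$ computation with the minimal polynomial of $P$, or by applying Lemma \ref{height_extraction} to $P$ (only the $i=0$ term survives) followed by Lemma \ref{height_power}. Similarly one bounds $\h(g_i(x', y))$ and $\Deg(g_i(x', y))$ via Lemmas \ref{height_extraction} and \ref{height_power}, the key point being that both remain linear in $\h(g)$. Feeding these into Lemma \ref{height_division1} controls $\h(r_i(x', y))$ and $\Deg(r_i)$, and a final application of Lemma \ref{height_power} (for the substitution $y \mapsto x_n^{p^e}$) together with Lemma \ref{inequalities_height} \eqref{2} (for the sum defining $r$) yields the bound on $\h(r)$. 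The main obstacle is that Lemma \ref{height_division1} has a tower-type bound of the shape $\h(Q)^{\h(Q)^{O(d')}}$; composing this naively with a bound on $\h(Q)$ risks producing a triple-exponential in $\h(f)$, so one must exploit $p^e \leq d \leq \h(f)$ and $\Deg(P) \leq \h(f)!$ and absorb polynomial-in-$d$ factors into the $O(d)$ notation at each step to recover the stated double-exponential form $(2\h(f))^{(2\h(f))^{O(d)}}$.
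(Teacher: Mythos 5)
Your proposal is correct and follows essentially the same route as the paper: both pass to the Weierstrass polynomial via $P(x',x_n) = \wdt P(x', x_n^{p^e})$ with $\wdt P$ of degree $D = d/p^e$ having distinct roots (your separable $Q$ is the paper's $\wdt P$), split $g$ into its $p^e$ residue classes by Lemma \ref{height_extraction}, divide each $g_i(x', y)$ by $\wdt P(x', y)$ via Lemma \ref{height_division1}, substitute $y = x_n^{p^e}$, and reassemble $r$ by uniqueness of the Weierstrass remainder. The bookkeeping you flag as the delicate step is precisely what the paper carries out, using $\h(\wdt P) \le \h(P) \le 2d\,\h(f)^{d+1}$ together with $D,\, p^e \le d \le \h(f)$ and $\Deg(g_i(x',y)) \le \Deg(g)$.
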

 
 \begin{proof}
 Let $P$ denote the Weierstrass polynomial of $f$. Since $f$ is an irreducible power series  then $P$ is an irreducible monic polynomial of $\k\lb x'\rb[x_n]$ hence $P$ is an irreducible polynomial of $\k((x'))[x_n]$. Then we can write
 $$P=\prod_{k=1}^D(x_n-\a_i)^{p^e}$$
 where $\a_1$, $\cdots$, $\a_D$ are the distinct roots of $P(x_n)$ in $\K$ and $e$ is a positive integer. Thus $P\in\k\lag x'\rag[x_n^{p^e}]$ by Lemma \ref{height_preparation} and $d=Dp^e$. By the Weierstrass Division Theorem for formal power series we have
 $$g=Pq+r$$
 where $$r=r_0+r_1x_n+\cdots+r_{d-1}x_n^{d-1}$$
 and $r_i\in\k\lb x'\rb$. Let us write
 $$g=g_0(x',x_n^{p^e})+g_1(x',x_n^{p^e})x_n+\cdots+g_{p^e-1}(x',x_n^{p^e})x_n^{p^e-1}$$
 where $g_i:=g_i(x',x_n^{p^e})\in\k\lag x',x_n^{p^e}\rag$ for all $i$ by  Lemma \ref{height_extraction} . \\
 We define $\wdt{P}$  by
 $$\wdt{P}(x',x_n^{p^e})=P(x',x_n).$$
 Then $\wdt{P}(x',x_n)$ is a Weierstrass polynomial in $x_n$ of degree $D$ with algebraic power series coefficients  and $\h(\wdt{P}(x',x_n))\leq\h(P(x',x_n))$ by Lemma \ref{height_power}.
  Let us perform the Weierstrass Division of $g_i(x',x_n)$ by $\wdt{P}$:
 $$g_i(x',x_n)=\wdt{P}q_i+\sum_{j=0}^{D-1}r_{i,j}(x')x_n^j.$$
 By Lemma \ref{height_division1} the $r_{i,j}(x')$ are algebraic power series and
 \begin{equation}\begin{split}\h(r_{i,j})\leq 4D(\h(P)!)^{D+1}&\h(P)^2\Deg(g_i(x',x_n))\\
 \max&\left\{D!\frac{D(D-1)}{2}\h(P)^{\frac{D(D-1)}{2}} (\h(P)!)^{D!+2}, \h(g_i(x',x_n))\right\}.\end{split}\end{equation}
By Lemma \ref{height_power} we have $\Deg(g_i(x',x_n))\leq \Deg(g(x',x_n^{p^e}))$ for every $i$ thus, by  Lemma \ref{height_extraction}, we have $\Deg(g_i(x',x_n))\leq \Deg(g)$. Again by Lemma \ref{height_power} we have $\h(g_i(x',x_n))\leq \h(g(x',x_n^{p^e}))$. Moreover by Lemma \ref{height_preparation} $\h(P)\leq 2d\h(f)^{d+1}$. Thus we obtain (by using Lemma \ref{height_extraction} and since $D\leq d$, $p^e\leq d$ and $d\leq \h(f)$ by Lemma \ref{height_ord})
 \begin{equation}\begin{split} \h(r_{i,j})
 \leq 4d((2d\h(f)&^{d+1})!)^{d+1}(2d\h(f)^{d+1})^2\Deg(g)\times\\
 \\ \times\max\left\{d!\frac{d(d-1)}{2}\right.&(2d\h(f)^{d+1}))^{\frac{d(d-1)}{2}} ((2d\h(f)^{d+1})!)^{d!+2},\\ 
 &\left.p^{2e}p^{e(e+1)/2}4^{p^e}\Deg(g)^{2p^e\Deg(g)+5p^e}\left(\h(g)+\frac{p^e(p^e-1)}{2}\right)\right\}\end{split}\end{equation}
 $$\leq (2\h(f))^{(2\h(f))^{O(d)}}\Deg(g)^{2d(\Deg(g)+2)}(\h(g)+1).$$
 Finally, since 
 $$g_i(x',x_n^{p^e})=P\,q_i(x',x_n^{p^e})+\sum_{j=0}^{D-1}r_{i,j}(x')x_n^{jp^e},$$
 then
 $$r=\sum_{i=0}^{p^e-1}\sum_{j=0}^{D-1}r_{i,j}(x')x_n^{jp^e+i}$$
 by unicity of the remainder in the Weierstrass division. Thus 
 Lemma  \ref{inequalities_height} \eqref{2} shows
 $$\h(r)\leq p^eD\cdot\Deg(r_{i,j}(x'))^{p^eD}\max_{i,j}\{\h(r_{i,j}(x'))+jp^e\}.$$
  Moreover 
 $$\Deg(r_{i,j}), \Deg(r)\leq \h(f)!\Deg(g_i)^D \ \ \ \forall i,j$$
 since $r_{i,j}$ and $r\in\k(x',\a_1,\cdots,\a_D,g_i(x',\a_1),\cdots,g_i(x',\a_D))$ (as shown in the proof of Lemma \ref{height_division1}). Hence
$$\h(r)\leq  (2\h(f))^{(2\h(f))^{O(d)}}\Deg(g)^{O(d\,\Deg(g))}(\h(g)+1). $$
 \end{proof}
 
 We will use at several places this basic lemma:

 \begin{lemma}\label{lemma_tech}
 For  any $\e>0$, $a>0$ and $d\in\N$ we have
 $$(2d)^{(2d)^{ad}}\leq 2^{2^{O(d^{1+\e})}}.$$
 \end{lemma}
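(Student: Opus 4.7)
The plan is to reduce the double-exponential inequality to a completely elementary logarithmic comparison by taking $\log_2$ twice. Unwinding the $O$-notation, what one needs to produce is a constant $C = C(a,\e) > 0$ such that
$$(2d)^{(2d)^{ad}} \leq 2^{2^{C d^{1+\e}}} \qquad \text{for all } d \in \N_{\geq 1}.$$
Applying $\log_2$ twice to both sides, this is equivalent to the assertion
$$a d \, \log_2(2d) + \log_2 \log_2(2d) \leq C\, d^{1+\e},$$
valid for all $d$ large enough (finitely many small values being absorbed into $C$ after it is chosen). So the entire lemma collapses to proving this single estimate.

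The key step, and really the only content, is the classical asymptotic fact that $\log_2(2d) = o(d^{\e})$ for every fixed $\e > 0$. This gives immediately $a d \log_2(2d) = o(d^{1+\e})$, while the iterated-logarithm term $\log_2 \log_2(2d)$ is of strictly smaller order. Summing the two yields the desired bound with any constant $C$ strictly larger than the limsup of the ratio of the left-hand side by $d^{1+\e}$ (which is $0$), so for instance any $C \geq 1$ works once $d$ is sufficiently large, and then one enlarges $C$ to cover the remaining small values of $d$.

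There is no real obstacle: the lemma is designed purely as a repackaging step, so that the bounds of the form $(2\h(f))^{(2\h(f))^{O(d)}}$ appearing in Lemmas \ref{height_division1} and \ref{height_division2} can be rewritten in the cleaner tower-of-two form $2^{2^{O(\h(f)^{1+\e})}}$ used later in the proof of Theorem \ref{main}. The only care needed is to track that the implicit constant in the conclusion depends on both $\e$ and $a$, which matches the formulation of the lemma.
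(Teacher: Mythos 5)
Your proof is correct and follows essentially the same route as the paper's: take logarithms of both sides twice, reduce to the polynomial-versus-$d\log d$ comparison, and absorb small $d$ into the constant. The paper uses $\ln$ rather than $\log_2$, which is purely cosmetic; otherwise the two arguments coincide.
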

 \begin{proof}
 Let $a>0$ and $\e>0$. 
There exists a constant $C>0$ such that for any $d$ large enough we have:
 $$ad\ln(2d)+\ln(\ln(2d))\leq C\ln(2)d^{1+\e}+\ln(\ln(2)).$$
 Thus
 $$ (2d)^{ad}\ln(2d)\leq \ln(2)2^{Cd^{1+\e}}$$
 and 
 $$(2d)^{(2d)^{ad}} \leq 2^{2^{Cd^{1+\e}}}.$$
 \end{proof}

 
 \begin{theorem}[Weierstrass Division Theorem]\label{division_sep}
 Let $\k$ be a field. Let $f$ be an algebraic power series which is $x_n$-regular of order $d$. Let $g$ be an algebraic power series. Then there exist unique algebraic power series $q$ and $r$ such that $r\in\k\lag x'\rag[x_n]$ is of degree $<d$ in $x_n$:
 $$r=r_0+r_1x_n+\cdots r_{d-1}x_n^{d-1},\ \ r_i\in\k\lag x'\rag \ \ \forall i$$ 
 and
  $$g=fq+r.$$
  Moreover we have the following bounds (for any $\e>0$):
  \begin{itemize}
  \item[i)] if  char$(\k)=0$:
 $$\h(r)\leq2^{2^{O(\h(f)^{1+\e})}}\Deg(g)^{d^4+d^3+6d^2-5d+3}(\h(g)+1),$$
 $$\h(r_i)\leq 2^{2^{O(\h(f)^{1+\e})}}\Deg(g)^{O(d^4) }(\h(g)+1)\ \ \ \forall i,$$
 $$\h(q)\leq  2^{2^{O(\h(f)^{1+\e})}}\Deg(g)^{d^4+d^3+6d^2-3d+5 }\Deg(f)(\h(g)+1).$$

\item[ii)] if $\cha(\k)>0$:
$$\h(r)\leq2^{2^{O(\h(f)^{1+\e})}}\Deg(g)^{O(d^4\Deg(g)^4)}(\h(g)+1),$$
$$\h(r_i)\leq 2^{2^{O(\h(f)^{1+\e})}}\Deg(g)^{O(d^4\Deg(g)^4) }(\h(g)+1)\ \ \ \forall i,$$
 $$\h(q)\leq  2^{2^{O(\h(f)^{1+\e})}}\Deg(g)^{O(d^4\Deg(g)^4)}\Deg(f)(\h(g)+1).$$

\end{itemize}
In both cases we have 
$$\Deg(r)\leq\h(f)!\Deg(g)^d,$$
$$\Deg(r_i)\leq \h(f)!\Deg(g)^d \ \ \forall i,$$ 
$$\Deg(q)\leq \h(f)!\Deg(g)^{d+1}\Deg(f).$$\\
 \end{theorem}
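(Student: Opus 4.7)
The plan is to reduce to division by the Weierstrass polynomial of $f$ and then iterate the single-irreducible-factor cases handled by Lemmas \ref{height_division1} and \ref{height_division2}. By Lemma \ref{height_preparation}, write $f = uP$ where $u \in \k\lag x\rag$ is a unit and $P \in \k\lag x'\rag[x_n]$ is a Weierstrass polynomial of degree $d$ in $x_n$, with $\Deg(P) \leq \h(f)!$ and $\h(P) \leq 2d\,\h(f)^{d+1}$. Since $u^{-1}$ is algebraic with height and degree controlled by those of $u$ (itself controlled by $u = f/P$ and Lemma \ref{inequalities_height}), it suffices to divide $g$ by $P$: if $g = Pq' + r$ then $q := u^{-1}q'$ yields $g = fq + r$, and the bounds on $(\h(q),\Deg(q))$ follow from $(\h(q'),\Deg(q'))$ via Lemma \ref{inequalities_height}.

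Next, factor $P = P_1 \cdots P_s$ into irreducible factors in $\k\lag x'\rag[x_n]$ (with multiplicities), each $P_i$ being a Weierstrass polynomial of degree $d_i$ in $x_n$, with $\sum_i d_i = d$. Divide iteratively, using Lemma \ref{height_division1} when $P_i$ has $d_i$ distinct roots in $\K$ (automatic in characteristic zero) and Lemma \ref{height_division2} otherwise: write $g = P_1 q^{(1)} + r^{(1)}$ with $\deg_{x_n}(r^{(1)}) < d_1$; then $q^{(1)} = P_2 q^{(2)} + r^{(2)}$; and so on. After $s$ steps,
\[
g = P\,q^{(s)} + \sum_{i=1}^s P_1 \cdots P_{i-1}\,r^{(i)},
\]
and the sum has $x_n$-degree strictly less than $d_1 + \cdots + d_s = d$. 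By uniqueness of Weierstrass division in $\k\lb x\rb$, this sum is the desired $r$, and $q := u^{-1}q^{(s)}$.

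The main obstacle is propagating the height bounds through $s \leq d$ iterations and the final reconstruction of $r$. Each $P_i$ inherits bounds from $P$ (hence from $f$ via Lemma \ref{height_preparation}); feeding these into Lemma \ref{height_division1} or \ref{height_division2} produces terms of the form $(\h(f)!)^{d!}\,\h(f)^{O(d^2)}$ which Lemma \ref{lemma_tech} absorbs into $2^{2^{O(\h(f)^{1+\e})}}$. The iterative quotient $q^{(i)}$ inherits a height bounded by a polynomial in $\Deg(g)$ of degree $O(d^3)$ from one iteration to the next, which explains the $\Deg(g)^{O(d^4)}$ factor in characteristic zero after summing over the $s$ iterations. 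In positive characteristic, each step through an inseparable factor invokes Lemma \ref{height_division2} instead, which injects an extra $\Deg(g)^{O(\Deg(g))}$ at that step; iterating gives the $\Deg(g)^{O(d^4\,\Deg(g)^4)}$ estimate. Finally, Lemma \ref{inequalities_height} (ii)--(vi) assembles the sum $r = \sum_i P_1 \cdots P_{i-1}\, r^{(i)}$, from which the bound on each component $r_j$ follows, and then $q = u^{-1}q^{(s)}$ produces the stated bounds on $q$.
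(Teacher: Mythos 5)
Your proposal is correct and follows essentially the same route as the paper's proof: Weierstrass preparation to replace $f$ by its Weierstrass polynomial $P$, factorization of $P$ into irreducible Weierstrass polynomials, iterated division using Lemma~\ref{height_division1} (separable factors) and Lemma~\ref{height_division2} (inseparable factors in positive characteristic), telescoping to recover $r = \sum_i P_1\cdots P_{i-1}r^{(i)}$ and $q$, and absorption of the factorial/exponential overhead via Lemma~\ref{lemma_tech}. The only cosmetic difference is that the paper bounds $\h(q)$ via $q=(g-r)/f$ rather than by estimating $u^{-1}$ directly, but both work.
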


\begin{proof}
Let us write $f=u.P$ where $u$ is a unit and $P$ a Weierstrass polynomial in $x_n$. Let us decompose $P$ into the product of irreducible Weierstrass polynomials
$$P=P_1\cdots P_s.$$
Let us consider the following Weierstrass divisions:
$$g=P_1Q_1+R_1$$
$$Q_1=P_2Q_2+R_2$$
$$\cdots\cdots$$
$$Q_{s-1}=P_sQ_s+R_s.$$
Then
$$g=P_1\cdots P_sQ_s+R_1+P_1R_2+P_1P_2R_3+\cdots+P_1\cdots P_{s-1}R_s.$$
Thus, by unicity of the Weierstrass division, we have
$$u\cdot q=Q_s,$$
$$r:=R_1+P_1R_2+P_1P_2R_3+\cdots+P_1\cdots P_{s-1}R_s$$
are the quotient and the the remainder of the division of $g$ by $P$.\\
Here $s\leq d$ since $P$ is monic of degree $d$ in $x_n$. Let $d_i$ be the degree in $x_n$ of the polynomial $P_i$ for $1\leq i\leq s$. Let us choose $1\leq i\leq s$ and let us denote by $\a_1$, $\cdots$, $\a_{d_i}\in\K$ the roots of $P_i$.\\
First let us prove the lemma when   char$(\k)=0$. In this case these roots are distinct. Then

$$P_i=\prod_{i=1}^{d_i}(x_n-\a_i).$$
We have $\h(x_n-\a_i)\leq \h(\a_i)+\Deg(\a_i)\leq 2\h(f)$ (by Lemma \ref{inequalities_height} \eqref{3}) and $\Deg(x_n-\a_i)=\Deg(\a_i)\leq \h(f)$. Then, by Lemma \ref{inequalities_height} \eqref{5} and since $d_i\leq d\leq \h(f)$ (by Lemma \ref{height_ord}), we have
$$\h(P_i)\leq d_i\h(f)^{d_i}\cdot 2\h(f)\leq 2\h(f)^{\h(f)+2}.$$
Moreover
$$\Deg(P_i)\leq \h(f)!$$
since $P_i$ is in the extension of $\k(x)$ generated  by the roots of $f$.\\
\\
 \noindent Exactly as in the proof of Lemma \ref{height_division1} we have
 $$R_i\in\k(x,\a_1,\cdots,\a_d, Q_{i-1}(x',\a_1),\cdots, Q_{i-1}(x',\a_d)).$$
 Since $Q_{i-1}=\frac{Q_{i-2}-R_{i-1}}{P_{i-1}}$ we obtain, by induction,
 $$Q_{i-1}(x',\a_k)\in\k(x',\a_1,\cdots,\a_d,Q_{i-2}(x',\a_1),\cdots, Q_{i-2}(x',\a_d))$$
 thus
 \begin{equation}\label{extension}R_i, Q_i, P_i\in\k(x,\a_1,\cdots,\a_d,g(x',\a_1),\cdots,g(x',\a_d)) \ \ \forall i\end{equation}
 and 
 $$\Deg(R_i), \Deg(Q_i), \Deg(r), \leq \h(f)!\Deg(g)^d\ \ \ \forall i$$
 by Corollary \ref{aschen}. Since $q=\frac{g-r}{f}$, then 
 $$q\in \k(x,\a_1,\cdots,\a_d,g(x',\a_1),\cdots,g(x',\a_d),g,f)$$ 
 and $\deg(q)\leq \h(f)!\Deg(g)^{d+1}\Deg(f)$.
 Thus the inequalities on the degrees are proven.\\
\\
\\
Let $\e$ be a positive real number.
By Lemma \ref{height_division1}  the height of $R_1$ is bounded by
$$d_1(\h(P_1)!\Deg(g)^{d_1})^{d_1}(4\h(P_1)^{\h(P_1)^{O(d_1)}}\Deg(g)(\h(g)+1)+d_1-1)$$
and so we obtain

\begin{equation}\label{R1}\h(R_1)\leq 2^{2^{O(\h(f)^{1+\e})}}\cdot \Deg(g)^{d^2+1}(\h(g)+1) \end{equation}
by Lemma \ref{lemma_tech} since $\h(P_1)\leq 2\h(f)^{\h(f)+2}$ and $d_1\leq d\leq \h(f)$. \\
\\
By  Lemma \ref{inequalities_height} \eqref{2} and \eqref{5} we have
 \begin{equation*}\begin{split}\h(Q_1)=\h\left(\frac{g-R_1}{P_1}\right)\leq 2\Deg(P_1)&\Deg(g-R_1)\times\\
 \times \max\{\h(P_1),2&\Deg(g)\Deg(R_1)\max\{\h(g),\h(R_1)\}\}\end{split}\end{equation*}
 
 $$\leq 4\h(f)!\Deg(g)^2\Deg(R_1)^2\max\{\h(P_1),\h(g),\h(R_1)\}$$\\
 since $\Deg(P_1)\leq \h(f)!$ and $d\leq \h(f)$. Hence by Lemma \ref{lemma_tech} and the bound \eqref{R1} on $\h(R_1)$ we obtain
 
 \begin{equation}\label{Q1}\h(Q_1)\leq 2^{2^{O(\h(f)^{1+\e})}}\Deg(g)^{d^2+2d+3}(\h(g)+1).\end{equation}\\
 \\
    Still by Lemma \ref{height_division1}, and as we have shown for $\h(R_1)$, we have 
    
 \begin{equation}\label{Ri}\h(R_i)\leq 2^{2^{O(\h(f)^{1+\e})}}\Deg(Q_{i-1})^{d^2+1}(\h(Q_{i-1})+1),\end{equation}
 and by Lemma \ref{inequalities_height} \eqref{2} and \eqref{5}, and as we have done for $\h(Q_1)$, we have
 
 \begin{equation*}\begin{split}\h(Q_i)\leq 2\Deg(P_i)\Deg(Q_{i-1}-&R_i)\times\\
\times \max\{\h(P_i),2&\Deg(R_i)\Deg(Q_{i-1})\max\{\h(R_i),\h(Q_{i-1})\}\}\end{split}\end{equation*}
 $$\leq 4\h(f)!\Deg(Q_{i-1})^2\Deg(R_i)^2\max\{\h(P_i),\h(Q_{i-1}),\h(R_i)\}$$
 $$\leq 4(\h(f)!)^5\Deg(g)^{4d}\max\{\h(P_i),\h(Q_{i-1}),\h(R_i)\}.$$
 The previous bound \eqref{Ri} on $\h(R_i)$ gives 
 
 $$\h(Q_i)\leq 2^{2^{O(\h(f)^{1+\e})}}\Deg(g)^{4d}\Deg(Q_{i-1})^{d^2+1}(\h(Q_{i-1})+1).$$
 Since $d\leq \h(f)$, $\Deg(Q_i)\leq \h(f)!\Deg(g)^d$ for $i$, and by using the bound \eqref{Q1} on $\h(Q_1)$, we obtain by induction on $i$
 
   $$\h(Q_i)\leq 2^{2^{O(\h(f)^{1+\e})}}\Deg(g)^{(d^3+d^2+4d)(i-1)+d^2+2d+3}(\h(g)+1) \ \ \ \ \forall i\geq 1.$$
 Thus the bound \eqref{Ri} gives 
 
 \begin{equation}\label{Ri2}\h(R_i)\leq 2^{2^{O(\h(f)^{1+\e})}}\Deg(g)^{(d^3+d^2+4d)i+d^2-6d+3  }(\h(g)+1) \ \ \ \forall i\geq 2. \end{equation}
 \\
By Lemma \ref{inequalities_height}  \eqref{5}, for all $i\geq 2$

   \begin{equation*}\begin{split}\h(P_1\cdots P_{i-1}R_i)\leq  i \Deg(P_1)\cdots\Deg(P_{i-1})\Deg(R_i)&\times\\
   \max&\{\h(P_1),\cdots,\h(P_{i-1}),\h(R_i)\}\end{split}\end{equation*}

$$\leq i(\h(f)!)^i\Deg(g)^d\max\{\h(P_1),\cdots,\h(P_{i-1}),\h(R_i)\}$$

$$\leq 2^{2^{O(\h(f)^{1+\e})}}\Deg(g)^{(d^3+d^2+4d)i+d^2-5d+3  }(\h(g)+1)$$
by \eqref{Ri2}.
We have $P_i$ and $R_i\in k(x,\a_1,\cdots,\a_d,g(x(,\a_1),\cdots,g(x',\a_d))$ for all $i$, then

$$\Deg(P_1\cdots P_{i-1}R_i)\leq \h(f)!\Deg(g)^d\ \ \ \forall i.$$
\\
  Thus by Lemma \ref{inequalities_height} \eqref{2} we obtain
  
  $$\h(r)\leq s (\h(f)!\Deg(g)^d)^s \cdot 2^{2^{O(\h(f)^{1+\e})}}\Deg(g)^{(d^3+d^2+4d)s+d^2-5d+3  }(\h(g)+1)$$
  
  $$\leq 2^{2^{O(\h(f)^{1+\e})}}\Deg(g)^{d^4+d^3+6d^2-5d+3  }(\h(g)+1)$$
  since $s\leq d$ and $d\leq \h(f)$.
  \\
   Thus by Lemma \ref{inequalities_height} \eqref{2} and \eqref{5} 
  \begin{equation*}\begin{split}\h(q)=\h\left(\frac{g-r}{f}\right)\leq 2\Deg(g-r)&\Deg(f)\\
  \max&\left\{\h(f),2\Deg(g)\Deg(r)\max\{\h(g),\h(r)\}\right\}\end{split}\end{equation*}
$$\leq 4\Deg(g)^2\Deg(r)^2\Deg(f)\cdot 2^{2^{O(\h(f)^{1+\e})}}\Deg(g)^{d^4+d^3+6d^2-5d+3  }(\h(g)+1)$$
$$\leq 2^{2^{O(\h(f)^{1+\e})}}\Deg(g)^{d^4+d^3+6d^2-3d+5 }\Deg(f)(\h(g)+1).$$
\\
If we write 
$r(x)=r_0(x')+r_1(x')x_n+\cdots+r_{d-1}(x')x_n^{d-1}$ we have
$$r_0(x')=r(x',0)$$
and
\begin{equation}\label{indri}r_{i+1}(x')=\left(\frac{r-(r_0+r_1x_n+\cdots+r_ix_n^i)}{x_n^i}\right)(x',0)\ \ \ \forall i\geq 0.\end{equation}
In particular, from \eqref{extension}, we have
$$r_i\in \k(x',\a_1,\cdots,\a_d,g(x',\a_1),\cdots,g(x',\a_d))\ \ \forall i$$
hence $\Deg(r_i)\leq \h(f)!\Deg(g)^d$ for all $i$ by Corollary \ref{aschen}.\\
From \eqref{indri}, Lemma \ref{height_ord} and Lemma \ref{inequalities_height} \eqref{2} we obtain
$$\h(r_{i+1})\leq \h\left(\frac{r-(r_0+r_1x_n+\cdots+r_ix_n^i)}{x_n^i}\right)$$
$$=\h\left(\frac{r}{x_n^i}-\frac{r_0}{x_n^i}-\cdots-\frac{r_{i-1}}{x_n}-r_i\right)$$
$$\leq (i+2)\Deg(r)\Deg(r_0)\cdots\Deg(r_i)\max\{\h(r)+i,\h(r_0)+i,\cdots,\h(r_{i-1})+1,\h(r_i)\}$$
$$\leq (d+1)(\h(f)!\Deg(g)^d)^{d+1}\left(\max\{\h(r),\h(r_0),\cdots,\h(r_{i-1}),\h(r_i)\}+d\right).$$
Thus, by induction on $i$ and using the bound on $\h(r)$ proven above, we see that
$$\h(r_i)\leq 2^{2^{O(\h(f)^{1+\e})}}\Deg(g)^{O(d^4) }(\h(g)+1)\ \ \ \forall i.$$\\
\\
 In the case $\cha(\k)=p>0$ the proof is completely similar  using Lemma \ref{height_division2} instead of Lemma \ref{height_division1} so we skip the details.
\end{proof}

\vspace{0.2cm}

\begin{rmk}
  We could prove directly the Weierstrass Division Theorem from the Weierstrass Preparation Theorem as done in \cite{CL}. But this would give a bound on the height of the remainder which is not linear in $\h(g)$. This linear bound in $\h(g)$ is exactly what we need to prove Theorem \ref{main}.
  \end{rmk}
  
%
%

\section{Ideal membership problem in localizations of polynomial rings}\label{S_IMP}
Before bounding the complexity of the Ideal Membership Problem in the ring of algebraic power series we review this problem in the ring of polynomials and give extensions to localizations of the ring of polynomials that may be of independent interest.\\
\\
Let $\k$ be a field and $x:=(x_1,\cdots,x_n)$. The following theorem is well known (such a result has first been proven by  G. Hermann \cite{H} but a modern and correct proof is given in the appendix of \cite{MM}):

\begin{theorem}\label{IMP}\cite{H}\cite{MM}
Let $\k$ be a infinite field.
Let $M$ be a submodule of $\k[x]^q$ generated by vectors  $f_1$, $\cdots$, $f_p$ whose components are polynomials of degrees less than $d$. Let $f\in\k[x]^q$. Then $f\in M$ if and only if there exist $a_1$, $\cdots$, $a_p\in\k[x]$ of degrees $\leq \deg(f)+(pd)^{2^n}$ such that 
$$f=a_1f_1+\cdots+a_pf_p.$$
\end{theorem}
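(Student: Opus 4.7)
My plan is to prove this classical bound by induction on the number of variables $n$. The base case $n = 0$ is trivial: $\k[x] = \k$ is a field, so membership of $f$ in $M$ reduces to a pure linear algebra question in $\k^q$, solved by constants $a_i \in \k$.

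For the inductive step I treat the module case and the ideal case in parallel (the argument is componentwise the same, absorbing a bookkeeping factor of $q$). Since $\k$ is infinite, I first perform a generic $\k$-linear change of coordinates in $(x_1,\ldots,x_n)$ so that at least one generator, say $f_1$, has its top-degree homogeneous part containing $x_n^d$ with a nonzero coefficient; after rescaling, I may assume $f_1$ is monic of degree $d$ in $x_n$ with coefficients in $\k[x']$, where $x' := (x_1,\ldots,x_{n-1})$. Performing polynomial division in $x_n$ by $f_1$ on each of $f_2,\ldots,f_p$, I replace them by remainders $r_i$ with $\deg_{x_n}(r_i) < d$; these together with $f_1$ generate the same submodule $M$.

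Given any decomposition $f = a_1 f_1 + \sum_{i \geq 2} a_i f_i$, I divide each $a_i$ (for $i \geq 2$) by $f_1$ in $x_n$ and absorb the quotient into the coefficient of $f_1$, obtaining $f = \tilde a_1 f_1 + \sum_{i\geq 2} \tilde a_i r_i$ with $\deg_{x_n}(\tilde a_i) < d$ for $i \geq 2$. Matching $x_n$-degrees on both sides forces $\deg_{x_n}(\tilde a_1) \leq \deg(f) + O(d)$. Expanding $\tilde a_1$, the $\tilde a_i$, $f_1$, the $r_i$, and $f$ in powers of $x_n$ and equating coefficients yields a module membership problem in $\k[x']$: the unknowns are the $\k[x']$-coefficients of $\tilde a_1$ and of the $\tilde a_i$, the new generators are shifted copies of the $\k[x']$-coefficients of $f_1$ and of the $r_i$ (roughly $\deg(f) + pd$ of them, each of $x'$-degree at most $d$), and the right-hand side is a tuple of $\k[x']$-coefficients of $f$.

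Applying the induction hypothesis in $n-1$ variables yields a solution whose components have $x'$-degree at most $\deg(f) + (p'd')^{2^{n-1}}$ with $p'd' = O(pd)$, and re-assembling the $\tilde a_i$ from their $x_n$-expansions produces a solution in $\k[x]$ of total degree $\leq \deg(f) + (pd)^{2^n}$, the $\deg(f)$ contribution entering linearly rather than through the recursion. The main obstacle I expect is the careful bookkeeping to verify that the growth of the parameters $(p,d)$ at each inductive step really does telescope into the clean bound $(pd)^{2^n}$, so that the constants absorbed at each stage do not destroy the double-exponential shape; a secondary issue is the appeal to a generic linear change of coordinates, which genuinely requires the hypothesis that $\k$ is infinite (in the finite case one would pass to an infinite extension, apply the argument, and descend the bound).
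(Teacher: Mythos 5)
The paper does not actually prove Theorem~\ref{IMP}: it cites Hermann \cite{H} and the corrected account in the appendix of Mayr--Meyer \cite{MM}. The classical proof there proceeds via linear algebra over $\k[x]$ -- one selects a maximal nonsingular minor, applies Cramer-type formulas to control the ``free'' part of the solution, and recurses -- rather than via a Weierstrass/Gr\"obner-style elimination of $x_n$ as you propose. So your route is genuinely different, and it needs to stand on its own merits.

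There are two places where it does not yet do so.
First, and most seriously, your own accounting contradicts your claimed bound: you say the reduced problem in $\k[x']$ has ``roughly $\deg(f)+pd$'' generators of $x'$-degree $\leq d$, but then assert $p'd' = O(pd)$. Those are incompatible. If you feed $p' \approx \deg(f)+pd$ into the induction, the right-hand side becomes $\deg f + \bigl((\deg f + pd)d\bigr)^{2^{n-1}}$, which is super-linear in $\deg f$ and does not telescope to $\deg(f) + (pd)^{2^n}$. This is not bookkeeping; the shape of the recursion is wrong as stated. The fix is to \emph{eliminate} the unknown $\tilde a_1$ entirely before descending to $n-1$ variables: since $f_1$ is monic in $x_n$ of degree $d_1$, the coefficients of $\tilde a_1$ are determined by back-substitution from the top $x_n$-coefficient downwards, and one is left with exactly $d_1$ residual constraints (the vanishing of the remainder of $f - \sum_{i\geq 2}\tilde a_i r_i$ upon Weierstrass division by $f_1$). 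These are $d_1$ equations in $\k[x']$ with unknowns $B_{i,j}$ ($i\geq 2$, $0\leq j<d_1$), so $(p-1)d_1 = O(pd)$ unknowns, generators of $x'$-degree $O(d)$, and a right-hand side of degree $\leq \deg f$ (using that division by a polynomial monic in $x_n$ whose $x_n$-degree equals its total degree is degree-nonincreasing). Only after this elimination does the parameter in the recursion stay $O(pd)$ and the $\deg f$ contribution enter linearly. You need to carry out this elimination explicitly; your current presentation keeps $\tilde a_1$ as an unknown, and the count of generators then depends on $\deg f$.

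Second, the module case is not ``componentwise the same.'' For vector generators $f_i\in\k[x]^q$ the pivoting is on a single entry, say $f_1^{(1)}$; reducing the other $f_i^{(1)}$ modulo $f_1^{(1)}$ changes the remaining $q-1$ components of the $f_i$, and $a_1$ does not drop out of those equations. This needs a separate argument; the standard device is the idealization trick that the paper itself uses when passing from $q=1$ to general $q$ in the proof of Theorem~\ref{IMPlocal} (encode $M\subset\k[x]^q$ in the ring $\k[x,y]/(y)^2$, reduce to an ideal in $n+q$ variables). Absorbing ``a bookkeeping factor of $q$'' is not a substitute for that reduction.

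Your identification of the generic-coordinate step (requiring $\k$ infinite) and the basic structure of the induction are on track, and the degree-preservation remark about Weierstrass division is the right technical tool. But as written the inductive bookkeeping is actually incorrect rather than merely unfinished, and the module case is asserted rather than reduced.
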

\vspace{0.3cm}
\noindent
If we work over the local ring $\k[x]_{(x)}$ the situation is a bit different. Saying that $f\in\k[x]^q$ is in $\k[x]_{(x)}M$ is equivalent to say that there exist polynomials $a_1$, $\cdots$, $a_p$ and $u$, $u\notin (x)$, such that 
\begin{equation}\label{uf} uf=a_1f_1+\cdots+a_pf_p.\end{equation}
There exists an analogue of Buchberger algorithm to compute Gr\"obner basis in local rings  introduced by T. Mora  \cite{Mo} but it does not give effective bounds on the degrees of the $a_i$. We can also do the following:\\
Saying that \eqref{uf} is satisfied is equivalent to say that there exist polynomials $a_1$, $\cdots$, $a_p$, $b_1$,.., $b_n$ such that
$$f=a_1f_1+\cdots+a_pf_p+b_1x_1f+\cdots+b_nx_nf.$$
In this case $u=1-\sum_ix_ib_i$.\\
Thus by applying Theorem \ref{IMP}, we see that $f\in \k[x]_{(x)}M$ if and only if \eqref{uf} is satisfied for polynomials $u$, $a_1$, $\cdots$, $a_p$ of degrees $\leq \deg(f)+(\left(p+n)\max\{d,\deg(f)+1\}\right)^{2^n}$. But this bound is not linear in $\deg(f)$ any more, which may be interesting if $f_1$, $\cdots$, $f_p$ are fixed and $f$ varies.\\
Nevertheless we can prove the following result:
\vspace{0.2cm}
\begin{theorem}\label{IMPlocal}
For any $n$, $q$ and $d\in\N$ there exists an integer $\g(n,q,d)$ such that $\g(n,q,d)=(2d)^{2^{O(n+q)}}$ and satisfying the following property:\\
Let $\k$ be an infinite field, $M$ be a submodule of $\k[x_1,\cdots,x_n]^q$ generated by vectors $f_1$, $\cdots$, $f_p$ of degree $\leq d$ and let $f\in\k[x]^q$. Let $P$ be a prime ideal of $\k[x]$. Then $f\in \k[x]_{P}M$ if and only if there exist polynomials $a_1$, $\cdots$, $a_p$ of degrees $\leq \deg(f)+\g(n,q,d)$ and $u$, $u\notin P$, of degree $\leq \g(n,q,d)$ such that
$$uf=a_1f_1+\cdots+a_pf_p.$$
\end{theorem}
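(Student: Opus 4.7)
The idea is to work with the colon ideal $I := (M : f) = \{u \in \k[x] : uf \in M\}$, which reformulates the statement as $f \in \k[x]_P M \iff I \not\subset P$. My plan is: assuming $I \not\subset P$, produce an element $u \in I \setminus P$ of degree bounded by some $\gamma(n,q,d) = (2d)^{2^{O(n+q)}}$ uniform in $P$ and $f$, and then apply Theorem \ref{IMP} to the membership $uf \in M$. Note that we may assume $p \leq q\binom{n+d}{n} = (qd)^{O(n)}$ by extracting a minimal generating set from the $f_i$; with this reduction Theorem \ref{IMP} yields $a_i$ of degree $\leq \deg(uf) + (pd)^{2^n} \leq \deg f + \gamma(n,q,d)$ after absorbing the additive constant into $\gamma$, which keeps it of the same asymptotic shape.

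To construct $u$ I would invoke effective primary decomposition in $\k[x]^q$ (in the spirit of Dickenstein--Fitchas--Giusti--Sessa, Krick--Logar, and their routine extension from ideals to submodules): write
$$M = M_1 \cap \cdots \cap M_r,$$
with each $M_j$ being $Q_j$-primary for a prime $Q_j \subset \k[x]$, the $Q_j$ generated in degree $\leq \delta_1$, the primary exponents $n_j$ (smallest integers with $Q_j^{n_j} \k[x]^q \subset M_j$) bounded by $\delta_2$, and the number of components $r$ bounded by $\delta_3$, where $\delta_1,\delta_2,\delta_3 = d^{2^{O(n+q)}}$. A standard localization argument then yields the identification
$$\k[x]_P M \cap \k[x]^q \;=\; \bigcap_{\,Q_j \subset P} M_j,$$
since for $Q_j \not\subset P$ one has $(M_j)_P = \k[x]_P^q$, while for $Q_j \subset P$ the primary condition gives $uf \in M_j \Leftrightarrow f \in M_j$ for any $u \notin P$.

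Granting $f \in \k[x]_P M$, for each $j$ with $Q_j \not\subset P$ effective prime avoidance---applicable because $\k$ is infinite---furnishes $v_j \in Q_j \setminus P$ of degree $\leq \delta_1$. Set
$$u := \prod_{j:\, Q_j \not\subset P} v_j^{n_j}.$$
Primality of $P$ gives $u \notin P$. For each $j$ with $Q_j \not\subset P$ the factor $v_j^{n_j} \in Q_j^{n_j} \subset (M_j : \k[x]^q)$ forces $u\,\k[x]^q \subset M_j$, whence $uf \in M_j$; for the remaining $j$, $f \in M_j$ by the identification above, so $uf \in M_j$ as well. Therefore $uf \in M$ with $\deg u \leq r \cdot \delta_1 \cdot \delta_2 \leq (2d)^{2^{O(n+q)}}$, and feeding this into Theorem \ref{IMP} produces the claimed $a_i$.

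The hardest ingredient is the quantitative input on primary decomposition---in particular the bound on the primary exponents $n_j$ (for which one may alternatively appeal to effective Nullstellensatz applied to $(M_j : \k[x]^q)$) and the bound on the number of associated primes---together with the technical, but essentially routine, upgrade of these bounds from ideals in $\k[x]$ to submodules of $\k[x]^q$. Once these are in place, the argument is a combination of prime avoidance, the primary localization identification of $\k[x]_P M$, and Hermann's theorem.
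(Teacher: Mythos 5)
Your proposal is correct in outline, but it takes a genuinely different route from the paper's proof.

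The paper first reduces the module case $q>1$ to the ideal case $q=1$ by passing to the idealization $\k[x,y]/(y_1,\dots,y_q)^2$, so that all the quantitative primary-decomposition input is only needed for \emph{ideals} (Seidenberg's effective bounds). In the ideal case, after writing an irredundant primary decomposition $I=Q_1\cap\dots\cap Q_s$ with $Q_1,\dots,Q_r\subset P$ and $Q_i\not\subset P$ for $i>r$, the paper sets $J=Q_1\cap\dots\cap Q_r$ and $J'=Q_{r+1}\cap\dots\cap Q_s$; it writes $f=\sum_i c_i g_i$ with $g_i$ generators of $J$ of controlled degree (Hermann applied inside $J$), chooses $u$ to be a bounded-degree \emph{generator} of $J'$ lying outside $P$ (such a generator exists since $J'\not\subset P$ and $P$ is an ideal), uses that $u\,g_i\in J\cap J'=I$ to write $u\,g_i=\sum_j b_{i,j}f_j$ with bounded $b_{i,j}$, and assembles $uf=\sum_j\bigl(\sum_i c_i b_{i,j}\bigr)f_j$. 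You instead keep the primary decomposition at the level of the \emph{submodule} $M=\bigcap M_j$, and manufacture $u=\prod_{Q_j\not\subset P}v_j^{n_j}$ from bounded-degree elements $v_j\in Q_j\setminus P$ (prime avoidance is not even needed here: as in the paper, some generator of $Q_j$ already avoids $P$) raised to the primary exponents $n_j$, so that $u\,\k[x]^q\subset M_j$ on those components while $f\in M_j$ on the components with $Q_j\subset P$; then a single application of Hermann to $uf\in M$ finishes. Both arguments are valid, but they need different effective ingredients: your route requires an effective primary decomposition \emph{for submodules} plus explicit bounds on the \emph{primary exponents} $n_j$ (e.g.\ via an effective Nullstellensatz for $(M_j:\k[x]^q)$), inputs the paper deliberately avoids by reducing to ideals through the idealization and by never needing the exponents $n_j$ at all — it suffices for the paper that $J$, $J'$, and the $Q_i$ admit generators of controlled degree. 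Conversely, your route avoids the paper's two-layer Hermann application and the manipulation of $J$ and $J'$, at the price of the extra quantitative input. One small point you correctly flag but the paper leaves implicit: one must reduce to $p\le q\binom{n+d}{n}$ generators by $\k$-linear elimination so that the final bound is independent of $p$.
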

\vspace{0.2cm}
\begin{proof}
Let  $R$ be the ring defined as follows (this is the idealization of $M$ - see \cite{Na}): the set $R$ is equal to $\k[x]\times \k[x]^q$ and we define the sum and the product as follows:
$$(p,f)+(p',f'):=(p+p',f+f')$$
$$(p,f).(p',f'):=(pp', pf'+p'f)\ \ \forall (p,f), (p',f')\in \k[x]\times\k[x]^q.$$
Let $I:=\{0\}\times M\subset R$. Then $I$ is an ideal of $R$ and it is generated by $(0,f_1)$, $\cdots$, $(0,f_q)$.\\
 Moreover $R$ is isomorphic to the  ring
$$R':=\frac{ \k[x_1,\cdots,x_n,y_1,\cdots,y_q]}{(y_1,\cdots,y_q)^2}$$
and the isomorphism $\s:R\lgw R'$ is defined as follows:\\
If $(p,f)\in R$, $f:=(f^{(1)},\cdots,f^{(q)})$, then $\s(p,f)$ is the image of $p+f^{(1)}y_1+\cdots+f^{(q)}y_q$ in $R'$.
\\
The image $I$ by $\s$ is  an ideal of $R'$ and we denote by $I'$ an ideal of $\k[x,y]$ whose image in $R'$ is equal to $\s(I)$.
Thus, by identifying $R$ and $R'$, we have the following equivalences:
$$f\in M \Longleftrightarrow (0,f)\in I \Longleftrightarrow f^{(1)}(x)y_1+\cdots+f^{(q)}(x)y_q\in I'+(y)^2.$$ \\
Let us assume that the theorem is proven when $q=1$. We will apply it when $M=I'+(y)^2$ is an ideal of $\k[x,y]$. If we write $f_i=(f_{i,1},\cdots, f_{i,q})$ for $1\leq i\leq p$  then $I'+(y)^2$ is generated by $\wdt{f}_1(x,y):=\sum_{j=1}^qf_{1,j}y_j$, $\cdots$, $\wdt{f}_p(x,y):=\sum_{j=1}^qf_{p,j}y_j$ and the $y_iy_j$ for $1\leq i\leq j\leq q$, whose degrees are less than $d+2$. Thus, by assumption,  there exist $u(x,y)$, $a_1(x,y)$, $\cdots$, $a_p(x,y)$, $a_{i,j}(x,y)$ for $1\leq i\leq j\leq $ with $u(0,0)\neq 0$ and such that 
\begin{equation}\label{eqIMP}u\left(f^{(1)}(x)y_1+\cdots+f^{(q)}(x)y_q\right)=\sum_{i=1}^pa_i\wdt{f}_i+\sum_{1\leq i\leq j\leq q}a_{i,j}y_iy_j\end{equation}
and 
$$\deg(a_k),\ \deg(a_{i,j})\leq \deg(f)+\g(n+q,1,d+2)$$
where $\g(n+q,1,d+2)\leq (2d)^{2^{O(n+q)}}$. By identifying the coefficients of $y_1$,..., $y_q$ of both sides of the Equality  \eqref{eqIMP} we obtain
$$u(x,0)f(x)=\sum_{i=1}^pa_i(x,0)f_i(x)$$ and this proves the theorem. Thus we only need to prove the theorem when $M=I$ is an ideal of $\k[x]$ (i.e. for $q=1$).\\
\\
Let $I=Q_1\cap\cdots \cap Q_s$ be an irredundant primary decomposition of $I$ in $\k[x]$. Let us assume that $Q_1$, $\cdots$, $Q_r\subset P$ and $Q_i\not\subset P$ for $i>r$. Then 
$$I\k[x]_{P}=Q_1\k[x]_{P}\cap\cdots\cap Q_r\k[x]_{P}$$
is an irredundant primary decomposition of $I\k[x]_{P}$ in $\k[x]_{P}$ (see Theorem 17, Chap. 4 \cite{SZ}). Let $J$ be the ideal of $\k[x]$ defined by $J=Q_1\cap\cdots\cap Q_r$. Obviously $I\k[x]_{P}=J\k[x]_{P}$ and moreover for any $f\in\k[x]$, $f\in J\k[x]_{P}$ if and only if $f\in J$. \\
\\
If $r=s$, then $I=J$ and for every $f\in\k[x]$, $f\in I\k[x]_P$ if and only if $f\in I$. So this case is exactly Theorem \ref{IMP}.\\
\noindent
In the general case $r<s$ the problem can also be reduced to Theorem \ref{IMP} as follows.  Each ideal $Q_i$ may be generated by polynomials of degree $\leq (2d)^{2^{O(n)}}$ and this bound depends only on $n$ and $d$ (see Statements 63, 64 and 64 \cite{S}). By Statement 56 of \cite{S}, the ideal $J$ is generated by polynomials of degrees $\leq (2d)^{2^{O(n)}}$ and once more this bound depends only on $n$ and $d$. Let $g_1$, $\cdots$, $g_t$ be such generators of $J$. Since $\deg(g_i)\leq (2d)^{2^{O(n)}}$ for any $i$, then $t$ will be bounded by the number of monomials in $x_1$, $\cdots$, $x_n$ of degree $\leq (2d)^{2^{O(n)}}$, thus $t\leq \binom{(2d)^{2^{O(n)}}+n}{n}\leq (2d)^{2^{O(n)}}$ also.\\
If $f\in I\k[x]_{P}$, then $f\in J$ and by Theorem \ref{IMP}, there exist polynomials $c_1$, $\cdots$, $c_t$ such that 
$$f=c_1g_1+\cdots+c_tg_t$$
where $\deg(c_i)\leq \deg(f)+(td)^{2^n}\leq \deg(f)+ (2d)^{2^{O(n)}}$ for every $i$.\\
Let $J'$ be the ideal of $\k[x]$ equal to $Q_{r+1}\cap \cdots\cap Q_s$. Then as for $J$, $J'$ is generated by polynomials of degrees $\leq (2d)^{2^{O(n)}}$. Since $J'\not\subset P$, one of these generators is not in $P$. Let $u$ be such a polynomial. Then we have $ug_i\in J\cap J'=I$ for every $i$. Thus there exist polynomials  $b_{i,j}$, for $1\leq i\leq t$ and $1\leq j\leq p$, such that
$$ug_i=\sum_jb_{i,j}f_j.$$
Still by Theorem \ref{IMP}, we may choose the $b_{i,j}$ such that $\deg(b_{i,j})\leq (2d)^{2^{O(n)}}$.
Hence
$$uf=\sum_j\left(\sum_ic_ib_{i,j}\right)f_j.$$
Then the result follows since $\deg(u)\leq (2d)^{2^{O(n)}}$ and  
$$\deg\left(\sum_ic_ib_{i,j}\right)\leq \deg(f)+(2d)^{2^{O(n)}}.$$
\end{proof}

\noindent Let $S$ be a multiplicative closed subset of $\k[x]$. The proof of Theorem \ref{IMPlocal} gives also the following result:
\begin{prop}\label{IMP_local2}
Let $\k$ be an infinite field.
Let $M$ be a submodule  of $\k[x_1,\cdots,x_n]^q$ generated by the vectors $f_1$, $\cdots$, $f_p$ and $S$ be a multiplicative closed subset of $\k[x]$. Then there exists a constant $C>0$ (depending only on $M$) such that the following holds:\\
For any $f\in\k[x]^q$,  $f\in S^{-1}M$ if and only if there exist polynomials $a_1$, $\cdots$, $a_p$ of degrees $\leq \deg(f)+C$ and $u$, $u\in S$, of degree $\leq C$ such that
$$uf=a_1f_1+\cdots+a_pf_p.$$
\end{prop}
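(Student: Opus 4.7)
The plan is to run the same argument as in the proof of Theorem \ref{IMPlocal}, only now the prime ideal $P$ is replaced by the multiplicative set $S$; since we no longer demand an explicit bound in terms of $n,q,d$, the argument actually becomes easier.

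First I would reduce to the case where $M = I$ is an ideal of $\k[x]$ by the idealization trick already used: the module $M \subset \k[x]^q$ corresponds, via the isomorphism $\sigma\colon R \to R' = \k[x,y_1,\dots,y_q]/(y)^2$, to an ideal $I' + (y)^2$ of $\k[x,y]$, and $f \in S^{-1}M$ iff $\sum_j f^{(j)}(x) y_j$ lies in $S^{-1}(I'+(y)^2)$. Then, for the converse direction of the equivalence, there is nothing to prove since $uf = \sum a_j f_j$ with $u \in S$ obviously implies $f \in S^{-1}M$; so only the forward implication needs work.

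Second, assuming $M = I$ is an ideal of $\k[x]$, take an irredundant primary decomposition $I = Q_1 \cap \cdots \cap Q_s$ with $P_i := \sqrt{Q_i}$. After reordering, assume $P_i \cap S = \emptyset$ for $1 \leq i \leq r$ and $P_i \cap S \neq \emptyset$ for $r < i \leq s$. Setting $J := Q_1 \cap \cdots \cap Q_r$, the standard behaviour of primary decomposition under localization (Theorem 17, Chap.~4 of \cite{SZ}) gives $S^{-1}I = S^{-1}J$. Moreover, every element of $S$ is a nonzerodivisor modulo $J$ (since the associated primes of $J$ are $P_1, \dots, P_r$, all disjoint from $S$), so the map $\k[x]/J \to S^{-1}\k[x]/S^{-1}J$ is injective. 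Hence $f \in S^{-1}I$ already forces $f \in J$.

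Third, fix once and for all generators $g_1, \dots, g_t$ of $J$ (of some fixed degree $\leq D_1$ depending only on $M$); since $f \in J$, Theorem \ref{IMP} produces $c_1,\dots,c_t \in \k[x]$ with $f = \sum_i c_i g_i$ and $\deg(c_i) \leq \deg(f) + C_1$ for a constant $C_1$ depending only on $t$, $n$, and $D_1$. For each $i > r$ choose $v_i \in S \cap P_i$; since $Q_i$ is $P_i$-primary, some power $v_i^{n_i}$ lies in $S \cap Q_i$, and the product $u := \prod_{i>r} v_i^{n_i}$ lies in $S \cap (Q_{r+1} \cap \cdots \cap Q_s)$, with $\deg(u)$ a fixed constant $C_0$. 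Then $u g_k \in J \cap (Q_{r+1} \cap \cdots \cap Q_s) = I$ for each $k$, and applying Theorem \ref{IMP} again we can write $u g_k = \sum_j b_{k,j} f_j$ with $\deg(b_{k,j}) \leq C_2$ for a constant $C_2$ depending only on $M$. Assembling everything,
$$uf = \sum_j \Bigl(\sum_k c_k b_{k,j}\Bigr) f_j,$$
where $\deg\bigl(\sum_k c_k b_{k,j}\bigr) \leq \deg(f) + C_1 + C_2$. Taking $C := \max\{C_0,\, C_1 + C_2\}$ yields the desired bound.

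The one point that requires care—and is the only potential obstacle—is the passage from $f \in S^{-1}I$ to $f \in J$. Once that is in hand, all remaining degree bounds are fixed constants (depending on $I$, the chosen primary decomposition, $D_1$, and the elements $v_i \in S \cap P_i$) times $\deg(f)+1$, so $C$ genuinely depends only on $M$ and $S$.
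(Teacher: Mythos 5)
Your proof is correct and follows essentially the same route as the paper's: idealization reduction, irredundant primary decomposition $I=Q_1\cap\cdots\cap Q_s$ sorted by whether $S$ meets $Q_i$ (equivalently $P_i=\sqrt{Q_i}$), reduction to membership in $J=Q_1\cap\cdots\cap Q_r$, an application of Theorem \ref{IMP} to $J$, and multiplication by a single $u\in S\cap Q_{r+1}\cap\cdots\cap Q_s$. The only (welcome) additions are your explicit justification that $f\in S^{-1}J$ forces $f\in J$ (via nonzerodivisors and injectivity of $\k[x]/J\to S^{-1}\k[x]/S^{-1}J$) and the explicit construction of $u$ as a product of powers $v_i^{n_i}$, whereas the paper simply asserts the existence of such a $u$; you are also more careful than the statement in noting that $C$ depends on $S$ (through $\deg(u)$) and not only on $M$.
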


\begin{proof}
We can adapt the proof of  Theorem \ref{IMPlocal} as follows (we keep the same notations): the reduction to the case where $M=I$ is an ideal of $\k[x]$ remains the same. Then if $I=Q_1\cap\cdots \cap Q_s$ is an irredundant primary decomposition of $I$ in $\k[x]$, we may assume that $Q_1$, $\cdots$, $Q_r\subset \k[x]\backslash S$ and $Q_i\cap S\neq\emptyset$ for $i>r$. Then as before
$$I\cdot S^{-1}\k[x]=Q_1\cdot S^{-1}\k[x]\cap\cdots\cap Q_r\cdot S^{-1}\k[x]$$
is an irredudant primary decomposition of $I\cdot S^{-1}\k[x]$. If $J$ denotes the ideal $Q_1\cap \cdots\cap Q_r$ of $\k[x]$, then for any $f\in \k[x]$, we also have $f\in I\cdot S^{-1}\k[x]=J\cdot S^{-1}\k[x]$ if and only if $f\in J$. \\
Then we follow the proof of Theorem \ref{IMPlocal}: if $f\in\k[x]$ and $f\in I\cdot S^{-1}\k[x]$ then $f\in J$ and there exist polynomials $c_1$, $\cdots$, $c_t$ such that 
$$f=c_1g_1+\cdots+c_tg_t$$
where $\deg(c_i)\leq \deg(f)+(td)^{2^n}\leq \deg(f)+ (2d)^{2^{O(n)}}$ for any $i$ and $g_1$,..., $g_t$ are generators of $J$. Moreover the degrees of the $g_i$ and the integer $t$ are bounded by $(2d)^{2^{O(n)}}$.\\
Now the only difference with the proof of Theorem \ref{IMPlocal} is that $\k[x]\backslash S$ is not an ideal of $\k[x]$. So let us choose a non-zero polynomial $u\in Q_{r+1}\cap\cdots\cap Q_s\cap S$ (such a polynomial exists since $S$ is a multiplicative system and $Q_i\cap S\neq\emptyset$ for all $i>r$) and let us denote by $D$ its degree: $D=\deg(u)$. Then
$ug_i\in I$ for every $i$. \\
Still by following the proof of Theorem \ref{IMPlocal} we see by Theorem \ref{IMP} that there exist polynomials  $b_{i,j}$, for $1\leq i\leq t$ and $1\leq j\leq p$, such that
$$ug_i=\sum_jb_{i,j}f_j$$
with $\deg(b_{i,j})\leq D+(2d)^{2^{O(n)}}$ for every $i$ and $j$. Then
$$uf=\sum_j\left(\sum_ic_ib_{i,j}\right)f_j$$
and
$$\deg\left(\sum_ic_ib_{i,j}\right)\leq \deg(f)+D+(2d)^{2^{O(n)}}.$$
So the proposition is proven with $C=D+(2d)^{2^{O(n)}}.$

\end{proof}



\section{Ideal membership in rings of algebraic power series}\label{S_IMP_alg}

\begin{theorem}\label{IMP_alg}
Let $\k$ be any infinite field. Then there exists two computable functions $C_1(n,q,p,H_1,D_1,D_2)$ and $C_2(n,q,p,H_1,D_1,D_2)$  such that the following holds:\\
Let  $n$, $q$, $p$, $H_1$, $H_2$, $D_1$ and $D_2$ be integers and  $f=(f_1,\cdots,f_q)$ and $g_1=(g_{1,1},\cdots,g_{1,q})$, $\cdots$, $g_p=(g_{p,1},\cdots,g_{p,q})$ be vectors of $\k\lag x_1,\cdots,x_n\rag^q$ satisfying 
$$\h(g_i)\leq H_1\text{ for all i, } \h(f)\leq H_2,$$
$$\left[\k(x,g_{i,j})_{1\leq i\leq p,\ 1\leq j\leq q}:\k(x)\right]\leq D_1,$$
$$\left[\k(x,f_{j})_{ 1\leq j\leq q}:\k(x)\right]\leq D_2.$$
Let us assume that $f$ is in the $\k\lag x\rag$-module generated by the vectors $g_i$. Then there exist
  algebraic power series $a_i$ for $1\leq i\leq p$ such that
\begin{equation}\label{system}f_j=\sum_{i=1}^pa_ig_{i,j},\ \ \ \ 1\leq j\leq q   \end{equation}
and 
$$\h(a_i)\leq C_1\left(n,q,p,H_1,D_1,D_2\right)\cdot (H_2+1) \ \ \forall i,$$
$$\Deg(a_i)\leq C_2(n,q,p,H_1,D_1,D_2)\ \ \forall i.$$
\end{theorem}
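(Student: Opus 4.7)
The plan is to prove the theorem by induction on $n$ (with a subinduction on $q$ handled by idealization), using the effective Weierstrass Division Theorem (Theorem \ref{division_sep}) as the main reduction tool. The base case $n = 0$ reduces to finite-dimensional linear algebra over $\k$. The inductive strategy has two layers: (i) reduce the module case $q > 1$ to the ideal case $q = 1$ via the idealization trick from the proof of Theorem \ref{IMPlocal}, temporarily enlarging the ring to $\k\lag x, y_1, \ldots, y_q\rag$ and working with the ideal generated by $\wdt g_i := \sum_j g_{i,j} y_j$ and the monomials $y_i y_j$; (ii) in the ideal case, use Weierstrass Division to transfer the problem to the smaller ring $\k\lag x'\rag$.

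For step (ii): after a generic $\k$-linear change of coordinates (valid since $\k$ is infinite, and preserving heights and degrees), one may assume $g_1$ is $x_n$-regular of order $d \leq H_1$ by Lemma \ref{height_ord}. Lemma \ref{height_preparation} writes $g_1 = u_1 P$ with $P \in \k\lag x'\rag[x_n]$ a Weierstrass polynomial of controlled height, and Theorem \ref{division_sep} applied to $f$ and each $g_i$ ($i \geq 2$) with divisor $P$ produces remainders $r_0, r_2, \ldots, r_p \in \k\lag x'\rag[x_n]_{<d}$ whose heights are bounded linearly in $H_2$ (and polynomially in $H_1, d, D_1$). Since $(g_1) = (P)$ and by uniqueness of Weierstrass Division combined with Euclidean division by the monic $P$ in $\k\lag x'\rag[x_n]$, one obtains the key equivalence
$$f \in (g_1, \ldots, g_p) \k\lag x\rag \iff r_0 \in (P, r_2, \ldots, r_p) \k\lag x'\rag[x_n].$$
Writing $r_0 = \tilde s_1 P + \sum_{i \geq 2} \tilde s_i r_i$ with $\tilde s_i \in \k\lag x'\rag[x_n]$ of degree bounded in $x_n$ (degree $< d$ for $i \geq 2$ by dividing by the monic $P$, degree $\leq d - 2$ for $\tilde s_1$ by degree considerations), and equating coefficients of each $x_n^k$, this becomes a module membership problem in $\k\lag x'\rag^{2d-1}$ with $pd - 1$ generators in $n - 1$ variables. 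The matrix and target of this system have heights and degrees controlled by the Weierstrass estimates, so the inductive hypothesis at $n - 1$ variables applies and produces the $\tilde s_i$; unwinding through the Weierstrass quotients, via Lemma \ref{inequalities_height}, yields the $a_i$ for the original ideal problem.

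The main obstacle is the bookkeeping required to verify that the linear dependence on $H_2 = \h(f)$ asserted in the conclusion is preserved through each reduction. This relies crucially on the linearity in $\h(g)$ of Theorem \ref{division_sep} and the corresponding linearity in Lemma \ref{inequalities_height}, and must be maintained when recovering the quotient $a_1$ via a final exact Weierstrass Division. A secondary subtlety is that idealization in step (i) temporarily increases the variable count by $q$; this is resolved by organizing the recursion so that idealization is applied once at the outset of the $q > 1$ case, after which the Weierstrass step immediately reduces the number of $x$-variables by one, and the ensuing smaller module problem is handled directly by the inductive hypothesis (with the $y$-variables absorbed into the base ring of the inductive statement). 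Setting $y = 0$ in the resulting coefficients recovers the original $a_i$ while preserving the height bounds.
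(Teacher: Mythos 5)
Your $q=1$ analysis is essentially correct and matches the single-variable-elimination step of the paper's proof: Weierstrass divide by the Weierstrass polynomial of a regular generator, observe that membership is equivalent to a linear system over $\k\lag x'\rag$ for the polynomial coefficients of the remainders, and invoke induction on $n$. The equivalence $f\in(g_1,\ldots,g_p)\k\lag x\rag \iff r_0\in(P,r_2,\ldots,r_p)\k\lag x'\rag[x_n]$ is sound, and the $x_n$-degree bounds on the $\tilde s_i$ follow from Euclidean division by the monic $P$ as you say.

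However, your reduction from $q>1$ to $q=1$ via idealization has a genuine gap, and it is precisely the step where the paper does something different. After idealization, every generator of the ideal $I' = (\wdt g_1,\ldots,\wdt g_p) + (y)^2\subset\k\lag x,y\rag$ — both the $\wdt g_i=\sum_j g_{i,j}y_j$ and the $y_iy_j$ — lies in the ideal $(y_1,\ldots,y_q)$, hence vanishes identically on $\{y=0\}$ and is therefore not $x_n$-regular. Your $q=1$ step requires the Weierstrass divisor to be (a unit times) one of the ideal generators, because the reduction rests on $(g_1)=(P)$; no element of $I'$ is $x_n$-regular, so there is no admissible divisor and the Weierstrass step you want to apply next cannot be performed. (Dividing instead by an $x_n$-regular series not in $I'$, or with respect to a $y$-variable, does not give the membership equivalence you need and also fails to reduce the number of $x$-variables.) Relatedly, the recursion you describe never terminates: idealization sends a problem in $n$ variables of rank $q>1$ to one in $n+q$ variables of rank $1$, and the subsequent Weierstrass step only gets back to $n+q-1\geq n$ variables, so the induction on $n$ cannot close.

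The paper sidesteps this entirely by never leaving the module setting. It forms the $q\times p$ coefficient matrix $G=(g_{i,j})$, assumes full rank, and takes $\D$ to be the determinant of a $q\times q$ submatrix. Since $\D$ is a nonzero algebraic power series in $x$ alone, a generic $\k$-linear change of the $x$-variables makes it $x_n$-regular. One then writes each unknown $a_i$ in ansatz form $a_i^*=\sum_{k<d}a_{i,k}(x')x_n^k$, Weierstrass-divides the residuals $F_j(x,A^*)=\sum_i g_{i,j}A_i^*-f_j$ by $\D$, and the vanishing of the remainders is a linear module membership problem over $\k\lag x'\rag$ (rank $qd$, $pd$ unknowns) — this is where the induction on $n$ applies. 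The final, crucial step is a Cramer's-rule fix-up: a solution of the remainder system only gives $Ga^*-f=\D\,b$; multiplying by the adjoint $G'$ of the chosen $q\times q$ block and replacing $a_i^*\mapsto a_i^*-(G'b)_i$ for $i\leq q$ turns this near-solution into an exact solution, with controlled heights via Lemma \ref{inequalities_height}. This determinant-plus-adjoint device is the ingredient your proposal is missing, and it is what lets the proof stay in the module category, drop exactly one $x$-variable per step, and keep the dependence on $\h(f)$ linear.
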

\vspace{0.2cm}
\begin{proof}
The theorem is proven by induction on $n$. For $n=0$ and any $q$, $p$, $H_1$, $H_2$, $D_1$, $D_2$ any solution $(a_i)$ of \eqref{system} will have height equal to 0 and degree equal to 1. Let us assume that theorem is proven for an integer $n-1\geq 0$ and any integers $q$, $p$, $H_1$, $H_2$, $D_1$, $D_2$ and let us prove it for $n$.\\
We set $H_g:=\max_{i,j}\h(g_{i,j})$, $D_g:=\max_{i,j}\Deg(g_{i,j})$, $H_f:=\max_j\h(f_j)$ and $D_f:=\max_j\Deg(f_j)$. Let $G$ be the $p\times q$ matrix whose entries are the $g_{i,j}$.
We assume that the rank of $G$ is $q\leq p$ (otherwise some equations may be removed) and that the first $q$ columns are linearly independent. Let 
$\D$ be the determinant of these first $q$ columns.
By a linear change of coordinates me may assume that $\D$ is $x_n$-regular of degree $d$ since $\k$ is infinite. By Lemma \ref{height_ord} $d\leq  \h(\D)$. Moreover $\D$ is a sum of $q!$ elements which are the product of $q$ entries of $G$. Thus by Lemma \ref{inequalities_height} \eqref{2} and \eqref{5} we have

$$\h(\D)\leq q!D_g^{q!}\left(qD_g^q H_g\right)=q!qD_g^{q!+q}H_g.$$ 
Of course $\D\in \k(x,g_{i,j})_{1\leq i\leq p,\ 1\leq j\leq q}$ thus
$$\Deg(\D)\leq D_g.$$
 By Lemma \ref{height_preparation} we can write $\D=u\cdot P$ where $u$ is a unit and $P$ a Weierstrass polynomial of degree $d$ with
$$\h(P)\leq 2d\h(\D)^{d+1}\leq 2\h(\D)^{\h(\D)+2}\leq 2\left(q!q D_g^{q!+q}H_g\right)^{q!q D_g^{q!+q}H_g+2}. $$\\
Set 
$$F_j(x,A):=\sum_{i=1}^pg_{i,j}(x)A_i-f_j(x)\ \ \forall j$$
where $A_1$, $\cdots$, $A_p$ are new variables. \\
Let $a_{i,k}(x')$ be algebraic power series of $\k\lag x'\rag$ for $1\leq i\leq p$ and $0\leq k\leq d-1$. Then  let us set
\begin{equation}\label{a^*}a_i^*:=\sum_{k=0}^{d-1}a_{i,k}(x')x_n^k \ \text{ for } 1\leq i\leq p,\end{equation}

$$a^*:=(a_1^*,\cdots,a_p^*).$$
Let $A_{i,k}$, $1\leq i\leq p$, $0\leq k\leq d-1$, be new variables and let us set
$$A_i^*:=\sum_{k=0}^{d-1}A_{i,k}x_n^k,\ \ \ \ 1\leq i\leq p$$
and
$$A^*:=(A_1^*,\cdots,A_p^*).$$\\
Let us consider the Weierstrass division of $F_j(x,A^*)$ by $\D$ with respect to the variable $x_n$:
$$F_j(x,A^*)=\D.Q_j(x,A^*)+R_j$$
where $$R_j=\sum_{l=0}^{d-1}R_{j,l}(x',A^*)x_n^l.$$
Let us consider the following Weierstrass divisions:
$$g_{i,j}(x)x_n^k=\D. Q_{i,j,k}(x)+R_{g_{i,j}},$$
$$\text{where }R_{g_{i,j}} =\sum_{l=0}^{d-1}R_{i,j,k,l}(x')x_n^l,$$
$$\text{ and } f_j(x)=\D. Q'_j(x)+R_{f_j},$$
$$\text{where } R_{f_j}=\sum_{l=0}^{d-1}R_{j,l}'(x')x_n^l.$$
By unicity of the remainder and the quotient of the Weierstrass division we obtain:
\begin{equation}\label{Qj}Q_j(x,A^*)=\sum_{i=1}^p\sum_{k=0}^{d-1}Q_{i,j,k}(x)A_{i,k}-Q'_j(x),\end{equation}
$$R_{j,l}(x',A^*)=\sum_{i=1}^p\sum_{k=0}^{d-1}R_{i,j,k,l}(x')A_{i,k}-R'_{j,l}(x'),$$
Hence $Q_j(x',A^*)$ and $R_{j,l}(x',A^*)$ are linear with respect to the variables $A_{i,k}$.\\
\\
If 
\begin{equation}\label{eqn-1}R_{j,l}(x',a^*)=0\ \text{for all } j \text{ and }l,\end{equation}
 then $F_j(x,a^*)\in (\D)$  $\forall j.$
This means that there exists a vector of $\k\lag x\rag^q$, denoted by $b(x)$, such that

\begin{equation}\label{eqlin} G(x).a^*(x)-f(x)=\D(x).b(x)\end{equation}
where $G(x)$ is the $q\times p$ matrix with entries $g_{i,j}(x)$ and $f(x)$ is the vector with entries $f_j(x)$. In fact we can choose  $b(x)$ to be the vector of entries $Q_j(x,a^*)$.\\
Let $G'(x)$ be the adjoint matrix of the $q\times q$ matrix built from $G(x)$ by taking only the first $q$ columns. 
Then 
$$G'(x).G(x)=\left(\begin{array}{cc}
\D(x).1\!\!1_q & \star
\end{array}\right).$$
Thus, by multiplying \eqref{eqlin} by $G'(x)$ on the left side, we have
$$\left[\begin{array}{c}\D(x)a^*_1(x)+P_1(a^*_{q+1}(x),\cdots,a^*_p(x))\\
\D(x)a^*_2(x)+P_2(a^*_{q+1}(x),\cdots,a^*_p(x)\\
\vdots\\
\D(x)a^*_q(x)+P_q(a^*_{q+1}(x),\cdots,a^*_p(x))
\end{array}\right]-G'(x).f(x)=\D(x).G'(x).b(x)$$
for some $P_i$ depending linearly on $a^*_{q+1}(x),\cdots,a^*_p(x)$.
Then we set
\begin{equation}\label{a(x)}\begin{split}a_i(x):=a^*_i(x)-c_i(x) \ \text{ for } 1\leq i\leq q,\\
a_i(x):=a^*_i(x) \hspace{1.3cm} \text{ for } q<i\leq p,\end{split}\end{equation}
where $c(x)$ is the vector $G'(x).b(x)$.
Since $G'(x)$ has rank $q$, this shows that
$$G(x).a(x)-f(x)=0$$
i.e. $a(x)$ is a solution of \eqref{system}.\\
\\
Now we have to bound the height and the degree of $a(x)$ in terms of the height and the degree of $a^*$. For simplicity we will bound the height and the degree of $a(x)$ when $\cha(\k)=0$. The bounds in positive characteristic are obtained in the same way and they are  similar (the only difference comes from Theorem \ref{division_sep} - see also Remark \ref{poly}).\\
\\
First by Lemma \ref{inequalities_height} \eqref{6} we have
$$\h(g_{i,j}(x)x_n^k)\leq H_g+kD_g.$$
Let us remind that $d\leq \h(\D)\leq q!qD_g^{q!+q}H_g\leq q^qD_g^{q!+q}H_g$.
Thus by theorem \ref{division_sep} we have (by choosing $\e=1$ for simplicity and since $k<d$):

$$\h(R_{i,j,k,l}(x'))\leq 2^{2^{O(\h(\D)^2)}}D_g^{O(d^4)} (H_g+kD_g+1)\leq 2^{2^{O(q^{2q}  D_g^{2(q!+q)}H_g^2)}},\  $$

$$\h(R'_{i,l}(x'))\leq 2^{2^{O(\h(\D)^2)}}D_f^{O(d^4)}(H_f+1)\leq 2^{2^{O(q^{2q}  D_g^{2(q!+q)}H_g^2)}}D_f^{O(d^4)}(H_f+1),$$

\begin{equation*}\begin{split}\h(Q_{i,j,k}(x))\leq 2^{2^{O(q^{2q}  D_g^{2(q!+q)}H_g^2)}}D_g^{d^4+d^3+6d^2-3d+5}\Deg(\D) (H_g&+kD_g+1)\\
&\leq 2^{2^{O(q^{2q}  D_g^{2(q!+q)}H_g^2)}},\end{split}\end{equation*}

\begin{equation*}\begin{split}\h(Q'_j(x))\leq 2^{2^{O(q^{2q}  D_g^{2(q!+q)}H_g^2)}}D_f^{d^4+d^3+6d^2-3d+5}\Deg&(\D)(H_f+1)\\
\leq 2&^{2^{O(q^{2q}  D_g^{2(q!+q)}H_g^2)}}D_f^{8d^4}(H_f+1),\ \end{split}\end{equation*}

\begin{equation*}\begin{split}\Deg(R_{i,j,k,l}(x'))\leq\h(\D)!D_g^{\h(\D)}\leq (\h(&\D)D_g)^{\h(\D)}\\
\leq (q!q& D_g^{q!+q+1}H_g)^{q!q  D_g^{q!+q}H_g}\leq 2^{2^{O(q^{2q}  D_g^{2(q!+q)}H_g^2)}},\end{split}\end{equation*}

\begin{equation*}\begin{split}\Deg(R'_{i,l}(x'))\leq \h(\D)!D_f^{\h(\D)}\leq(q!q  D_g^{q!+q}D_fH_g)^{q!q  D_g^{q!+q}H_g}&\\
&\leq (2D_f)^{2^{O(q^{2q}  D_g^{2(q!+q)}H_g^2)}}, \end{split}\end{equation*}

\begin{equation*}\begin{split}\Deg(Q_{i,j,k}(x))\leq \h(\D)!D_g^{\h(\D)+1}&\Deg(\D)\\
&\leq (q!q D_g^{q!+q+1}H_g)^{q!q  D_g^{q!+q}H_g+2}D_g\leq 2^{2^{O(q^{2q}  D_g^{2(q!+q)}H_g^2)}},\end{split}\end{equation*}

$$\Deg(Q'_j(x))\leq (q!q  D_g^{q!+q}D_fH_g)^{q!q  D_g^{q!+q}H_g}D_f^{q!q  D_g^{q!+q}H_g}\leq (2D_f)^{2^{O(q^{2q}  D_g^{2(q!+q)}H_g^2)}}.\hspace{2.2cm}$$\\
\\
We set 
$$D_{a^*}:=\Deg(a^*),\ \ H_{a^*}:=\h(a^*).$$
By Lemma \ref{inequalities_height} \eqref{5} we have
$$\h(Q_{i,j,k}(x)a_{i,k}(x'))\leq 2\Deg(Q_{i,j,k}(x))D_{a^*}\max\{\h(Q_{i,j,k}(x')),H_{a^*}\}$$
$$\leq 2^{2^{O(q^{2q}  D_g^{2(q!+q)}H_g^2)}}D_{a^*}H_{a^*}.$$
Moreover 
$$\Deg(Q_{i,j,k}(x)a_{i,k}(x'))\leq 2^{2^{O(q^{2q}  D_g^{2(q!+q)}H_g^2)}}D_{a^*}.$$
Since the components of $b(x)$ are the $Q_j(x,a^*)$ we obtain by \eqref{Qj} and Lemma \ref{inequalities_height} \eqref{2} 

\begin{equation*}\begin{split}\h(b(x))\leq (pd+1)\left(2^{2^{O(q^{2q}  D_g^{2(q!+q)}H_g^2)}}D_{a^*}\right)^{pd}&\max_j\{\Deg(Q'_j(x))\}\times \\
\max&\left\{2^{2^{O(q^{2q}  D_g^{2(q!+q)}H_g^2)}}D_{a^*}H_{a^*} ,\h(Q'_j(x))\right\}.\end{split}\end{equation*}
Since $d\leq \h(\D)\leq q!q  D_g^{q!+q}H_g$ we get
\begin{equation}\label{hb}\h(b(x))\leq (2^{p+1}D_f)^{2^{O(q^{2q}  D_g^{2(q!+q)}H_g^2)}}D_{a^*}^{pd+1}\max\{H_{a^*},(H_f+1)\}. \end{equation}
Moreover \eqref{Qj} gives
\begin{equation}\label{Db}\Deg(b(x))\leq (2^{p+1}D_f)^{2^{O(q^{2q}  D_g^{2(q!+q)}H_g^2)}}D_{a^*}.\end{equation}
We have
$$\h(\D)\leq q.q!D_g^{q!+q}H_g$$
and, by Lemma \ref{inequalities_height} \eqref{2} and \eqref{5} the height any $(q-1)\times(q-1)$ minor of $G$ is bounded by
$$(q-1)!D_g^{(q-1)!}((q-1)D_g^{q-1}H_g)\leq q.q!D_g^{q!+q}H_g. $$
Thus, by Lemma \ref{inequalities_height} \eqref{5}, the height of the  coefficients of $G'(x)$ is less than

$$ 2D_g^2q.q!D_g^{q!+q}H_g=2q!qD_g^{q!+q+2}H_g.$$
Hence, by Lemma \ref{inequalities_height} \eqref{2} and \eqref{5},  using \eqref{hb}, \eqref{Db} and since $\Deg(G'(x))\leq D_g$ we obtain

\begin{equation*}\begin{split}\h(G'(x).b(x))\leq q(\Deg(G'(x))&\Deg(b(x)))^q\times\\
&\big(2\Deg(G'(x))\Deg(b(x))\max\{\h(G'(x)),\h(b(x))\} \big)\end{split}\end{equation*}
$$\leq (2^{p+1}D_f)^{2^{O(q^{2q}  D_g^{2(q!+q)}H_g^2)}}D_{a^*}^{q+pd+1}\max\{H_{a^*},(H_f+1)\}.$$
Hence, by \eqref{a(x)} and Lemma \ref{inequalities_height} \eqref{2}
$$\h(a(x))\leq  (2D_f+2^p)^{2^{O(q^{2q}  D_g^{2(q!+q)}H_g^2)}}D_{a^*}^{2H_gp+3}\max\{H_{a^*},(H_f+1)\}.$$
Moreover
$$\Deg(a(x))\leq D_{a^*}\Deg(b(x))\leq (2^{p+1}D_f)^{2^{O(q^{2q}  D_g^{2(q!+q)}H_g^2)}}D_{a^*}^2.$$
\\
Let $\Phi:=q^{2q}  D_g^{2(q!+q)}H_g^2$. By the inductive assumption we can find a solution $a'(x')=(a_{i,k}(x'))_{1\leq i\leq p, \ 0\leq k\leq d-1}$ of the system \eqref{eqn-1} such that
$$\h(a'(x'))\leq C_1\left(n-1,qd,pd,2^{2^{O(\Phi)}},2^{2^{O(\Phi)}},(2D_f)^{2^{O(\Phi)}}\right)\cdot D_f^{O(\Phi^2)}2^{2^{O(\Phi)}}(H_f+1),$$
$$\Deg(a'(x'))\leq C_2\left(n-1,qd,pd,2^{2^{O(\Phi)}},2^{2^{O(\Phi)}},(2D_f)^{2^{O(\Phi)}}\right).$$
Since
$$D_{a^*}\leq \Deg(a'(x')),$$
$$H_{a^*}\leq d\cdot\Deg(a'(x'))^d(\h(a'(x'))+d-1)$$
by \eqref{a^*} and Lemma \ref{inequalities_height} \eqref{2},
 the solution $a(x)$ of \eqref{system} satisfies
\begin{equation*}\begin{split}\h(a(x))\leq  (2D_f+2^p)^{2^{O(\Phi)}}&\times \\
C_2\left(n-1,qd,pd,2^{2^{O(\Phi)}},\right.&\left.2^{2^{O(\Phi)}},(2D_f)^{2^{O(\Phi)}}\right)^{2H_gp+3}\times \\
C_1\left(n-1,qd,pd,2^{2^{O(\Phi)}},2^{2^{O(\Phi)}},\right.&\left.(2D_f)^{2^{O(\Phi)}}\right)\cdot D_f^{O(\Phi^2)}2^{2^{O(\Phi)}}(H_f+1).\end{split}\end{equation*}

$$\Deg(a(x))\leq  (2^{p+1}D_f)^{2^{O(\Phi)}}C_2\left(n-1,qd,pd,2^{2^{O(\Phi)}},2^{2^{O(\Phi)}},(2D_f)^{2^{O(\Phi)}}\right)^2.$$
Then the result is proven with $\Phi=q^{2q}  D_1^{2(q!+q)}H_1^2$,
\begin{equation*}\begin{split}C_1(n,q,p,H_1,D_1,D_2)=(2D_2+2^p)^{2^{O(\Phi)}}&\times \\
C_2\left(n-1,qd,pd,2^{2^{O(\Phi)}},\right.&\left.2^{2^{O(\Phi)}},(2D_2)^{2^{O(\Phi)}}\right)^{2H_1p+3}\times \\
C_1\left(n-1,qd,pd,2^{2^{O(\Phi)}},2^{2^{O(\Phi)}},\right.&\left.(2D_2)^{2^{O(\Phi)}}\right)\cdot D_2^{O(\Phi^2)}2^{2^{O(\Phi)}}\end{split}\end{equation*}
and
\begin{equation*}\begin{split}C_2(n,q,p,H_1,D_1,D_2)=(&2^{p+1}D_2)^{2^{O(\Phi)}}\times\\
C_2&\left(n-1,qd,pd,2^{2^{O(\Phi)}},2^{2^{O(\Phi)}},(2D_2)^{2^{O(\Phi)}}\right)^2.\end{split}\end{equation*}
\end{proof}

\begin{rmk}\label{poly}
The proof of this result does not give a nice bound on the functions $C_1\left(n,q,p,H_1,D_1,D_2\right)$ or $C_2(n,q,p,H_1,H_2,D_1,D_2)$. One can check that $C_2(n,q,p,H_1,H_2,D_1,D_2)$ is bounded by a tower of exponentials of length $2n+1$ of the form
$$(2^{p+1}D_2)^{2^{2^{\iddots^{O(qD_1H_1)}}}}.$$
For $C_1\left(n,q,p,H_1,D_1,D_2\right)$ we obtain the same kind of bound.\\
In positive characteristic, the bounds are more complicated and are not polynomial in $D_2$ since the bounds on the complexity of the Weierstrass Division are not polynomial in $D_2$.
\end{rmk}



\section{Proof of Theorem \ref{main}}\label{proof_main}

In this part we will denote by $R_n$ the ring of algebraic power series in $n$ variables over a field $\k$ and $\wdh{R}_n$ its $(x_1,\cdots,x_n)$-adic completion. If $\k$ is a finite field we replace $\k$ by $\k(t)$ where $t$ is transcendental over $\k$ -  this does not change the problem. Thus we may assume that $\k$ is infinite.\\
\\
For any $\k\lag x\rag$-module $M$, we have $\ord_{M}(m)=\ord_{\wdh{M}}(m)$ for all $m\in M$, thus we may assume that $M$ is equal to $R_n^s/N$ for some $R_n$-submodule $N$ of  $R_n^s$.\\
We set $e:=(e_1,\cdots,e_s)$ where the $e_1$, $\cdots$, $e_s$ is the canonical basis of $R_n^s$.  Let us assume that $N$ is  generated by  $L_1(e)$, $\cdots$, $L_l(e)$ where
$$L_i(e)=\sum_{j=1}^sl_{i,j}e_j \ \text{ for } 1\leq i\leq l,$$
and let $H$ (resp. $D$) be a bound on the height (resp. the degree) of the $l_{i,j}$.\\
The proof is done by a double induction on $s$ and $n$. Let 
$$f=f_1e_1+\cdots+f_se_s\in R_n^s\backslash N.$$
We consider the following cases:\\
\\
- (1) If $s=1$ and $N=(0)$, then $M=R_n$ and in this case 
$$\ord_M(f)=\ord_{R_n}(f)\leq \h(f)$$ for any algebraic power series $f$ by Lemma \ref{height_ord}.\\
\\
- (2) Assume that $s=1$ and $N\neq (0)$ is an ideal  of $R_n$. After a linear change of variables there exists  a Weierstrass polynomial $g(x)\in N$ with respect to $x_n$, whose coefficients are in $R_{n-1}$,   of degree $d$ in $x_n$. Then $M$ is isomorphic to $R_{n-1}^{d}/N'$ for some sub-module $N'$ of $R_{n-1}^{d}$. The isomorphism $M\simeq R_{n-1}^d/N'$ is induced by the morphism $R_n\lgw R_{n-1}^d$ sending a power series $f(x)\in R_n$ onto $(r_0,\cdots,r_{d-1})$ where
$$r=r_0+r_1x_n+\cdots+r_{d-1}x_n^{d-1}$$
is 
the remainder of the Weierstrass division of $f(x)$ by $g(x)$. Then $N'$ is the $R_{n-1}$-sub-module of $R_{n-1}^d$ generated by the vectors of coefficients of the remainders of the Weierstrass division of the elements of $M$ by $g(x)$.\\
If $f(x)\in R_n$ then the remainder $r$ of the  division of $f$ by $g$  has height less than $C_1\cdot(\h(f)+1)$ for some $C_1>0$ depending only on $g(x)$ and $\Deg(f)$ (by Theorem \ref{division_sep} - moreover $C_1$ is polynomial in $\Deg(f)$ when $\cha(\k)=0$). We remark that $f$ and $r$ have the same image in $M$. If $r=r_0+r_1x_n+\cdots+r_{d-1}x_n^{d-1}$, with $r_i\in R_{n-1}$ for all $i$, then  $(r_0,r_1,\cdots,r_{d-1})$ has height  less that $C_1\cdot(\h(f)+1)$ again by Theorem \ref{division_sep}. Moreover $\ord_M(f)=\ord_M(r)$. Since $x_n$ is integral over $R_{n-1}$, there exists a constant $a>0$ such that $x_n^a\in (x')$, with $x'=(x_1,\cdots,x_{n-1})$. Thus $(x)^{ac}\subset (x')^c$ for any integer $c$. So  we have: \\
$$\ord_{R_{n-1}^{d}/N'}(r)=\sup\{c\in\N\ / \ r\in(x')^cR_{n-1}^{d}/N'\}\geq\frac{1}{a+1}\ord_{M}(r).$$
By the induction hypothesis on $n$ there exists  $C>0$ such that 
$$\ord_{R_{n-1}^{d}/N'}(r)\leq C\cdot\h(r)\ \ \ \ \forall r\in R^{d}_{n-1}.$$ Thus we have
$$\ord_M(f)=\ord_M(r)\leq (a+1)\ord_{R_{n-1}^{d}/N'}(r)\leq (a+1)C\,\h(r)\leq(a+1) CC_1\,(\h(f)+1).$$
If $\cha(\k)=0$ and $C$ is assumed to depend polynomially on $\Deg(r)$ by the induction hypothesis, then $(a+1)CC_1$ depends polynomially on $\Deg(f)$ by Theorem \ref{division_sep}.\\
\\
\noindent - (3) Assume that $s\geq 2$ and $f_s$ is in the ideal of $R_n$ generated by $l_{1,s}$, $\cdots$, $l_{\kappa,s}$. \\
Then we can write
$$f_s=a_1l_{1,s}+\cdots+a_{\kappa} l_{\kappa,s}$$
where the $a_i$ are algebraic power series with $\h(a_i)\leq C_2\cdot(\h(f_s)+1)$ for all $i$ and  $C_2>0$  depends only on the $l_{i,s}$ and $\Deg(f_s)$ (by Theorem \ref{IMP_alg}). Moreover, when $\cha(\k)=0$, $C_2$ depends polynomially on $\Deg(f_s)\leq \Deg(f)$ by Remark \ref{poly}. Let us set
$$f':=f-\sum_{i=1}^{\kappa}a_iL_i(e).$$
We set $N'=N\cap (R_n^{s-1}\times\{0\})$. We denote by $M'$ the sub-module of $M$ equal to 
$$\frac{R_n^{s-1}\times\{0\}}{N'}.$$
By Artin-Rees Lemma there exists a constant $c_0>0$ such that

$$(x)^{c+c_0}M\cap M'\subset (x)^cM' \ \ \forall c\in\N.$$
Hence we have
$$\ord_M(f)=\ord_M(f')\leq \ord_{M'}(f')+c_0.$$
By the induction hypothesis on $s$, there exists $C'>0$ depending on $\Deg(f')$ (thus on $\Deg(f)$ by Theorem \ref{IMP_alg}) such that 
$$\ord_{M'}(f')\leq C'\cdot\h(f').$$ 
If $\cha(\k)=0$ and we assumed that $C'$ depends polynomially on $\Deg(f')$ by the induction hypothesis, then $C'$ depends polynomially on $\Deg(f)$ by Remark \ref{poly}. Hence
$$\ord_M(f)\leq \ord_{M'}(f')+c_0\leq C'\cdot\h(f')+c_0\leq (C'+c_0)\h(f')$$
and $C'+c_0$ depends polynomially on $\Deg(f)$ in characteristic zero.
\\
\\
- (4) Assume that $s\geq 2$ and $f_s$ is not in the ideal of $R_n$ generated by $l_{1,s}$, $\cdots$, $l_{\kappa,s}$. \\
Then by the case $s=1$, there exists  $C>0$ depending only on the $l_{i,s}$ and  $\Deg(f_s)$ such that 
$$\ord_{R_n}(f_s+a_1l_{1,s}+\cdots+a_{\kappa}l_{\kappa,s})\leq C\cdot\h(f_s)$$
 for every $a_{i}\in R_n$. Moreover $C$ depends polynomially on $\Deg(f_s)\leq \Deg(f)$ when $\cha(\k)=0$. 
 Let us remark that for every $f\in R_n^s$ we have
 $$\ord_M(f)=\sup\{k\ /\ f\in (x)^kM\}=\sup\{k\ / \ f\in (x)^kR_n^s \text{ modulo }N\}$$
 $$=\sup\{k\ / \ \exists a_1,\cdots, a_{\kappa}\in R_n,\ f+a_1L_1(e)+\cdots+a_{\kappa}L_{\kappa}(e)\in (x)^k R_n^s\}$$
 $$=\sup_{a_1,\cdots,a_{\kappa}\in R_n}\left\{\ord_{R_n^s}(f+a_1L_1(e)+\cdots+a_{\kappa}L_{\kappa}(e))\right\}.$$
 Thus 
$$\ord_M(f)\leq \sup_{a_1,\cdots,a_{\kappa}\in R_n}\left\{\ord_{R_n}(f_s+a_1l_{1,s}+\cdots+a_{\kappa}l_{\kappa,s})\right\}\leq C\cdot\h(f_s)\leq C\cdot\h(f)$$
since 
$$\ord_{R_n^s}(g)=\min_{i=1,\cdots,s}\{\ord_{R_n}(g_i)\}\leq \ord_{R_n}(g_s)$$
for every $g=(g_1,\cdots,g_s)=g_1e_1+\cdots+g_se_s\in R_n^s$.


  
  \section{Proof of Theorem \ref{main2}}\label{proof_main2}

Let $I$ be an ideal of $\wdh{R}_n$.
    We set $J:=I\cap \k[x]$.
  We have the following lemma:
  
  \begin{lemma}\label{height_ideal}
  We have $\het(I)\geq\het(J)$ and if
 $I$ is generated by algebraic power series then $\het(I)=\het(J)$. \\
 On the other hand  if $I$ is the intersection of a finite number of ideals which are powers of prime ideals of the same heights, i.e.
 $$I=P_1^{n_1}\cap\cdots\cap P_l^{n_l} \text{ for some primes } P_i \text{ with } \het(P_i)=\het(P_j)\ \  \forall i, j,$$
then the equality $\het(I)=\het(J)$ implies that $I$ is generated by algebraic power series.
  \end{lemma}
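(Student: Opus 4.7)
The plan hinges on the tower
\[
\k[x] \hookrightarrow \k[x]_{(x)} \hookrightarrow R_n \hookrightarrow \wdh R_n,
\]
all arrows being flat, with the last two moreover being geometrically regular: $\k[x]_{(x)}\to R_n$ is the Henselization (ind-\'etale) and $R_n\to \wdh R_n$ is the completion of an excellent local ring. The basic fact I would invoke throughout is that along such a regular morphism $A\to B$, a minimal prime $\mfk{q}$ of an extended ideal $\mathfrak{a}B$ contracts to a minimal prime of $\mathfrak{a}$ and satisfies $\het(\mfk{q})=\het(\mfk{q}\cap A)$, because the fibre $B\otimes_A\kappa(\mfk{p})$ is reduced and its localisation at $\mfk{q}$ is therefore a field.

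For $\het(I)\geq\het(J)$, I would pick a minimal prime $\mfk{q}$ of $I$ of minimum height, set $\mfk{P}':=\mfk{q}\cap\k[x]$, apply going-down along the flat morphism $\k[x]\to\wdh R_n$ to get $\het(\mfk{q})\geq\het(\mfk{P}')$, and use $\mfk{P}'\supseteq J$ to conclude $\het(\mfk{P}')\geq\het(J)$. If moreover $I$ is generated by algebraic power series, I would write $I=I'\wdh R_n$ with $I'\subseteq R_n$; by faithful flatness $J=I'\cap\k[x]$, and the chain
\[
\het(I)=\het(I')=\het(I'\cap\k[x]_{(x)})=\het(I'\cap\k[x])=\het(J)
\]
is obtained by applying the principle above to $R_n\to\wdh R_n$ and $\k[x]_{(x)}\to R_n$. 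The remaining equality $\het(I'\cap\k[x]_{(x)})=\het(I'\cap\k[x])$ reduces to showing that every minimal prime of $I'\cap\k[x]$ lies in $(x)$; a minimal prime outside $(x)$ would contain a unit of $R_n$, so the associated primary component would become the unit ideal under extension to $R_n$, producing an element of the intersection of the remaining components that lies in $I'\cap\k[x]$ but falls outside the given primary component, contradicting minimality of the decomposition.

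The converse, under the hypothesis $I=\bigcap_i P_i^{n_i}$ with all $\het(P_i)=h$ and $\het(I)=\het(J)=h$, is the heart of the lemma. Setting $\mfk{P}_i:=P_i\cap\k[x]$ and $\mfk{p}_i:=P_i\cap R_n$, the identity $\sqrt{J}=\sqrt{I}\cap\k[x]=\bigcap_i\mfk{P}_i$ combined with the hypothesis forces every $\mfk{P}_i$ to be a minimal prime of $J$ of height $h$, whence $\het(\mfk{p}_i)=h$ by sandwiching $\het(\mfk{P}_i)\leq\het(\mfk{p}_i)\leq\het(P_i)$. The key step is then that $\mfk{p}_i\wdh R_n$ is prime in $\wdh R_n$: granted this, it is a height-$h$ prime contained in the height-$h$ prime $P_i$, so $P_i=\mfk{p}_i\wdh R_n$, and flatness gives
\[
I=\bigcap_i P_i^{n_i}=\bigcap_i (\mfk{p}_i\wdh R_n)^{n_i}=\Bigl(\bigcap_i\mfk{p}_i^{n_i}\Bigr)\wdh R_n,
\]
so $I$ is extended from the ideal $\bigcap_i\mfk{p}_i^{n_i}\subseteq R_n$, hence generated by algebraic power series. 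The principal obstacle is the primality of $\mfk{p}_i\wdh R_n$, equivalently the analytic irreducibility of the Henselian excellent local domain $R_n/\mfk{p}_i$: its normalisation is a local ring (a finite algebra over a Henselian Nagata ring, sitting inside a field, hence a single local factor) and is an excellent normal local domain, whose completion is then a normal local ring and in particular a domain; the injection $\widehat{R_n/\mfk{p}_i}\hookrightarrow$ that completion makes the former a domain too. This is the step that genuinely uses the special nature of the algebraic power series ring rather than formal flatness, and the equal-height hypothesis on the $P_i$ is exactly what forces $P_i=\mfk{p}_i\wdh R_n$ once primality is known.
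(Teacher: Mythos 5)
Your proof is correct and takes a genuinely different route from the paper's. For $\het(I)\geq\het(J)$ you use going-down along $\k[x]\to\wdh R_n$ directly, whereas the paper extends $J$ up through $\k[x]_{(x)}$ and uses faithful flatness of $\k[x]_{(x)}\to\wdh R_n$; both are fine, though your running ``principle'' is stated only for minimal primes of \emph{extended} ideals, and when you apply it to $I'$ across $\k[x]_{(x)}\to R_n$ (which is not extended) you are implicitly using the stronger and correct fact that the Henselization has zero-dimensional fibres, so \emph{every} prime of $R_n$ has the same height as its contraction — worth saying explicitly. For the forward implication the paper descends $I$ to an \'etale neighbourhood $A$ of $\k[x]_{(x)}$ and compares dimensions of $\k[x]_{(x)}/J$ and $A/(a')$ via a finite injective map; your version pushes the height through the tower $\k[x]\hookrightarrow\k[x]_{(x)}\hookrightarrow R_n\hookrightarrow\wdh R_n$ instead, which is equally valid. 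The real divergence is in the converse. The paper first treats the case of a single prime $I$: it shows $\k\langle x\rangle/J\k\langle x\rangle$ is reduced, writes $J\k\langle x\rangle=P'_1\cap\cdots\cap P'_r$, and cites Lemma~5.1 of [KPPRM] that each $P'_i\wdh R_n$ is prime, then identifies $I$ with one of these; it then reduces the general $\bigcap P_i^{n_i}$ case to $\sqrt I$ and then to the prime case. You bypass all that by working directly with the contractions $\mfk p_i=P_i\cap R_n$, showing $\het(\mfk p_i)=h$ by a height sandwich, and proving $\mfk p_i\wdh R_n$ is prime via analytic irreducibility of $R_n/\mfk p_i$ — which you establish from scratch through the normalisation (Henselian $\Rightarrow$ the normalisation is local, excellent $\Rightarrow$ it is module-finite and its completion is normal, hence a domain, and $\widehat{R_n/\mfk p_i}$ injects into it). This is in effect a self-contained proof of the [KPPRM] lemma the paper quotes, and it lets you treat all the $P_i^{n_i}$ at once (using that flat extension commutes with finite intersections) rather than reducing to the radical and then to a single prime. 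Your route is slightly heavier on commutative-algebra machinery (excellence, Nagata, analytic normality) but is more self-contained and structurally more direct; the paper's is shorter modulo its external citation.
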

  
\begin{proof}
We have 
$$I=\k\lb x \rb \Longleftrightarrow J=\k[x].$$ 
Thus we may assume that $I$ and $J$ are proper ideals. In this case $J\subset (x)\k[x]$ so $\het(J)=\het(J\k[x]_{(x)})$. Since the morphism $\k[x]_{(x)}\lgw \k\lb x\rb$ is faithfully flat  $\het(J\k[x]_{(x)})=\het(J\k\lb x\rb)$. Then $\het(J)\leq \het(I)$ because $J\k\lb x\rb\subset I$.\\
\\
Let us assume that $I$ is generated by algebraic power series. By Noetherianity there exists a finite number of algebraic power series $a_1$,..., $a_r\in\k\lag x\rag$ that generate $I$.  Since $\k\lag x\rag$ is the Henselization of $\k[x]_{(x)}$, there exists an \'etale map $\k[x]_{(x)}\lgw A$ where $A$ is a local ring such that $\k[x]_{(x)}\lgw \k\lag x\rag$ factors through $\k[x]_{(x)}\lgw A$ and $a_1$,..., $a_r$ are images of elements $a'_1$,..., $a'_r\in A$. By faithful  flatness of $A\lgw \k\lag x\rag$ we have $\het((a'_1,\cdots,a'_r)\cdot A)=\het(I)$. Since the morphism $\k[x]_{(x)}/J\lgw A/(a'_1,\cdots,a'_r)$ is a localization of a  finite injective morphism, we get $\dim(\k[x]_{(x)}/J)=\dim(A/(a'_1,\cdots,a'_r)\cdot A)$, so $\het(J)=\het((a'_1,\cdots,a'_r)\cdot A)=\het(I)$.\\
\\
Now we assume that $\het(I)=\het(J)$.\\
First we consider the case where $I$ is a prime ideal. Then $J$ is also a prime ideal. If $\het(J)=\het(I)$, then $\het(J\k\lb x\rb)=\het(I)$ and since $J\k\lb x\rb\subset I$, then $I$ is a prime associated to $J\k\lb x\rb$. Since $J$ is radical, then $J\k\lag x\rag$ is also a radical ideal: indeed since $\k[x]/J$ is reduced, then its completion $\k\lb x\rb/J\k\lb x\rb$ is also reduced (see (1) p. 180 of \cite{HS}) so $\k\lag x\rag/J\k\lag x\rag$ is reduced. If $J\k\lag x\rag=P'_1\cap\cdots \cap P'_r$ is a prime decomposition of $J\k\lag x\rag$, then the ideals $P'_i\k\lb x\rb$ are prime ideals by Lemma 5.1 \cite{KPPRM} so 
$$J\k\lb x\rb=P'_1\k\lb x\rb\cap \cdots\cap P'_r\k\lb x\rb$$
is a prime decomposition of $J\k\lb x\rb$ and $I$ is equal to one of the $P'_i\k\lb x\rb$, let us say $P'_1\k\lb x\rb=I$. In particular $I$ is generated by algebraic power series. \\
Now let us assume that $I=P_1\cap \cdots\cap P_l$ where the $P_i$ are prime ideals of the same height. Let $J_i:=P_i\cap\k[x]$. Then $J=J_1\cap \cdots\cap J_l$. Since $\het(J)\leq \het(J_i)\leq \het(P_i)=\het(I)=\het(J)$ for every $i$ we have  $\het(J_i)=\het(P_i)$ for all $i$, thus $P_i$ is generated by algebraic power series by the previous case, thus $I$ is also generated by algebraic power series.\\
Finally let us assume that  $I=P_1^{n_1}\cap\cdots\cap P_l^{n_l}$ where the $P_i$ are  prime ideals of  the same height and the $n_i$ are positive integers. Let us set $J_i=P_i\cap \k[x]$. Then $P_i^{n_i}\cap \k[x]$ is an ideal containing $J_i^{n_i}$ whose radical is $J_i$. So $\sqrt{J}=J_1\cap\cdots\cap J_l$. Since $\het(\sqrt{J})=\het(J)=\het(I)=\het(\sqrt{I})$ then $\sqrt{I}$ is generated by algebraic power series by the previous case. Thus the associated primes ideals of $\sqrt{I}$, i.e. the $P_i$, are generated by algebraic power series. Hence the $P_i^{n_i}$ are generated by algebraic power series and $I$ also.

\end{proof}

\noindent
From now on we assume that 
 $$I=P_1^{n_1}\cap\cdots\cap P_l^{n_l} \text{ for some primes } P_i \text{ with } \het(P_i)=\het(P_j)\ \  \forall i, j$$
 and $R$ denotes the ring $\k\lb x\rb/I$.\\
  Let $\k[x]_d$ be the set of polynomials of degree $\leq d$ and  $J_d:=J\cap \k[x]_d$ for every integer $d$. 
  We set  for every integer $d\geq 0$:
  $$\Phi(d):=\dim_{\k}\left(\frac{\k[x]_d}{J_d}\right).$$
  The function $d\lgm\Phi(d)$ coincides with a polynomial function of degree $p:=\dim\left(\frac{\k[x]}{J}\right)=n-$ht$(J)$ for $d$ large enough. Then we define for every integer $d\geq 0$:
  $$\Psi(d):=\dim_{\k}\left(\frac{R}{(x)^d}\right).$$
  The function $d\lgm \Psi(d)$ coincides with a polynomial function of degree $q:=\dim(R)=n-$ht$(I)$ for $d$ large enough.  So $\Psi(d^p)$ and $\Phi(d^q)$ are polynomial functions of same degree (equal to $pq$) for $d$ large enough. By choosing $a>0$ large enough the leading coefficient of $\Phi(ad^q)$ will be strictly greater than the leading coefficient of $\Psi(d^p)$. Thus   for such  a constant $a>0$ we have
  $$\Psi(d^p)<\Phi(ad^q) \ \ \ \forall d>>0.$$
  This means that the canonical $\k$-linear map 
  $$\frac{\k[x]_{ad^q}}{J_{ad^q}}\lgw\frac{R}{(x)^{d^p}}$$
  is not injective for $d$ large enough. For every $d$ large enough let $p_d$ be a non-zero element of the kernel of this map. By assumption there exists a constant $C$ such that 
  $$\ord_R(p_d)\leq C\cdot\deg(p_d)\leq Cad^q\ \ \ \forall d.$$
  Since $p_d$ is in the kernel of the previous $\k$-linear map, we have $\ord_R(p_d)\geq d^p$, thus
  $$Cad^q\geq d^p.$$
  But such an inequality is satisfied (for some constant $a>0$) if and only if $q\geq p$, i.e. if $\dim(R)\geq \dim\left(\frac{\k[x]}{J}\right)$. This last inequality is equivalent to ht$(I)\leq $ ht$(J)$. Thus, by Lemma \ref{height_lemma}, such an inequality is satisfied if and only ht$(I)= $ ht$(J)$, i.e. if and only if $I$ is generated by algebraic power series. This proves  Theorem \ref{main2}.
  
  
  \section{An example}\label{contex}
  Here we show  through an example that Lemma \ref{height_ideal} and Theorem \ref{main2} are not true in general.\\
  \\
  Let $\k=\C$ and $n=3$. For simplicity we  denote the variables $x_1$, $x_2$, $x_3$ by $x$, $y$, $z$. We set 
  $$f(z):=-\log(1-z)=\sum_{k\geq 1}\frac{1}{k}z^k.$$ Let $Q=(x,y)^2=(x^2,y^2,xy)$ and $Q'=Q+(x+f(z)y)$ be ideals of $\C\lb x,y,z\rb$. Then 
  $$\sqrt{Q}=\sqrt{Q'}=(x,y).$$\\
  (1) \emph{$Q'$ is not generated by algebraic power series but $\het(Q'\cap \C[x,y,z])=\het(Q')=2$:}\\
\\
We have $(x,y)^2=Q\subsetneq Q'$ since $x+f(z)y\notin (x,y)^2$, but there is no algebraic power series $g(x,y,z)$ such that
$$x+f(z)y=g(x,y,z) \text{ modulo } Q.$$
Indeed, if it were the case, by replacing $x^2$, $y^2$ and $xy$ by zero in the expansion of $g$, we would find an algebraic power series $h(z)$ such that
$$x+f(z)y=x+h(z)y$$
which is not possible since $f(z)$ is transcendental. So $Q'\cap\C[ x,y,z]=(x,y)^2$ and $Q'$ is not generated by algebraic power series. Since $(x,y)^2\subset Q'\subset (x,y)$, we have $\het(Q')=2=\het((x,y)^2)=\het(Q'\cap\C[x,y,z])$. This proves the claim.\\
\\
(2) \emph{$A'=\C\lb x,y,z\rb/Q'$ satisfies the local zero estimate \eqref{lze2}  of Corollary \ref{izu}: 
\begin{equation}\label{lze_ex}\ord_{A'}(p)\leq 2\deg(p)\ \ \ \ \forall p\in\C[x,y,z]\backslash Q'.\end{equation}}\\
\\
  Since $Q\subset Q'\subset (x,y)$ we have the canonical quotient morphisms:
  $$A:=\frac{\C\lb x,y,z\rb}{Q}\lgw A':=\frac{\C\lb x,y,z\rb}{Q'}\lgw B:=\frac{\C\lb x,y,z\rb}{(x,y)}.$$
\\
We consider two cases: \\
\\
(a) if $p$ is a polynomial of $\k[x,y,z]$, $p\notin (x,y)$, then we have $\ord_{A'}(p)\leq \ord_B(p)$. But we claim that $\ord_B(p)\leq \deg(p)$. Indeed, let $f\in\C\lb x,y,z\rb$ be equal to $p$ modulo $(x,y)$. Since $p\notin (x,y)$, $p$ has a nonzero monomial of the form $az^k$ for some $a\in\C$ and $k\leq \deg(p)$. Since $f-p\in(x,y)$, then $f$ has also a non zero monomial $az^k$. So $\ord_{\C\lb x,y,z\rb}(f)\leq k$.\\
Thus we have
    \begin{equation}\label{111}\ord_{A'}(p)\leq \deg(p)\ \ \ \forall p\in\C[x,y,z]\backslash (x,y).\end{equation}\\
(b)   Now let $p$ be a polynomial with $p\in (x,y)$ but $p\notin Q'$. In particular $p\neq 0$. Then there exists a unique polynomial $p'$ of the form
  $$p'=a(z)x+b(z)y$$
   where $a(z)$, $b(z)\in\k[z]$, $\deg(p')\leq \deg(p)$ and 
   $$p'\equiv p \text{ modulo }Q.$$
   Let $n$ be an integer such that $n+1\leq \ord_{A'}(p)=\ord_{A'}(p')$. This means that 
   $$p'\in(x,y,z)^{n+1}+(x,y)^2+(x+f(z)y),$$ thus 
   $$p'=\e+\eta+c\cdot(x+f(z)y)$$
   for some $\e\in(x,y,z)^{n+1}$, $\eta\in (x,y)^2$ and $c\in \C\lb x,y,z\rb$, . Since $p'=a(z)x+b(z)y$ we obtain
   \begin{equation}\label{penible}\begin{split}
   (a(z)-c(0,0,z))x+(b(z)-c(0,0,z)&f(z))y\\
   =\eta+(c-c&(0,0,z))(x+f(z)y)+\e\end{split}\end{equation}
   But $\eta':=\eta+(c-c(0,0,z))(x+f(z)y)\in (x,y)^2$.
   Moreover  $\e$ can be written as 
   $$\e=\e'(x,y,z)+\e_x(z)x+\e_y(z)y+\e_1(z)$$
    where $\e'(x,y,z)\in(x,y)^2$, $\e_x(z)$, $\e_y(z)\in(z)^{n}\C\lb z\rb$ and $\e_1(z)\in(z)^{n+1}\C\lb z\rb$.
   Thus \eqref{penible} shows that 
   $$   a(z)-c(0,0,z)=\e_x,\ \ \ \ b(z)-c(0,0,z)f(z)=\e_y(z),$$
   $$\eta'+\e'=0,\ \ \ \ \e_1(z)=0.$$
  This proves that if $\ord_{A'}(p)\geq n+1$  then there exists 
    $c(z)\in\C\lb z\rb$
such that
  $$\ord_z(a(z)-c(z))\geq n\text{ and } \ord_z(b(z)-c(z)f(z))\geq n.$$
Let us write 
$$a(z)=\sum_ka_kz^k,\ \ b(z)=\sum_{k}b_kz^k\text{ and } c(z)=\sum_{k}c_kz^k.$$
If $\deg(p)\leq d$ for some integer $d$, then $\deg(a)$, $\deg(b) \leq d-1$ thus
$$a_k=b_k=0\ \ \ \forall k\geq d.$$
 Since $\ord_z(a(z)-c(z))\geq n$ then 
 $$a_k=c_k\ \ \ \forall k<n.$$
In particular, if $d<n$, we have
$$b_{d}=\cdots=b_{n-1}=c_{d}=\cdots=c_{n-1}=0.$$
Since $\ord_z(b(z)-c(z)f(z))\geq n$ then
$$\left[\begin{array}{ccccc}  0 & 0 & 0 & 0 & 0\\
1 & 0 & 0 & 0 & 0\\
\frac{1}{2} & 1 &0 & 0 & 0\\
\vdots & \vdots & \ddots & \ddots & \vdots\\
\frac{1}{n-1} & \frac{1}{n-2} & \cdots & 1 & 0
  \end{array}\right]\left[\begin{array}{c}c_0\\ c_1 \\ c_2 \\ \vdots \\ c_{n-1}\end{array}\right]= \left[\begin{array}{c}b_0\\ b_1 \\ b_2 \\ \vdots \\ b_{n-1}\end{array}\right].$$
  There are two cases to be considered: either $\ord_{A'}(p)\leq \deg(p)$ and the local zero estimate \eqref{lze_ex} is satisfied, either $\ord_{A'}(p)>\deg(d)$. In the latter case we can choose $n\geq d$.
  In particular 
  \begin{equation}\label{hankel}\left[\begin{array}{ccccc}  
\frac{1}{d} & \frac{1}{d-1} &  \frac{1}{d-2} & \cdots &  1\\
\frac{1}{d+1} & \frac{1}{d} & \cdots & \cdots & \frac{1}{2}\\
\vdots & \vdots & \ddots & \ddots & \vdots\\
\frac{1}{n-1} & \frac{1}{n-2} & \cdots & \cdots & \frac{1}{n-d}
  \end{array}\right]\left[\begin{array}{c}c_0\\ c_1  \\ \vdots \\ c_{d-1}\end{array}\right]= \left[\begin{array}{c}b_d\\ b_{d+1} \\ \vdots \\ b_{n-1}\end{array}\right]=\left[\begin{array}{c}0\\ 0 \\ \vdots \\ 0\end{array}\right].\end{equation}
  Let us assume that the local zero estimate \eqref{lze_ex} is not satisfied, i.e. $\ord_{A'}(p)>2\deg(p)$. Then we can choose $n=2d$. But for $n=2d$ the matrix  
  $$\left[\begin{array}{cccc}  
\frac{1}{d} & \frac{1}{d-1}  & \cdots &  1\\
\frac{1}{d+1} & \frac{1}{d} & \cdots & \frac{1}{2}\\
\vdots & \vdots & \ddots  & \vdots\\
\frac{1}{2d-1} & \frac{1}{2d-2} & \cdots  & \frac{1}{d}
  \end{array}\right]$$
  is a Hilbert matrix and is not singular. This means that Equation \eqref{hankel} for $n=2d$ has no nontrivial solution, hence $c_0=\cdots=c_{n-1}=0$. This proves that $a_k=b_k=0$ for every $k$ which contradicts the assumption that $p\neq 0$. This proves that for every polynomial $p\in \C[x,y,z]$, $p\notin Q'$, we have
      \begin{equation}\label{222}\ord_{A'}(p)\leq 2\deg(p).\end{equation}
      

  \section{Grauert-Hironaka-Galligo Division of power series}\label{GHG}
  Let $\l$ be a linear form on $\R^n$ with positive coefficients. Let us consider the following order on $\N^n$:
for all $\a$, $\b\in\N^n$, we say that $\a\leq \b$ if 
$$(\l(\a),\a_1,\cdots,\a_n)\leq_{lex}(\l(\b),\b_1,\cdots,\b_n)$$
 where  $\leq_{lex}$ is the lexicographic order. This order induces an order on the set of monomials $x_1^{\a_1}\cdots x_n^{\a_n}$: we set $x^{\a}\leq x^{\b}$ if $\a\leq \b$. This order is called the \emph{monomial order induced by $\l$}.
If 
$$f:=\sum_{\a\in\N^n}f_{\a}x^{\a}\in\k\lb x\rb,$$ the \emph{initial exponent} of $f$ with respect to the previous order is
$$\exp(f):=\min\{\a\in\N^n\ / \ f_{\a}\neq 0\}=\inf \Supp(f)$$ 
where the \emph{support} of $f$ is  $\Supp(f):=\{\a\in\N^n\ / \ f_{\a}\neq 0\}$. The \emph{initial term} of $f$ is $f_{\exp(f)}x^{\exp(f)}$ and is denoted by $\ini(f)$. This is the smallest non-zero monomial in the  expansion of $f$ with respect to the previous order.\\
\\
Let $g_1$, $\cdots$, $g_s$ be elements of $\k\lb x\rb$.
Set
$$\Delta_1:=\exp(g_1)+\N^n\text{ and } \ \Delta_i=(\exp(g_i)+\N^n)\backslash \bigcup_{1\leq j <i}\Delta_j, \text{ for } 2\leq i\leq s.$$
Finally, set
$$\Delta_0:=\N^n\backslash\bigcup_{i=1}^s\Delta_i.$$
We have the following theorem:
\begin{theorem}\label{div}\cite{Gr}\cite{Hir}\cite{Ga}
Set $f\in\k\lb x\rb$. Then there exist some unique power series  $q_1$, $\cdots$, $q_s$, $r\in\k\lb x\rb$ such that
$$f=g_1q_1+\cdots+g_sq_s+r$$
$$\exp(g_i)+\Supp(q_i)\subset \D_i \text{ and } \Supp(r)\subset \Delta_0.$$
The power series $r$ is called the \emph{remainder} of the division of $f$ by $g_1$, $\cdots$, $g_s$ with respect to the given monomial order.\\
Moreover if $\k$ is a valued field and $f$, $g_1$, $\cdots$, $g_s$ are convergent power series, then the $q_i$ and $r$ are convergent power series.
\end{theorem}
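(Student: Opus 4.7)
The plan is to handle uniqueness and existence separately, exploiting two properties of the monomial order induced by $\lambda$: it is a well-ordering of $\N^n$ whose initial segments $\{\alpha : \alpha < \gamma\}$ are finite (since $\lambda$ has positive coefficients), and it is translation-invariant, i.e., $\beta > \beta'$ implies $\beta + \gamma > \beta' + \gamma$ for every $\gamma \in \N^n$.

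For uniqueness, suppose $g_1 q_1 + \cdots + g_s q_s + r = 0$ with the stated support conditions but with some summand nonzero. Let
\[
S := \{\exp(g_i) + \exp(q_i) : q_i \neq 0\} \cup \{\exp(r) : r \neq 0\},
\]
so that each element of $S$ lies in the unique $\Delta_i$ or $\Delta_0$ prescribed by the support condition on the corresponding summand, and set $\alpha_0 := \min S$. Translation invariance forces every summand other than the one whose initial exponent equals $\alpha_0$ to contribute zero coefficient at $x^{\alpha_0}$, while pairwise disjointness of $\Delta_0, \Delta_1, \ldots, \Delta_s$ means that exactly one summand realizes that initial exponent; thus the coefficient of $x^{\alpha_0}$ in the sum is non-zero, contradicting the hypothesis.

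For existence, write $g_i = c_i x^{\exp(g_i)} + \widetilde{g}_i$ with $c_i \in \k \setminus \{0\}$, and for $h = \sum_\alpha h_\alpha x^\alpha \in \k\lb x\rb$ set
\[
\widetilde{q}_i(h) := \frac{1}{c_i}\sum_{\alpha \in \Supp(h) \cap \Delta_i} h_\alpha \, x^{\alpha - \exp(g_i)}, \qquad \widetilde{r}(h) := \sum_{\alpha \in \Supp(h) \cap \Delta_0} h_\alpha \, x^\alpha,
\]
and introduce the error operator $T(h) := h - \sum_{i=1}^s g_i \widetilde{q}_i(h) - \widetilde{r}(h) = -\sum_{i=1}^s \widetilde{g}_i \widetilde{q}_i(h)$. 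Since every $\beta \in \Supp(\widetilde{g}_i)$ satisfies $\beta > \exp(g_i)$, translation invariance yields $\min \Supp(T(h)) > \min \Supp(h)$ whenever $T(h) \neq 0$; hence $\min \Supp(T^k(f))$ is strictly increasing in $k$. Finiteness of initial segments then implies that, for each $\gamma \in \N^n$, the coefficient of $x^\gamma$ in $T^k(f)$ vanishes for $k$ large, so the sums
\[
q_i := \sum_{k \geq 0} \widetilde{q}_i(T^k(f)), \qquad r := \sum_{k \geq 0} \widetilde{r}(T^k(f))
\]
are well-defined elements of $\k\lb x\rb$ satisfying the support conditions by construction. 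Telescoping the identity $T^k(f) = \sum_i g_i \widetilde{q}_i(T^k(f)) + \widetilde{r}(T^k(f)) + T^{k+1}(f)$ and using that $T^{k+1}(f) \to 0$ in the $(x)$-adic topology of $\k\lb x\rb$ then yields $f = \sum_i g_i q_i + r$.

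The main obstacle will be to upgrade this construction to the convergent category when $\k$ is a valued field and $f$, $g_i \in \k\{x\}$: the purely combinatorial iteration gives no direct control on the radius of convergence of the $q_i$ and $r$. The standard remedy is to introduce a family of weighted Banach norms $\|\cdot\|_\rho$ on $\k\{x\}$ adapted to $\lambda$, and to verify that for $\rho$ sufficiently small the error operator $T$ is a strict contraction (with constant controlled by $\|\widetilde{g}_i\|_\rho/|c_i|$), so that the Banach fixed-point theorem produces convergent $q_i$ and $r$; equivalently, one constructs by induction on $k$ an explicit convergent majorant for $T^k(f)$.
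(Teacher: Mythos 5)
Your proof is correct and follows essentially the same division-algorithm approach the paper sketches immediately after the theorem statement, the only cosmetic difference being that your error operator $T$ reduces all monomials of the current remainder in one pass, while the paper's algorithm reduces a single leading monomial per iteration; both give strictly increasing initial exponents and converge in the $(x)$-adic topology. For the convergent case you correctly identify the obstacle and the standard remedy (weighted Banach norms and the contraction principle, as in Grauert's and Galligo's original proofs), which matches the paper's treatment since it simply cites the references for that part.
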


The uniqueness of the division comes from the fact the $\D_i$'s are disjoint subsets of $\N^n$. The existence  of such decomposition in the formal case is proven through the division algorithm:\\
\\
Set $\a:=\exp(g)$. Then there exists an integer $i_1$ such that $\a\in \D_{i_1}$.\\
$\bullet$ If $i_1=0$, then set $r^{(1)}:=\ini(g)$ and $q_i^{(1)}:=0$ for any $i$.\\
$\bullet$  If $i_1\geq 1$, then set $r^{(1)}:=0$, $q_i^{(1)}:=0$ for $i\neq i_1$ and $q_{i_1}^{(1)}:=\frac{\ini(g)}{\ini(g_{i_1})}$.\\
 Finally set $\displaystyle g^{(1)}:=g-\sum_{i=1}^sg_iq_i^{(1)}-r^{(1)}$. Thus we have $\exp(g^{(1)})>\exp(g)$. Then we replace $g$ by $g^{(1)}$ and we repeat the preceding process.\\
  In this way we construct a sequence $(g^{(k)})_k$ of power series such that, for any $k\in\N$, $\exp(g^{(k+1)})>\exp(g^{(k)})$ and $\displaystyle g^{(k)}=g-\sum_{i=1}^sg_iq_i^{(k)}-r^{(k)}$ with
  $$\exp(g_i)+\Supp(q_i^{(k)})\subset \D_i \text{ and } \Supp(r^{(k)})\subset \Delta_0.$$
  At the limit $k\lgw\infty$ we obtain the desired decomposition.\\
 \\
  But in general if $f$ and the $g_i$ are algebraic power series (or even polynomials) then $r$ and the $q_i$ are not algebraic power series as shown by the following example:
    
  \begin{ex}[Kashiwara-Gabber's Example]\label{K-G}(\cite{Hir} p. 75)
 Let us perform the division of $xy$ by $$g:=(x-y^a)(y-x^a)=xy-x^{a+1}-y^{a+1}+x^{a}y^a$$ as formal power series in $\k\lb x,y\rb$ with an integer $a>1$ (here we choose a monomial order induced by the linear form $\l(\a_1,\a_2)=\a_1+\a_2$). By symmetry the remainder of this division can be written $r(x,y):=s(x)+s(y)$ where $s(x)$ is a formal power series. By  substituting $y$ by $x^a$ we get 
 
 $$s(x^a)+s(x)-x^{a+1}=0.$$
 This relation yields the expansion
 $$s(x)=\sum_{i=0}^{\infty}(-1)^ix^{(a+1)a^i}.$$
Thus the remainder of the division has Hadamard gaps and thus is not algebraic if $\cha(\k)=0$. Hadamard gaps are defined as follows: 
 \end{ex}
 
\begin{definition}
Let $x=(x_1,\cdots,x_n)$. A power series $f=\sum_kf_k$ where $f_k$ is a homogeneous polynomial of degree $k$ for every $k$ has Hadamard gaps if the indices $n_1<n_2<n_3<\cdots$ of all non-zero homogeneous terms of $f$ satisfy the condition $n_{k+1}>C n_k$ for all $k$ where $C>1$.\\
Over a characteristic zero field, a power series having Hadamard gaps cannot be algebraic.

\end{definition}

 \begin{ex}\label{ex2}
 Let $\k$ be a field of any characteristic.
 Set 
 $$f_n:=xy-\sum_{i=0}^n(-1)^ix^{(a+1)a^i}.$$ Then by the previous example
 $$f_n\equiv \sum_{i>n}(-1)^ix^{(a+1)a^i} \text{ mod. } (g).$$
 Thus $$\ord_{\k\lb x\rb/(g)}(f_n)\geq (a+1)a^{n+1}.$$
 Since $f_n$ is a polynomial of degree $(a+1)a^n$, this shows that the bound of Corollary \ref{izu} is optimal.
 \end{ex}
 
  
  \section{Generic Kashiwara-Gabber Example}\label{KG_gen}
  In this part we will investigate a particular case of division. Mainly we will consider the problem of dividing an algebraic power series $f(x,y)$ in two variables by an algebraic power series $g(x,y)$ whose initial term is equal to $xy$ with respect to a given monomial order as defined in the previous part. In this case the remainder of the division is the sum $R(x)+S(y)$ of one power series in $x$ and one power series in $y$.
  
    \begin{definition}\label{D-finite}
  Let $\k$ be a characteristic zero field and $x$ be a single variable.
  A \emph{$D$-finite power series} $f$ is a formal power series in $\k\lb x\rb$ satisfying a linear differential equation with polynomial coefficients, i.e. there exist $D\in\N$ and $a_{j}(x)\in\k[x]$ (not all equal to 0) for $0\leq j\leq D$  such that
  $$a_Df^{(D)}+a_{D-1}f^{(D-1)}+\cdots+a_0f=0.$$
  
  \end{definition}
  \noindent
  Let us mention that by \cite{St} any algebraic power series is $D$-finite.\\
  In Example \ref{K-G}, if $\cha(\k)=0$, the remainder is not $D$-finite  since $D$-finite power series have no Hadamard gaps (see \cite{St} or \cite{LR} for instance). We will show that the situation of Example \ref{K-G} is generic in some sense.\\
  \\
  Set 
$$g_{\und{a}}(x,y)=xy-\sum_{(i,j)\in E}a_{i,j}x^iy^j$$
where $\und{a}$ denotes the vector of entries $a_{i,j}\in \k$ for some field $\k$ and $E$ is a finite subset of $\N^2$ such that:
\begin{enumerate}
\item  $(0,0)$, $(0,1)$, $(1,0)$ and $(1,1)\notin E$,
\item $\{(2,0),(0,2)\}\not\subset E$.
\end{enumerate}
If $(0,2)\notin E$, let us choose the linear form $\l$ defined by $\l(e_1,e_2)=3e_1+2e_2$. Then for any $e=(e_1,e_2)\in E$ we have
$\l(e)=3e_1+2e_2>\l(1,1)=5$ since only three situations may occur:\\
- either $e_1\geq 2 $ so $\l(e)\geq 6$,\\
- either $e_1=1$ and $e_2\geq 2$ so $\l(e)\geq 7$, \\
- either $e_1=0$ and $e_2\geq 3$ so $\l(e)\geq 6$. \\
This means that there exists a monomial order induced by a linear form such that $xy$ is the initial term of $g_{\und{a}}(x,y)$. By symmetry this is also true if $(2,0)\notin E$. From now on we fix such monomial order and we perform the division of $xy$ by $g_{\und{a}}(x,y)$:
$$xy=g_{\und{a}}(x,y)Q_{\und{a}}(x,y)+R_{\und{a}}(x)+S_{\und{a}}(y).$$
\begin{lemma}\label{lemma_division}
Let $\k=\Q(\und a)$ where $\und a$ is the set of new undeterminates $a_{i,j}$ for $(i,j)\in E$. Then  $R_{\und a}(x)$ (resp. $S_{\und a}(y)$, $Q_{\und a}(x,y)$) is a power series with coefficients in $\Q[\und a]$.\\
 In particular if $\k$ is a characteristic zero field and $\und \a \in \k^{Card(E)}$ is a vector of elements $\a_{i,j}\in \k$ for every $(i,j)\in E$, then the coefficients of $R_{\und \a}(x)$ (resp. $S_{\und \a}(y)$, $Q_{\und \a}(x,y)$) are those of $R_{\und a}(x)$ (resp. $S_{\und a}(y)$, $Q_{\und a}(x,y)$) evaluated in $\und \a$.
\end{lemma}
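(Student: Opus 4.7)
The approach is to track the coefficient ring throughout the explicit division algorithm sketched after Theorem \ref{div}, exploiting the fact that $\ini(g_{\und a}) = xy$ has coefficient $1$. Because the algorithm only ever divides by $\ini(g_{\und a})$, it never inverts any polynomial in the $a_{i,j}$: every step is built from subtractions and multiplications in $\Z[\und a]$. So the claim that the output lies in $\Q[\und a]$ should emerge by induction through the algorithm, in fact with $\Z[\und a]$-coefficients.

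Concretely, I would set $g^{(0)} := xy$ and show inductively that at each stage $g^{(k)}$, $q^{(k)}_1$, and the partial remainder $r^{(k)}$ all have coefficients in $\Z[\und a]$. The base case is immediate: $\exp(g^{(0)}) = (1,1) \in \Delta_1$ gives $q^{(1)}_1 = 1$ and $g^{(1)} = xy - g_{\und a} = \sum_{(i,j)\in E} a_{i,j} x^i y^j$. At step $k{+}1$, let $\ini(g^{(k)}) = c\,x^\alpha$ with $c \in \Z[\und a]$. Either $\alpha \in \Delta_1$, in which case the increment to the quotient is $c\,x^{\alpha - (1,1)} \in \Z[\und a][x,y]$ and $g^{(k+1)} = g^{(k)} - c\,x^{\alpha-(1,1)}\,g_{\und a}$ remains in $\Z[\und a][[x,y]]$; or $\alpha \in \Delta_0$, in which case the monomial $c\,x^\alpha$ simply passes to the remainder. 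Because the monomial order is induced by a positive linear form $\lambda$, the sets $\{\alpha : \lambda(\alpha) \leq N\}$ are finite, so the strictly increasing sequence $(\exp(g^{(k)}))_k$ satisfies $\lambda(\exp(g^{(k)})) \to \infty$ and the algorithm converges in the $(x,y)$-adic topology. Passing to the limit, every coefficient of $Q_{\und a}(x,y)$, $R_{\und a}(x)$ and $S_{\und a}(y)$ lies in $\Z[\und a] \subset \Q[\und a]$.

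The specialization statement then follows from the uniqueness in Theorem \ref{div}. Given $\und\a \in \k^{\text{Card}(E)}$, the substitution $\phi : \Z[\und a] \to \k$, $a_{i,j} \mapsto \a_{i,j}$, extends coefficient-wise to formal power series. Applying $\phi$ to the identity $xy = g_{\und a}\,Q_{\und a}(x,y) + R_{\und a}(x) + S_{\und a}(y)$ yields $xy = g_{\und\a}\,\phi(Q_{\und a}) + \phi(R_{\und a})(x) + \phi(S_{\und a})(y)$. Since $\Supp(\phi(f)) \subset \Supp(f)$, the inclusions $\exp(g_{\und\a}) + \Supp(\phi(Q_{\und a})) \subset \Delta_1$ and $\Supp(\phi(R_{\und a})+\phi(S_{\und a})) \subset \Delta_0$ still hold, so uniqueness forces $Q_{\und\a} = \phi(Q_{\und a})$, $R_{\und\a} = \phi(R_{\und a})$ and $S_{\und\a} = \phi(S_{\und a})$. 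There is no serious obstacle here: the lemma is essentially the observation that when the divisor's leading coefficient is $1$ the division algorithm preserves the coefficient subring generated by the parameters, coupled with the standard convergence property of the order induced by a positive linear form.
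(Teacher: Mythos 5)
Your proof is correct and follows essentially the same route as the paper: the paper also observes that the leading coefficient of $g_{\und a}$ is $1$, so the division algorithm never inverts anything and all coefficients stay in $\Q[\und a]$, and then derives the specialization claim by applying the evaluation $a_{i,j}\mapsto \a_{i,j}$ to the division identity and invoking uniqueness. You merely spell out the induction and the convergence argument that the paper leaves implicit (and you note the sharper $\Z[\und a]$ bound), but the idea is identical.
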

\begin{proof}
Since the coefficient of the leading term $xy$ of $xy-\sum_{(i,j)\in E}a_{i,j}x^iy^j$ is equal to 1, we see directly from the division algorithm given in Section \ref{GHG} that the coefficients of $R_{\und a}(x)$, $S_{\und a}(y)$ and $Q_{\und a}(x,y)$ are in $\Q[\und a]$.\\
Then by evaluating the terms of the equality 
$$xy=g_{\und{a}}(x,y)Q_{\und{a}}(x,y)+R_{\und{a}}(x)+S_{\und{a}}(y)$$
in $\und a$ we necessarily obtain the equality
$$xy=g_{\und{\a}}(x,y)Q_{\und{\a}}(x,y)+R_{\und{\a}}(x)+S_{\und{\a}}(y)$$
by unicity of the division.
\end{proof}
For every $k\in\N\backslash\{0,1\}$ we set $$E_k=\{(0,k+1),(k+1,0),(k,k)\}.$$
We have the following result:

\begin{prop}\label{prop2}
Let $E$ be a finite set as before such that $E_k\subset E$ for some integer $k>1$. Let $(\a_{i,j})\in\C^{Card(E)}$ whose coordinates are algebraically independent over $\Q$. Then $R_{\und{\a}}(x)$ is not a $D$-finite power series. In particular this is not an algebraic power series.
\end{prop}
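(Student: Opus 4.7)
The plan is to argue by contradiction, using a linear-algebra reformulation of D-finiteness together with the algebraic independence hypothesis to deduce that, if $R_{\und{\alpha}}(x)$ were D-finite, then the specialization to the Kashiwara-Gabber values would yield a D-finite power series, contradicting the Hadamard gap property of Example \ref{K-G}.

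The core observation is a linear-algebra criterion for D-finiteness: for a power series $f(x)\in\C\lb x\rb$ and parameters $d,N\in\N$, the existence of a non-zero linear differential operator of order $\leq d$ with coefficients of degree $\leq N$ in $x$ annihilating $f$ is equivalent to the non-triviality of the kernel of an associated linear map on the $(d+1)(N+1)$-dimensional space of candidate operators, equivalently to the vanishing of all maximal minors of a matrix whose entries are specific scalar combinations of the coefficients of $f,f',\ldots,f^{(d)}$. When $f=R_{\und{a}}$, Lemma \ref{lemma_division} ensures these entries lie in $\Q[\und{a}]$, and so do the minors.

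Suppose now that $R_{\und{\alpha}}(x)$ is D-finite over $\C$, with some bounds $d,N$. Then all the above minors, viewed as polynomials in $\Q[\und{a}]$, vanish at $\und{\alpha}$. Since $\und{\alpha}$ is algebraically independent over $\Q$, each such minor must vanish identically in $\Q[\und{a}]$, hence vanishes at every specialization $\und{\alpha}_0\in\C^{\operatorname{Card}(E)}$. Consequently, $R_{\und{\alpha}_0}$ is D-finite over $\C$ for every $\und{\alpha}_0$. Specializing at the Kashiwara-Gabber values $\alpha^{\mathrm{KG}}_{k+1,0}=\alpha^{\mathrm{KG}}_{0,k+1}=1$, $\alpha^{\mathrm{KG}}_{k,k}=-1$, and $\alpha^{\mathrm{KG}}_{i,j}=0$ for $(i,j)\in E\setminus E_k$, Lemma \ref{lemma_division} identifies the specialized series with the Kashiwara-Gabber remainder $R_{\mathrm{KG}}(x)$. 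Thus $R_{\mathrm{KG}}$ would be D-finite, contradicting Example \ref{K-G} together with the fact recalled after Definition \ref{D-finite} that D-finite power series have no Hadamard gaps.

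The main subtlety lies in the linear-algebra reformulation itself: namely, the recognition that D-finiteness with bounded order and coefficient degree can be encoded purely as the vanishing of specific polynomials in $\Q[\und{a}]$. This is what allows the algebraic independence hypothesis to force identical vanishing, and thereby bypass the more delicate question of whether a given annihilating operator itself specializes to a non-zero operator at the Kashiwara-Gabber point.
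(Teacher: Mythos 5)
Your proof is correct, and it takes a genuinely different route from the paper's. The paper argues by induction on $\operatorname{Card}(E)$: in the base case $E=E_k$ it first reduces a hypothetical annihilating operator for $R_{\und\alpha}$ to one with globally coprime coefficients $P_i\in\Q[\und a][x]$, then observes that the variety $V$ where all $P_i$ vanish has codimension $\geq 2$, so it cannot contain the codimension-one surface $\{ab+c=0\}$; on that surface it explicitly recomputes the remainder $R_{a,b,-ab}(x)=\sum_i(-1)^i a^{(k^{i+1}-1)/(k-1)}b^{(k^i-1)/(k-1)}x^{(k+1)k^i}$ and exhibits Hadamard gaps, giving the contradiction; the inductive step then specializes one extra coordinate of $\und a$ to zero. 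Your argument replaces all of this by the observation that D-finiteness with prescribed bounds $(d,N)$ is precisely rank deficiency of the map $L\mapsto LR_{\und a}$ on the $(d+1)(N+1)$-dimensional operator space, hence the simultaneous vanishing of all maximal minors of a matrix whose entries lie in $\Q[\und a]$ (by Lemma \ref{lemma_division}); algebraic independence then forces these minors to vanish identically, so D-finiteness holds at every specialization, and specializing to the Kashiwara--Gabber point contradicts Example \ref{K-G}. This sidesteps both the induction and the coprimality/codimension argument (whose whole purpose is to ensure that some specialized operator remains nonzero — exactly the subtlety your minor-vanishing reformulation makes moot), at the modest cost of quoting Example \ref{K-G} rather than redoing the gap computation on a whole surface of parameters. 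Both proofs rely on Lemma \ref{lemma_division} to guarantee that the coefficients of $R_{\und a}$ live in $\Q[\und a]$ and commute with specialization, and your specialization $\alpha_{k+1,0}=\alpha_{0,k+1}=1$, $\alpha_{k,k}=-1$, other $\alpha_{i,j}=0$ does produce $g=(x-y^k)(y-x^k)$ with $xy$ still the initial term, so the reduction to Example \ref{K-G} is legitimate.
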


\begin{proof}
Let $N=Card(E)$. The proof is made by induction on $N$.\\
If $N=3$, then $E=E_k$. If $\a_{0,k+1},\a_{k+1,0},\a_{k,k}\in \C$ are algebraically independent over $\Q$ and $R(x):=R_{\und{\a}}(x)$ is a $D$-finite power series, then $R(x)$ satisfies a differential equation:

\begin{equation}\label{rel_diff}P_d(x)R^{(d)}(x)+\cdots+P_1(x)R(x)+P_0(x)=0\end{equation}
where $P_1(x)$, $\cdots$, $P_d(x)\in\C[x]$. If we expand this relation in terms of a $\Q(\und{\a})$-basis of the $\Q(\und{\a})$-vector space $\C$, we obtain at least one non-trivial relation of the same type where the $P_i(x)$ are in $\Q(\und{\a})[x]$. So we assume that $P_i(x)\in \Q(\und{\a})[x]$ for all $i$ and even $P_i(x)\in \Q[\und{\a}][x]$ for all $i$ by multiplying this relation by a common denominator of the coefficients of the $P_i$.
Since $\a_{k+1,0}$, $\a_{0,k+1}$ and $\a_{k,k}$ are algebraically independent over $\Q$, we are reduced to assume that $R_{a,b,c}(x)$ is $D$-finite over $\Q[a,b,c]$ where $a$, $b$ and $c$ are new indeterminates and
$R_{a,b,c}(x)$ is the $x$-depending part of the remainder of the division of $xy$ by $xy-ax^{k+1}-by^{k+1}-cx^ky^k$:
$$xy=\left(xy-ax^{k+1}-by^{k+1}-cx^ky^k\right)Q_{a,b,c}(x,y)+R_{a,b,c}(x)+S_{a,b,c}(y).$$
By Lemma \ref{lemma_division} $R_{a,b,c}(x)\in\Q[a,b,c]\lb x\rb$, $S_{a,b,c}(y)\in\Q[a,b,c]\lb y\rb$ and for every point $\a=(\a_{0,k+1},\a_{k+1,0},\a_{k,k})\in\C^3$, the power series $R_{\a}(x)$ and $S_{\a}(y)$ are equal to $R_{a,b,c}(x)$ and $S_{a,b,c}(y)$ evaluated in $\a$.\\
We may assume that the polynomials $P_i=P(a,b,c,x)$, coefficients of the Relation \eqref{rel_diff}, are globally coprime, otherwise we factor out their common divisor. For $0\leq i\leq d$, let $V_i$ be the subvariety of $\C^3$ which is the zero locus of the coefficients of $P_i(a,b,c,x)$ (seen as a polynomial in $x$). Let $V$ be the intersection of $V_0$, $\cdots$, $V_d$. Then if $(\und{\a})\notin V$, one of the $P_i(\und{\a},x)$ is non-zero and $R_{\und{\a}}(x)$ is $D$-finite over $\C[x]$. Since we have assumed that the $P_i(a,b,c,x)$ are globally coprime, $V$ is a finite union of algebraic curves and points, except if all but one $P_i$ are equal to 0. In this latter case, we have $P_d(a,b,c,x)R^{(d)}_{a,b,c}(x)=0$ which means that $R^{(d)}_{a,b,c}(x)=0$, thus we may replace $P_d$ by 1 and in this case $V=\emptyset$.  \\
\\
From now on we replace $c$ by $-ab$ and we have the relation:
\begin{equation}\label{*}xy=(x-by^k)(y-ax^k)Q_{a,b,-ab}(x,y)+R_{a,b,-ab}(x)+S_{a,b,-ab}(y).\end{equation}
By symmetry we have $R_{b,a,-ab}(y)=S_{a,b,-ab}(y)$.
If we replace $(x,y)$ by $(by,ax)$ in  \eqref{*} we get
$$abxy=ab(y-a^kx^k)(x-b^ky^k)Q_{a,b,-ab}(by,ax)+R_{a,b,-ab}(by)+S_{a,b,-ab}(ax),$$
thus we obtain
\begin{equation}\label{xy}\frac{1}{ab}R_{a,b,-ab}(by)=S_{a^k,b^k,-(ab)^k}(y).\end{equation}
By replacing $y$ by $ax^k$ in \eqref{*} we obtain:
$$ax^{k+1}=R_{a,b,-ab}(x)+S_{a,b,-ab}(ax^k)$$
so 
$$a^kx^{k+1}=R_{a^k,b^k,-a^kb^k}(x)+S_{a^k,b^k,-a^kb^k}(a^kx^k)$$
and
\begin{equation}\label{ind_r}a^kx^{k+1}=R_{a^k,b^k,-a^kb^k}(x)+\frac{1}{ab}R_{a,b,-ab}(a^kbx^k)\end{equation}
by \eqref{xy}.
By writing
$$R_{a,b,-ab}(x)=\sum_{l\geq 1} r_l(a,b)x^l$$
and plugging it in \eqref{ind_r}
we obtain 
$$ r_l(a,b)=0\ \ \ \forall l\leq k\  \text{ and } \ r_{k+1}(a^k,b^k)=a^k.$$
Moreover the coefficient of $x^{kl}$on both sides of \eqref{ind_r}, for every $l\geq 1$, is equal to
$$0=r_{kl}(a^k,b^k)+\frac{1}{ab}r_l(a,b)a^{kl}b^l$$
hence
$$r_{kl}(a^k,b^k)=-r_l(a,b)a^{kl-1}b^{l-1}.$$
Thus 
$$r_{k+1}(a,b)=a,\ r_{k(k+1)}(a,b)=-a^{k+1}b,\ r_{k^2(k+1)}(a,b)=a^{k(k+1)+1}b^{k+1}$$
and by induction 
$$r_{k^i(k+1)}(a,b)=(-1)^ia^{\sum_{j=0}^{i}k^j}b^{\sum_{j=0}^{i-1}k^j}\ \ \forall i\geq 1$$
$$=(-1)^ia^{\frac{k^{i+1}-1}{k-1}}b^{\frac{k^i-1}{k-1}}$$
and $r_l(a,b)=0$ if $\frac{l}{k+1}$ is not a power of $k$.
Thus we obtain
$$R_{a,b,-ab}(x)=\sum_{i=0}^{\infty}(-1)^{i}a^{\frac{k^{i+1}-1}{k-1}}b^{\frac{k^i-1}{k-1}}x^{(k+1)k^i}.$$
Exactly as in the example of Kashiwara-Gabber, this shows that $R_{\a,\b,-\a\b}(x)$ is not  $D$-finite if $\a\b\neq 0$.\\
Let $S\subset\C^3$ be the surface of equation $ab+c=0$.  In particular $S$ is not included in $V$ since the components of $V$ have dimension $\leq 1$. Then we see that for any $(\a,\b,\g)\in S\backslash \{ab=0\}$, $R_{\a,\b,\g}(x)$ is not $D$-finite. This contradicts the assumption that $R_{a,b,c}(x)$ is $D$-finite since we have shown that this would imply that $R_{\a,\b,\g}(x)$ is $D$-finite for every $(\a,\b,\g) \notin V$. Thus $R_{a,b,c}(x)$ is not $D$-finite.\\
\\
Let us assume that $N>3$ and that the proposition is proven for every set of cardinal $N-1$ containing $E_k$. Let us assume that $R_{\und{a}}(x)$ is $D$-finite, i.e. there exist polynomials $P_i\in\C(\und{a})[x]$, for $1\leq i\leq d$, such that
$$P_d(\und{a},x)R^{(d)}_{\und{a}}(x)+\cdots+P_1(\und{a},x)R_{\und{a}}(x)+P_0(\und{a},x)=0.$$
As we did before, we may assume that $P_i\in\Q[\und{a},x]$ for all $i$. By dividing the previous relation by a common divisor of the $P_i$, we may assume that the $P_i$ are globally coprime. For $0\leq i\leq d$ let $V_i$ denote the subvariety of $\C^N$ which is the zero locus of the coefficients of $P_i(x)$ (seen as a polynomial with coefficients in $\Q[\und{a}]$). Let $V$ be the intersection of $V_0$, $\cdots$, $V_d$. As in the previous case, since the $P_i$ are globally coprime, then codim$_{\C^N}(V)\geq 2$.\\
 Let $(i_0,j_0)\in E\backslash E_k$ and set $E'=E\backslash \{(i_0,j_0)\}$. Set $W=\{a_{i_0,j_0}=0\}$; we have codim$_{\C^N}(W)=1$. By the inductive assumption, $R_{\und{\a}}(x)$ is not $D$-finite for every $\und{\a}\in W$ such that tr.deg$_{\Q}\Q(\und{\a})=N-1$.
But if $\und{\a}\in W\backslash V$ and  tr.deg$_{\Q}\Q(\und{\a})=N-1$ (we may find such an $\und{\a}$ since codim$_{\C^N}(V)$ is strictly larger than codim$_{\C^N}(W)$), we see that $R_{\und{\a}}(x)$ is not $D$-finite which is a contradiction since $\und{\a}\notin V$. Thus $R_{\und{a}}(x)$ is not $D$-finite and the proposition is proven for sets $E$ of cardinal $N$.\\
\end{proof}

\begin{ex}
If $E$ does not contain any of the sets $E_k$ for $k>1$ then Proposition \ref{prop2} is no valid in general. For instance let us consider
$$E\subset \{(i,i+j),\  (i,j)\in\N^2,i>0, j>0\}.$$
We set $F=\{(i,j),\ (i,i+j)\in E\}$. Let us consider the Weierstrass division 
$$z=\left[z-\sum_{(i,j)\in F}a_{i,i+j}z^iy^j\right]Q(z,y)+R(y)$$
where $Q$ and $R$ are algebraic power series by Lafon Division Theorem. Then by replacing $z$ by $xy$ we obtain the division of $xy$ by $g_{\und{a}}(x,y)$:
$$xy=\left[xy-\sum_{(i,j)\in E}a_{i,j}x^iy^j\right]Q(xy,y)+R(y).$$
Thus $R_{\und{a}}(x)=R(x)$ is an algebraic power series.
\end{ex}

\begin{ex}
Let $h(x,y)$ and $d(x,y)$ be two algebraic power series over $\C$ and let us assume that the initial term of $d(x,y)$ is $xy$. The division of $h$ by $d$ yields the relation:
$$h(x,y)=d(x,y)Q(x,y)+R(x)+S(y).$$
By Newton-Puiseux Theorem there exist $n\in\N$ and $x(y)\in\C\langle y\rangle$, $y(x)\in\C\langle x\rangle$ such that
$$d(x(y),y^n)=d(x^n,y(x))=0.$$
Thus we obtain
$$h(x(y^{\frac{1}{n}}),y)=R(x(y^{\frac{1}{n}}))+S(y)$$
$$h(x^n,y(x))=R(x^n)+S(y(x)).$$
This yields the relation:
$$R(x^n)-R(x(y(x)^{\frac{1}{n}}))=h(x^n,y(x))-h(x(y(x)^{\frac{1}{n}})).$$
By replacing $x$ by $x^n$ we see that  there exist two algebraic power series $f(x)$ and $g(x)$ such that
$$R(x^{n^2})-R(g(x))=f(x).$$
But this is impossible if $R(x)=e^x$ by Schanuel's conjecture \cite{Ax}. This shows that in general $D$-finite power series (here $e^x$) which are not algebraic are not remainders of such a Weierstrass division.
\end{ex}


\section{Gap Theorem for remainders of division of algebraic power series}\label{Div_alg}
By a Theorem of Schmidt (see Hilfssatz 5 \cite{Sc}) an algebraic power series has no large gaps in its expansion. More precisely his result asserts that if an algebraic power series $f$ is written as $f=\sum_kf_{n(k)}$ where $f_{n(k)}$ is a non-zero homogeneous polynomial of degree $n(k)$ and $(n(k))_k$ is  increasing, then
$$\limsup_{k\lgw\infty}\frac{n(k+1)}{n(k)}<\infty.$$
 We prove here the same result for remainders of the Grauert-Hironaka-Galligo Division, i.e. it does not have more than Hadamard gaps.
%
%
%

  \begin{theorem}\label{remainder_division}
  Let $g_1$, $\cdots$, $g_s\in\k\langle x\rangle$ and let us fix a monomial order induced by a linear form as in Section \ref{GHG}. Then there exists a function $C:\N\lgw \R_{>0}$ such that the following holds:\\
  Let $f\in\k\lag x\rag$ be an algebraic power series  and let $r$ be the remainder of the division of $f$ by $g_1$, $\cdots$, $g_s$ with respect to the given monomial order. Let us write $r=\sum_{k=1}^{\infty}r_{n(k)}$ where $r_h$ is a homogeneous polynomial of degree $h$, $(n(k))_k$ is an increasing sequence of integers and $r_{n(k)}\neq 0$ for any $k\in\N$. Then
  $$n(k+1)\leq C(\Deg(f))\cdot n(k) \ \ \ \ \  \forall k\gg 0.$$
  In particular
  $$\limsup_{k\lgw\infty}\frac{n(k+1)}{n(k)}<\infty.$$

  \end{theorem}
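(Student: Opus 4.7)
The strategy is to reduce the gap estimate on $r$ to the local zero estimate of Theorem~\ref{main}, applied to the family of algebraic truncations $f - s_k$, where $s_k$ denotes the partial sum of the first $k$ non-zero homogeneous components of $r$. Set $I := (g_1,\ldots,g_s) \subset \k\lb x\rb$ and $M := \k\lb x\rb/I$. Because each $g_i \in \k\lag x\rag$, the ideal $I$ is generated by a $\k\lag x\rag$-sub-module of $\k\lag x\rag$, so Theorem~\ref{main} (specialised to the scalar case $s=1$, $N = I$) furnishes a function $C_0 : \N \lgw \R_{>0}$ such that $\ord_M(h) \leq C_0(\Deg(h)) \cdot \h(h)$ for every $h \in \k\lag x\rag \setminus I$.

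For each $k \geq 1$, put $s_k := r_{n(1)} + r_{n(2)} + \cdots + r_{n(k)} \in \k[x]$, a polynomial of total degree at most $n(k)$, so $\Deg(s_k) = 1$ and $\h(s_k) = \deg(s_k) \leq n(k)$. Subtracting $s_k$ from the division identity $f = \sum_i g_i q_i + r$ yields $f - s_k = \sum_i g_i q_i + (r - s_k)$, and since $\sum_i g_i q_i \in I$ this gives $f - s_k \equiv r - s_k \pmod{I}$. The tail $r - s_k = r_{n(k+1)} + r_{n(k+2)} + \cdots$ has order exactly $n(k+1)$ as a power series in $\k\lb x\rb$, and passing to the quotient $M$ cannot decrease the $\m$-adic order, so $\ord_M(f - s_k) \geq n(k+1)$.

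Next I control the complexity of the algebraic series $f - s_k$. Parts (i) and (iii) of Lemma~\ref{inequalities_height} give
$$\Deg(f - s_k) \leq \Deg(f), \qquad \h(f - s_k) \leq \h(f) + \Deg(f) \cdot n(k).$$
Assuming $f - s_k \notin I$, Theorem~\ref{main} applied to $f - s_k$ yields
$$n(k+1) \;\leq\; \ord_M(f - s_k) \;\leq\; C_0(\Deg(f)) \cdot \bigl(\h(f) + \Deg(f) \cdot n(k)\bigr).$$
For $k$ large enough that $n(k) \geq \h(f)$, this collapses into the desired linear bound $n(k+1) \leq C(\Deg(f)) \cdot n(k)$ with $C(\Deg(f)) := 2\Deg(f) \cdot C_0(\Deg(f))$.

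The hypothesis $f - s_k \notin I$ is the one technical point to secure. It is automatic when the $g_i$ form a standard basis of $I$ with respect to the chosen monomial order: in that case the leading exponent of $r - s_k$ lies in $\Delta_0$, which is disjoint from the initial ideal of $I$, so $r - s_k \neq 0$ forces $r - s_k \notin I$ and hence $f - s_k \notin I$ for every $k$ for which the sequence $(n(j))$ has not terminated. When the $g_i$ do not form a standard basis it is a priori possible that $r - s_k \in I$; handling this edge case requires either replacing $I$ by a carefully chosen algebraic sub-ideal (still generated by algebraic power series, so Theorem~\ref{main} still applies) in which $f - s_k$ remains non-zero while $\ord$ of $r - s_k$ is still bounded below by $n(k+1)$, or appealing to Schmidt's classical gap theorem directly in the situation where the obstruction forces $r$ itself to be algebraic. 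In either subcase one recovers the same linear control of $n(k+1)$ by $n(k)$.
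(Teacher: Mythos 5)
Your proof follows the same strategy as the paper's: form the truncations $f_k := f - s_k$ where $s_k = \sum_{i\le k} r_{n(i)}$, observe that $f_k \equiv r - s_k \pmod I$ gives the lower bound $\ord_M(f_k) \ge n(k+1)$, bound $\h(f_k) \le \h(f) + \Deg(f)\,n(k) \le 2\Deg(f)\,n(k)$ for large $k$ via Lemma~\ref{inequalities_height}(iii) together with $\Deg(f_k)=\Deg(f)$, and feed this into Theorem~\ref{main} to obtain $n(k+1) \le C'(\Deg f)\cdot 2\Deg(f)\cdot n(k)$. In this respect the argument is correct and matches the paper.

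Where you go beyond the paper is in flagging the hypothesis $f_k \notin I$ required to apply Theorem~\ref{main}. The paper applies the theorem silently, without this check. Your observation that the condition is automatic when the $g_i$ form a standard basis of $I$ is correct: then $\mathrm{in}(I)$ has exponent set $\bigcup_{i\ge 1}\Delta_i$, any non-zero $h \in I$ satisfies $\exp(h) \notin \Delta_0$, and since $r - s_k \ne 0$ has support in $\Delta_0$ it cannot lie in $I$, hence $f_k = f - s_k \notin I$ because $f - r \in I$. This is a genuine subtlety worth raising: for non-standard bases the division by $g_1,\ldots,g_s$ really can produce a non-zero remainder for an element of $I$ (take $g_1 = x$, $g_2 = x + y^2$ with $\lambda(\alpha)=\alpha_1+\alpha_2$; then $\Delta_2 = \emptyset$, $\Delta_0 = \{0\}\times\N$, and $y^2 \in I$ has remainder $y^2$).

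However, your two proposed remedies for the non-standard-basis case do not hold up. Passing to an algebraic sub-ideal $I' \subsetneq I$ \emph{decreases} the order function (since $\mathfrak{m}^c + I' \subset \mathfrak{m}^c + I$), so the lower bound $\ord_{\k\lb x\rb/I'}(f_k) \ge n(k+1)$ is no longer automatic; it would persist only if $f - r \in I'$, and $f - r = \sum g_i q_i$ involves the quotients $q_i$, which are not algebraic in general. The Schmidt alternative requires $r$ itself to be algebraic, and nothing in the situation $f - s_k \in I$ forces that. So the edge case remains unresolved in your write-up. To be fair, the paper's own proof leaves the same point implicit; a clean way to close it is either to add a standard-basis hypothesis on the $g_i$ to the statement of Theorem~\ref{remainder_division}, or to show directly that in the setting of the theorem (algebraic $f$, $r$ with infinitely many non-zero homogeneous components) one can pass to a standard basis of $I$ without changing the remainder $r$ — which is not obviously true and would need an argument.
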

  \vspace{0.2cm}
  \begin{proof}
 Let $I$ denote the ideal generated by $g_1$, $\cdots$, $g_s$.\\
    Let us set $f_k:=f-\sum_{i=1}^kr_{n(i)}$ for every $k\in\N$. The remainder of the division of $f$ by $g_1$, $\cdots$, $g_s$ is $\sum_{i=k+1}^{\infty}r_{n(i)}$, thus 
    $$\ord_{\k\lb x\rb/I}(f_k)=\ord_{\k\lb x\rb/I}\left(\sum_{i=k+1}^{\infty}r_{n(i)}\right)\geq n(k+1).$$
On the other hand by Lemma \ref{inequalities_height} \eqref{3}
    $$\h(f_k)\leq \h(f)+\Deg(f)\cdot n(k)$$
     thus $\h(f_k)\leq 2 \Deg(f)\cdot n(k)$ for $k$ large enough since $(n(k))_k$ is increasing. Hence, by Theorem \ref{main}, and since $\Deg(f_k)=\Deg(f)$, there exists $C'>0$ depending on $\Deg(f)$ such that
    $$\ord_{\k\lb x\rb/I}(f_k)\leq 2C'\cdot\Deg(f)\cdot n(k)$$
    for $k$ large enough. So the theorem is proven with $C=C'\cdot \Deg(f)$.
    
    \end{proof}
    
    \begin{rmk}
    Example \ref{ex2} shows that this result is sharp.
    \end{rmk}

 \end{document}